 \newcommand{\red}[1]{\textcolor{black}{#1}}
\numberwithin{equation}{section}
\newcommand\bR{{\mathbb{R}}}
\newcommand\bZ{{\mathbb Z}}
\newcommand\vecu{\mathbbm{u}}
\newcommand{\vecx}{\mathbbm{x}}
\newcommand{\vecv}{\mathbbm{v}}
\newcommand{\vecw}{\mathbbm{w}}
\newcommand\Hom{{\rm Hom}}
\newcommand\dev{{\bf dev}}
\newcommand\SI{{\mathbb{S}}}
\newcommand\clo{{\rm Cl}}
\newcommand\bdd{{\mathbf{d}}}
\newcommand\ra{\rightarrow}
\newcommand\che{\check}
\newcommand\emp{\emptyset}
\newcommand\eps{\epsilon}
\newcommand\Idd{{\rm I}}
\newcommand\Isom{{\mathbf{Isom}}}
\newcommand\SO{{\mathsf{SO}}}
\newcommand\PGL{{\mathsf{PGL}}}
\newcommand\Ss{{\mathbb{S}}}
\newcommand\diam{{\mathrm{diam}  } }
\newcommand{\deltxt}[1]{}
\newcommand{\KFAR}[1]{}
\newcommand\Lspace{\mathsf E}
\newcommand\Uu{\mathsf U}
\newcommand\Ker{{\mathrm{Ker}}}
\newcommand\PSO{{\mathsf{PSO}}}
\newcommand\Bs{{\mathsf{B}}}
\newcommand{\va}{\mathbf{a}}
\newcommand{\vb}{\mathbf{b}}
\newcommand{\vc}{\mathbf{c}}
\newcommand{\bg}{\mathsf{g}}
	\newtheorem{theorem}{Theorem}[section]
\newtheorem{proposition}[theorem]{Proposition}
\newtheorem{lemma}[theorem]{Lemma}
\newtheorem{corollary}[theorem]{Corollary}
\theoremstyle{definition}
\newtheorem{definition}{Definition}[section]
\theoremstyle{remark}
\begin{document}



\title{Deformations of Margulis space-times with parabolics}


\author{Suhyoung Choi} 
\address{Department of Mathematical Sciences, KAIST,
		305-701, Daejeon, Republic of Korea}
\email{schoi@math.kaist.ac.kr}

\thanks{This work was supported by 
the National Research Foundation of Korea(NRF) grant funded by the Korea government 
(MEST) (No. NRF-2022R1A2C3003162).}

\begin{abstract} 
	Let $\Lspace$ be a flat Lorentzian space of signature $(2, 1)$. 
	A Margulis space-time  is a noncompact complete Lorentz flat $3$-manifold $\Lspace/\Gamma$ with a free isometry group $\Gamma$ of rank $\bg, \bg \geq 2$. 
	We consider the case when $\Gamma$ contains a parabolic element.
We show that sufficiently small deformations of $\Gamma$ still act properly on 
$\Lspace$. 
We use our previous work showing that $\Lspace/\Gamma$ can be compactified relative to a union of solid tori and some old idea of Carri\`ere in his famous work. 
We will show that the there is also a decomposition of $\Lspace/\Gamma$ by crooked planes that are disjoint and embedded in a generalized sense. These can be perturbed so that $\Lspace/\Gamma$ decomposes into cells. This partially affirms the conjecture of Charette-Drumm-Goldman.

\end{abstract} 
	
	




\keywords{geometric structures, flat Lorentz space-time, Margulis space-time, $3$-manifolds}

\subjclass[MSC 2020]{57M50,  83A99}


\date{\today}

\maketitle 


\section{Introduction} 
Let $\Lspace$ denote $\bR^{2, 1}$ with the standard coordinate systems, 
the standard Lorentzian inner product, and the standard orientation. 
Let $\Gamma$ be a discrete free subgroup of $\Isom(\Lspace)$ of rank $\geq 2$ acting properly discontinuously on $\Lspace$. 
Then $\Lspace/\Gamma$ is said to be a {\em Margulis space-time}. 

Every element of $\Isom(\Lspace)$ is of form $x \mapsto Ax + b$ for $A \in \SO(2, 1)$ and $b\in \bR^{2, 1}$. 
There is a homomorphism $\mathcal{L}:\Isom(\Lspace) \ra \SO(2, 1)$ taking the linear part of the affine transformations. 
Denote by $\SO(2, 1)^+$ the subgroup of elements preserving the upper cone. 
We denote by $\Isom^+(\Lspace)$ the inverse image of $\SO(2, 1)^+$ in $\Isom(\Lspace)$. 


We \cite{CDG22} proved earlier that every Margulis space-time $M$ with parabolics 
can be compactified relative to solid tori. That is we can add a totally geodesic real projective surfaces and remove a union of disjoint solid tori to obtain a compact $3$-manifold whose interior is homemorphic to $M$.

We can consider $\SO(2, 1)$ to act on a hyperbolic space using the Klein model by projectivizing. 
Note by the standard complete 
hyperbolic surface theory due to Fricke that for any free subgroup 
$\Gamma$ of rank $n$, $n\geq 1$, of $\Isom(\Lspace)$, 
we can deform it to one without 
parabolics by considering the quotient the hyperbolic space under 
$\Gamma$ as a complete hyperbolic surface and deforming. 
Actually, the subspace where there are parabolics is an analytic subspace of 
codimension $i$ where $i$ is the number of parabolic cusps. 
Since parabolics can only occur for elements parallel to boundary, we need to 
remove some finitely many subspaces of the Fricke space to obtain 
Fuchsian groups without parabolics. 
(See Lemma \ref{lem:deformnop}.)  

We prove the following connectivity: 
\begin{theorem}[Deformation of Margulis space-times] \label{thm:main} 
Let $\Gamma$ be a discrete free group in $\Isom(\Lspace)$ of rank $n \geq 2$ with a parabolic.  
Suppose that $\Gamma$ acts properly on $\Lspace$ so that 
$\Lspace/\Gamma$ is a manifold.  
Let $g_1,\dots, g_n$ denote the free generators. 
Then 
there exists a neighborhood $N$ of $(g_1, \dots, g_n) \in \Isom(\Lspace)^n$, 
the group $\Gamma_\mu$ 
generated by each $\mu=(g'_1, \dots, g'_n)\in N$ without parabolics
is a free group of rank $n$ acting properly discontinuously and freely on $\Lspace$. 
\end{theorem}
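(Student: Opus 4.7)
The plan is to combine the relative compactification of $\Lspace/\Gamma$ from \cite{CDG22} with a fundamental polyhedron built from generalized crooked planes, and then to perturb this polyhedron using a continuity/openness argument in the spirit of Carri\`ere's original discompactness technique. The role of the crooked planes is to transfer the abstract perturbation of generators into a concrete tiling that witnesses proper discontinuity.

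First, I would use the compactification theorem to realize $\Lspace/\Gamma$ as the interior of a compact $3$-manifold $\bar M$ obtained by attaching totally geodesic real projective surfaces at infinity and excising a disjoint union of solid tori $T_1,\dots,T_k$ corresponding to the parabolic cusps; the linear part $\mathcal{L}(\Gamma)$ then uniformizes a complete hyperbolic surface of finite type with $k$ cusps. Next, for this $\Gamma$ I would construct a family of $2n$ generalized crooked planes $C_i^\pm$ with $g_i C_i^- = C_i^+$ that are pairwise disjoint in $\Lspace$ (possibly sharing only limit points at parabolic vertices), bound a fundamental polyhedron $P$ for $\Gamma$, and induce the cell decomposition of $\Lspace/\Gamma$ promised in the abstract. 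For hyperbolic generators one uses the standard Drumm crooked planes; for parabolic generators one uses the variant adapted to the solid-torus ends coming from the compactification.

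Given a small perturbation $\mu=(g'_1,\dots,g'_n)\in N$ without parabolics, Lemma \ref{lem:deformnop} produces a nearby free purely hyperbolic Fuchsian linear part $\mathcal{L}(\Gamma_\mu)$ of the same rank. Continuous dependence of crooked planes on their defining data allows one to deform each $C_i^\pm$ to a crooked plane $C_i^{\pm,\mu}$ satisfying $g'_i C_i^{-,\mu}=C_i^{+,\mu}$. Pairwise disjointness is an open condition on compact subsets, and each former cusp opens into a hyperbolic neck so that crooked planes meeting only at the ideal parabolic fixed point now separate genuinely in $\Lspace$; thus for $\mu$ sufficiently close one obtains an honest crooked fundamental polyhedron $P^\mu$ for the group $\Gamma_\mu$ generated by $\mu$. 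Drumm's theorem on crooked fundamental domains then yields proper discontinuity and freeness of the $\Gamma_\mu$-action on $\Lspace$, and the tiling forces $\Gamma_\mu$ to be free of rank $n$.

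The main obstacle, in my view, is uniform control of the perturbed crooked planes near the cusps: at the unperturbed $\Gamma$, the two generalized crooked planes flanking a parabolic generator meet tangentially at the parabolic fixed point on the light cone, so naive continuity of the defining vectors does not by itself guarantee disjointness after perturbation. To handle this I would exploit the solid-torus neighborhoods from \cite{CDG22}, which deform continuously with the generators and confine the delicate region to a bounded piece of $\bar M$, together with Carri\`ere's dynamical argument applied to the now-hyperbolic linear parts: because contraction/expansion rates are bounded away from $1$ uniformly on a neighborhood of $\Gamma$, one obtains the estimates needed to verify that the perturbed crooked planes separate cleanly in $\Lspace$ and that their $\Gamma_\mu$-translates tile it, which closes the argument.
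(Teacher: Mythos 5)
Your proposal inverts the logical order of the paper and, as written, rests on an input that is not available. The key step you assume --- a fundamental polyhedron for the original group $\Gamma$ (with parabolics) bounded by pairwise disjoint (generalized) crooked planes --- is essentially the Charette--Drumm--Goldman crooked plane conjecture in the parabolic case. In this paper that decomposition is obtained only \emph{after} Theorem \ref{thm:main} is proved: Corollary \ref{cor:main} is derived by taking Chabauty limits, as $t\to 0$, of the Danciger--Gu\'eritaud--Kassel crooked planes for the already-deformed groups $\Gamma_t$ without parabolics, and this requires knowing in advance that those $\Gamma_t$ act properly --- i.e.\ it requires Theorem \ref{thm:main}. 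Moreover, even the limits one gets are only quasi-disjoint and quasi-injective (wings may coincide at cusp-hanging ends, and there may be isolated intersections), so they do not form an honest Drumm fundamental polyhedron to which Drumm's tiling theorem could be applied; the paper explicitly states that upgrading to genuine disjointness is what remains open. Your acknowledged ``main obstacle'' (tangency of the flanking crooked planes at the parabolic fixed point, and the fact that the Drumm--Goldman disjointness criterion degenerates from the open condition $p_2-p_1\in\mathrm{int}(D(\vecu_2)-D(\vecu_1))$ to its closure) is therefore not a technical nuisance but the crux, and the proposed remedy --- openness on compact sets plus a vague appeal to discompactness --- does not control disjointness of these non-compact surfaces at infinity.

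The paper's actual route avoids crooked planes entirely in the proof of Theorem \ref{thm:main}. It uses the relative compactification of \cite{CDG22} to write $M$ as a compact piece $M_1$ union solid tori $T_i$ along the parabolic annuli ${\mathfrak{A}}_i$; deforms the solid tori directly using the now-hyperbolic $h_\mu(\gamma_i)$ and the opened-up boundary surface $\Sigma_\mu$; deforms the compact part by Thurston's deformation theory of $(\SI^3,\PGL(4,\bR))$-structures (transversal sections of the flat bundle); glues to get a compact real projective manifold $M_2(\mu)$; and then runs Carri\`ere's discompactness argument on the Kuiper completion to show $\tilde M_2(\mu)^o$ develops onto a convex domain, which the limit-set argument forces to be all of $\Lspace$. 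If you want to salvage a crooked-plane proof, you would need an independent construction of a genuine crooked fundamental domain for proper affine deformations with parabolics, which is precisely the unresolved part of the conjecture.
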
 


Let $\Lspace$ have the standard coordinates $x, y, z$ so that the bilinear form is given 
by $x^2+ y^2 - z^2$. 
Given a space-like $\vecu \in \bR^{2,1}$, the {\em null frame} associated to $\vecu$ is 
the basis $(\vecu, \vecu^-, \vecu^+)$ in the standard orientation 
where $\vecu^+, \vecu^-$ are null vectors such that 
\begin{itemize} 
\item $\vecu^\perp = \langle \vecu^-, \vecu^+ \rangle$.
\item the third coordinates are $1$. 
\end{itemize}

Given a nonzero null-vector $\vecv$ in the closure of the positive cone, we define 
$P(\vecv):= \{ \vecx| \vecx \cdot \vecv = 0\}$. 
Note $\bR \vecv \subset P(\vecv)$ and 
$P(\vecv) - \bR \vecv$ has two components. 
Choose any $\vecw$ so that $\vecw$ and $\vecv$ lies on the closure of the positive 
cone. Then we define a {\em wing} at $\vecv$ to be 
\[
W(\vecv):= \{ \vecx \in P(\vecv)| (\vecv, \vecx, \vecw) 
\hbox{ is in the standard orientation} \}, 
\]
which is a component of $P(\vecv) - \bR \vecv$ and is independent of the choice of $\vecw$.
(But it is dependent on the choice of orientation of $\Lspace$.) 

A {\em crooked plane} in $\Lspace$ determined for a space-like vector $\vecu$ 
is given by a union of
\begin{itemize} 
\item a {\em stem} $p + \{\vecx \in \vecu^\perp: \vecx \cdot \vecx \leq 0\}$
which is a union of two closed cones of dimension $2$, 
\item and a pair of wings: 
\[p+  W(\vecu^-), p +   W(\vecu^+).\] 
\end{itemize} 
This is denoted by $\mathcal{C}(p, \vecu)$. 
Here, $p$ is called the origin of the crooked plane. 
(These are ``positively oriented ones". Of course, note that the definition depends on the orientation of $\Lspace$.)
Notice that the crooked planes are properly embedded disks topologically. 

%
%
%
%
A {\em crooked plane}  in $\Lspace/\Gamma$ is an injective image of 
a crooked plane. This is a $2$-cell which is not totally geodsics at two null lines which 
form the boundary of the stem.  Notice that 
we do not require proper embedding properties of 
the crooked plane in the image here. 

Let $\bR^4$ denote the standard Euclidean $4$-space with coordinates $x, y, z, t$.
The space of rays in $\bR^4$ identifies with a $3$-sphere $\SI^3$.
$\Lspace$ can be identifed as the subspace given by letting $t = 1$, and 
an open hemisphere in $\SI^3$ given by $t > 0$. 
Let $\mathcal{H}$ is the closed hemisphere that is the closure of $\Lspace$. 
We can consider $\Lspace$ as the affine subspace compactified by a $2$-sphere $\Ss$. 
We denote by $\Ss_+$ the space of rays in the positive cone. This forms a Klein model of the hyperbolic plane. 
There is the antipodal image $\Ss_-$, and we define $\Ss_0:= \Ss \setminus \Ss_+ \setminus \Ss_-$.
(See \cite{CDG22} for detail.)

Let $\mathcal{CR}(M)$ denote the space of  
immersed crooked planes with the quotient topology from $\mathcal{CR}(\Lspace)$ in 
Appendix \ref{app:metricspace}. 
Let $\mathcal{CR}(\Lspace)_K$ denote the the subset of $\mathcal{CR}(\Lspace)$ 
of crooked planes meeting $K$. 

Note that any isometry of $\Lspace$ is an affine transformation of $\Lspace$ which extends to a projective automorphism of $\SI^3$ acting on $\mathcal{H}= \clo(\Lspace)$. 
Hence, we have $\Isom(\Lspace) \subset \PGL(4, \bR)$ in a natural way.

The closure of a  crooked plane meets $\Ss_+$ in a complete geodesic. 
The orientation of the geodesic is not determined since 
$\mathcal{C}(p, \vecu) = \mathcal{C}(p, -\vecu)$. 
Conversely, an unoriented complete geodesic in $\Ss_+$ and the origin determines 
a crooked plane. 
Hence, the space $\mathcal{CR}(\Lspace)$ 
of crooked plane is in a one-to-one correspondence with 
$C(\Ss_+)\times \Lspace$ for 
the set $C(\Ss_+)$ of unoriented complete geodesics in $\Ss_+$.
(See Appendix \ref{app:metricspace} for the definition of this space.)

Let $\Gamma$ be a discrete isometry group acting on $\Lspace$ prorperly discontinuously and freely. 
We will assume that the linear part of $\Gamma$ preserves $\Ss_+$ in mostly. 
Let $\Sigma_+$ denote the complete hyperbolic surface $\Ss_+/\Gamma$.

\begin{itemize} 
\item A {\em horospherical cusp neighborhood} is 
an end neighborhood in $\Sigma_+$ corresponding to a parabolic fixed point in 
$\Ss_+$ which is an image of a horodisk where the parabolic element acts on. 
\item Let $c$ be a simple closed geodesic parallel to a boundary component of $\Sigma_+$. 
A {\em hypercyclic neighborhood} of $c$ in $\Sigma_+$ is a neighborhood of $c$ that is
the set of points of distance $< C$ for some constant $C> 0$. 
\item Now, $c$ bounds an end neighborhood corresponding the end,
which we call it the {\em geodesic end-neighborhood}. 
\item A {\em boundary neighborhood} is of an end of $\Sigma_+$ is either a horospherical cusp 
neighborhood or a hypercyclic neighborhood union the geodesic end-neighborhood. 
\end{itemize} 

If we take the union of the interiors of geodesic end neighborhoods, and remove it, 
we obtain a hyperbolic surface $\Sigma_+^*$ with geodesic boundary and some cusps. 

Recall that $\Gamma$ acts on $\Ss_+$ and the quotient is a complete hyperbolic surface
$\Sigma_+$. This is a finite-type complete hyperbolic surface with possible cusps and ends
that is compactified by a union of closed circles.

We will say that a crooked plane $P$ in $\Lspace$ is of {\em finite type} if $\clo(P) \cap \Sigma_+$ maps to a geodesic segment of finite length or it has a finite length outside a union
neighborhood of cusps or boundary parallel closed geodesics of $\Sigma_+$. 


A crooked plane $P$ in $\Lspace$ goes to an 
{\em immersed} crooked plane if each of the wings and stems of
a crooked plane $P$ goes to $\Lspace/\Gamma$ as an immersion. 
We do not require injectivity at all. 

We define a generalization of disjointness. 
We say that $\mathcal{C}(p_1, \vecu_1)$ and $\mathcal{C}(p_2, \vecu_2)$ in $\Lspace$ are 
{\em quasi-disjoint} if if they are the 
respective Chabauty  limits of sequences of pairwise disjoint crooked planes. 
(See Appendix \ref{app:metricspace}.) 

An embedded geodesic {\em spiral} along a simple closed geodesic $c$  if 
the distance to $c$ along the geodesic converges to $0$.  

A union of immersed crooked planes in $\Lspace/\Gamma$ is 
{\em quasi-injective} if the inverse image of it is a set of crooked planes in $\Lspace$ 
quasi-disjoint with respect to one another. 
The closure of a crooked plane $P$ in $\Lspace$ meets a geodesic arc $s(P)$ in $\Ss_+$. 
It is {\em cusp hanging in an end of $s(P)$} if the endpoint is a parabolic fixed point of $\Gamma$. 
It is {\em boundary spiraling in an end of $s(P)$} 
if the endpoint is a fixed point of a hyperbolic element of 
$\Gamma$.  These images in $\Lspace/\Gamma$ are called by the same names respectively.  
The points involved are 
called the endpoint of $P$ {\em cusp hanging} or {\em boundary spiraling} respectively. 
Each wing corresponds to the endpoint of $s(P)$. Hence, 
we call a wing is {\em cusp hanging} or {\em boundary spiraling} 
if the corresponding end of $s(P)$ is so
respectively. Also, the image in $\Lspace/\Gamma$ of the wing is called the same. 
(See Page \pageref{hanging} also.)

A crooked plane $P$ so that $s(P)$ gives us the shortest geodesics between 
boundary components of geodesic end neighborhoods or some horocyclic cusp neighborhoods of 
$\Sigma_+$ are called {\em short} crooked planes. 
These will not be boundary spiraling since they cannot give shortest geodesic between 
geodesic end neighborhoods. 

Danciger-Gu\'eritaud-Kassel  \cite{DGK162} showed that when there are no parabolics, 
$M$ can be decomposed by short crooked planes $P_1, \dots, P_m$. 
(See Definition 1.5 and Section 7.4 of their paper where we can infer that they are using these.) 

We can generalize this; however, we lose some embeddedness properties, particularly for parabolic holonomy.  These are in a generalized sense a decomposition since we can topologically change these by  isotopies to a cell decomposition, 

\begin{corollary} \label{cor:main}
Any Margulis space-time contains a quasi-disjoint and 
quasi-injective union of immersed short crooked planes so that the closure of 
each component of the complement is contractible
where the images of crooked planes may meet in finitely many points, which may include self-intersections at isolated points 
and the images of wings may coincide provided the wings are cusp-hanging ones. 
There are the only intersections. 
\end{corollary}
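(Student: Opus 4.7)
\medskip

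\noindent\textbf{Proof proposal.} The plan is to obtain the decomposition as a Chabauty limit of the Danciger--Gu\'eritaud--Kassel decompositions for nearby parabolic-free holonomies, and then to analyze the limit carefully to identify exactly which degenerations may occur. Concretely, I would first invoke Lemma \ref{lem:deformnop} together with Theorem \ref{thm:main} to produce a one-parameter analytic family $\Gamma_t$, $t\in[0,1]$, of free discrete groups in $\Isom(\Lspace)$, acting properly on $\Lspace$, with $\Gamma_0=\Gamma$ and $\Gamma_t$ parabolic-free for $t>0$ small. This is the point where the main theorem of the paper is used in an essential way: it guarantees properness throughout the family.

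For each $t>0$, the result of Danciger--Gu\'eritaud--Kassel \cite{DGK162} produces a finite collection $P_1(t),\dots,P_m(t)$ of pairwise disjoint embedded short crooked planes in $\Lspace/\Gamma_t$ whose complementary components have contractible closures. I would parametrize each $P_i(t)$ by the pair (unoriented geodesic $s(P_i(t))$ in $\Ss_+/\Gamma_t$, origin) and use the identification of $\mathcal{CR}(\Lspace)$ with $C(\Ss_+)\times \Lspace$ stated in the excerpt. Since the short geodesics between the relevant boundary components of $\Sigma_{+,t}^*$ vary continuously in $t$ as the hyperbolic structure degenerates, I would extract a Chabauty limit $P_1,\dots,P_m$ in $\mathcal{CR}(\Lspace)$ after lifting to the universal cover and choosing compatible base lifts. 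By definition of the Chabauty limit, any two lifts of $P_i$ and $P_j$ in $\Lspace$ are quasi-disjoint, so the union is automatically quasi-injective in $\Lspace/\Gamma$.

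Next I would classify the three ways in which a transverse disjointness at $t>0$ can fail at $t=0$, by looking at the limiting configuration of the geodesic arcs $s(P_i(t))$ on the surfaces $\Sigma_{+,t}$. Case one: the endpoints of $s(P_i(t))$ on a hyperbolic boundary converge to the boundary component itself; then the endpoint of $s(P_i)$ becomes a fixed point of a hyperbolic element of $\Gamma$, and the corresponding wing is boundary spiraling, contributing at most isolated transverse crossings or isolated self-intersections between the resulting wings (since two boundary-spiralling wings on the same boundary cycle wrap around at different rates and cross only finitely often before exiting the hypercyclic neighborhood). Case two: an endpoint approaches a cusp as $t\to 0$; then the wing becomes cusp hanging, and since all cusp-hanging wings at a given cusp lie on the same null line emanating from the parabolic fixed point, two such wings either are disjoint or coincide --- which is exactly the exception allowed in the statement. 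Case three: nothing degenerates, the planes remain disjoint and embedded in the limit. I would verify that away from horocyclic/hypercyclic boundary neighborhoods, the convergence is smooth and uniform on compacta, so only finitely many intersection points can accumulate.

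Finally, contractibility of the closures of complementary components follows because, on any fixed compact subset of $\Lspace/\Gamma$, the decomposition for small $t>0$ is by disjoint embedded disks cutting the region into contractible cells, and a Chabauty/Hausdorff-continuous deformation of such a cellulation that introduces only isolated crossings and wing identifications at cusps does not change the homotopy type of the closed cells; this can be checked by a local model at each degeneration, namely a horocyclic neighborhood of a cusp where the cell is the complement of the coincident cusp-hanging wings (still a half-space), and a hypercyclic neighborhood of a closed geodesic where nearby spiraling wings differ by isotopies in the solid-torus compactification of \cite{CDG22}. \textbf{The main obstacle} I anticipate is not the existence of the limit or the topological picture inside a compact set, but the control of the decomposition inside the boundary neighborhoods: one must rule out infinitely many intersections of spiraling wings and show that the coincidences of cusp-hanging wings are the only non-transverse phenomenon; this is where the compactification of $M$ relative to solid tori in \cite{CDG22} must be used to tame the behavior of crooked planes entering the ends.
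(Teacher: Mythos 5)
Your overall strategy---deform to a parabolic-free holonomy via Theorem \ref{thm:main}, invoke Danciger--Gu\'eritaud--Kassel for $t>0$, and pass to Chabauty limits---is the same as the paper's, but there is a genuine gap at the central step: you assume the Chabauty limits $P_1,\dots,P_m$ are crooked planes. A Chabauty limit of a sequence of crooked planes need not be one: by Lemma \ref{lem:divcrook} it can be a complete time-like plane, a complete null plane, a strongly null half-plane (possibly homologically doubled), a union of a time-like half-plane and a strongly null half-plane, or the empty set. Ruling out these degenerations is the bulk of the paper's work: Lemma \ref{lem:geolimdisk} (limits of disjoint crooked planes cannot meet transversally at flat points) combined with Lemma \ref{lem:noplane} (no nontrivial element preserves a null plane or strongly null half-plane) eliminates the planar limits, and the $\bZ_2$-homological intersection argument with the bouquet $K_t$ (Lemmas \ref{lem:essential} and \ref{lem:homdouble}) eliminates the doubled null half-plane; one also needs Corollary \ref{cor:nodegeneracy} to keep the origins of the lifts in a bounded subset of $\Lspace$. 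None of this appears in your proposal, and without it the limit object may fail to meet a given compact set, let alone yield a decomposition.

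A second, smaller problem is your Case one: you allow boundary-spiraling wings in the limit and assert they cross only finitely often because they ``wrap around at different rates.'' The paper instead uses shortness to exclude spiraling entirely: since each $s(P_i(t))$ is a shortest geodesic between boundary neighborhoods, its limit meets a boundary-parallel geodesic at a single well-defined point rather than spiraling, distinct limits hit distinct points, and therefore wing coincidences can only occur at cusp-hanging ends---which is exactly the exception in the statement. Infinite-length limiting leaves are excluded by Lemma \ref{lem:noinfinite} and Proposition \ref{prop:finitehomotopy}; your wrap-rate argument is neither justified nor needed. Finally, for contractibility the paper does not rely only on a local model near the ends: it uses the finiteness and eventual stability of the homotopy classes of the $C_{i,t}$ (Propositions \ref{prop:LocFinite} and \ref{prop:finitehomotopy}) together with a transversal perturbation, Dehn's lemma, and disk exchanges to produce embedded disks with the same boundaries whose complement is a cell. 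You should incorporate these steps to close the argument.
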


This answers  Conjecture of Charette-Drumm-Goldman partially. 
We need to improve to the embeddings and disjointness of the crooked planes to obtain 
the full conjecture. 
When $\Gamma$ has no parabolics, this was proved by Danciger, Gu\'eritaud, and Kassel \cite{DGK162}. 
Also, they announced the proof of the full conjecture some time ago  \cite{DGKp}. 


We give some outline of the proof: 
In Section \ref{sec:preliminary}, we review Chabauty topology and 
the parabolic actions on $\Lspace$.
In Section \ref{sec:deforming}, we will discuss about deforming the Margulis space-times. 
For deformation of Margulis space-times with parabolics, we use the earlier result from \cite{CDG22} and 
do the relative compactification to obtain $M_c$.  
This means adding a real projective surface $\Sigma$ 
that is a union of a two isometric 
complete hyperbolic surface $\Sigma_+$ and $\Sigma_-$ with cusps and some annuli. 
We remove the solid tori corresponding to parabolics to obtain a compact $3$-manifold $M_{c, r}$.
The boundary of each solid tori is an annulus ending in the boundary of 
a cusp neighborhood in $\Sigma_+$ and the corresponding one in $\Sigma_-$. 
See Section \ref{sub:before} for this. 

Then we deform $\Gamma$ to $\Gamma$':
We deform the solid tori first 
in Section \ref{sub:deformtori}. 
We deform $M_{c, r}$ to a compact $3$-manifold $M'_{c, r}$ with corresponding holonomy
using the theory of deformation of geometric structures by Thurston.
We glue back the solid tori deformed accordingly to obtain
a compact $3$-manifold $M'_{c, r, f}$ with a real projective structure
in Section \ref{sub:deform}. 


Now, we use the Carri\`ere's theory in Section \ref{sub:Carriere}.
The universal cover of $M'_{c, r, f}$ has a real projective structure. 
That is we show that the interior of $M''_{c, r, f}$ must be convex. 
For interior, the argument is not much different since we still have compact fundamental domain. 
Then the limit set consideration shows that the interior is 
a Margulis space-time without parabolics. 
We complete the proof of Theorem \ref{thm:main} in Section \ref{sub:generalcase}.

Now, we will use the theory of Danciger, Gu\'eritaud, and Kassel \cite{DGK162} for Margulis space-times without parabolics. 
These have disjoint crooked planes whose complement is a cell. 
These will be discussed in Section \ref{sec:crookedplanes}.  
First, we will discuss the disjointness of Crooked planes according to Drumm and Goldman in 
Section \ref{sub:disj}. We will also discuss about the homotopy classes of crooked planes in Section \ref{sub:genconj}.

Importantly, the homotopy classes of the crooked plane will change only finitely many times. 
From this we obtain the convergences to crooked planes that are disjoint but in some generalized sense.

%
%

In Appendix \ref{app:metricspace}, we will discuss about how a sequence of crooked planes can converge.

\subsection{Acknowledgement} 
We thank Virginie Charette, Jeffrey Danciger, Todd Drumm, and William Goldman
for many discussions.

\section{Preliminary} \label{sec:preliminary} 

Let $\mathcal{A}: \SI^3 \ra \SI^3$ sending $x$ to $-x$. 
For any set or a point $A$, we denote by $A_-$ the image $\mathcal{A}(A)$. 


For $x \in \partial \Ss_+$, we define $\zeta(x)$ to be the closed 
geodesic segment in $\Ss_0$ from $x$ to $x_-$ tangent to 
$\partial \Ss_+$ in the oriented direction.  
For any arc $\alpha$ in $\partial \Ss_+$, we define $Z(\alpha)$ the union 
of $\zeta(x)$ for $x\in \alpha$. This is a disk with boundary $\alpha$ and $\alpha_-$.

\subsection{Chabauty topology} \label{sub:Chabauty} 
We will be using the Chabauty topology for the subsets of $\Lspace$ and $\clo(\Lspace)$.

Let $C(\Lspace)$ denote the family of all closed subsets of $\Lspace$. 
For a compact $K \subset \Lspace$ and an open set $U$ of $\Lspace$, we set 
\begin{equation} 
\mathcal{U}_1(K):= \{C \in C(\Lspace)| C \cap K = \emp\}, 
\mathcal{U}_2(K):=\{C \in C(\Lspace)| C \cap U \ne \emp\}. 
\end{equation} 
The Chaubaty topology on $C(\Lspace)$ is the topology for which 
\[\{ \mathcal{U}_1(K)| K \hbox{ compact }\} \cup 
\{\mathcal{U}_2(K)| U \hbox{ open } \}\]
forms a subbasis. 

We recall that $C(\Lspace)$ is compact, and 
$C(\Lspace)$ is metrizable with a countable basis. 
(See Lemma E.1.1 of Benedetti-Petronio \cite{BP92}.) 

For example, 
note that $\{C_n\}$ in $C(\Lspace)$ converges to $\emptyset$
if for each compact subset $K'\subset \Lspace$, $C_n \cap K' =\emp$ for sufficiently large $n$. 
Also, in this case, $C_n$ is said to be {\em exiting}. 

Also, 
a sequence $\{C_n\}$ in $C(\Lspace)$ converges to $C \in C(\Lspace)$ if and only if 
the following two conditions are satisfied: 
\begin{itemize}
\item If $x \in \Lspace$ is such that there is a subsequence $\{C_{n_i}\}$ and $x_i \in C_{n_i}$ 
with $x_i \ra x$, then $x \in C$. 
\item Given $x \in C$, there exists $x_n \in C_n$ such that $x_n \ra x$ in $\Lspace$. 
\end{itemize} 
Here, we are using the standard Euclidean topology on $\Lspace$. 
(See Lemma  E.1.2 of \cite{BP92}.)

\begin{lemma} \label{lem:convS}
Under the Chabauty topology of $\Lspace$, the limit of a sequence of subspaces of dimension $i$ is again a subspace of dimension $i$ or is an empty set. 
A sequence of convex subsets of a subspace of dimension $i$ can converge only to
 a convex subset of dimension $\leq i$. 
\end{lemma}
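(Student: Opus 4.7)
The plan is to combine the explicit Chabauty characterization stated just before the lemma with the compactness of the Grassmannian of $i$-planes in $\bR^3$.

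For the first assertion, let $\{V_n\}$ be a Chabauty-convergent sequence of affine $i$-subspaces with limit $C$, and suppose $C$ is nonempty. Pick $p \in C$ and, using the second Chabauty condition, choose basepoints $p_n \in V_n$ with $p_n \to p$. Write $V_n = p_n + W_n$, where $W_n$ is the $i$-dimensional linear direction of $V_n$. Since the Grassmannian $\mathrm{Gr}_i(\bR^3)$ is compact, after passing to a subsequence (along which the full sequence still converges to $C$) I may assume $W_n \to W$ in the Grassmannian for some $i$-dimensional linear subspace $W$. I then claim $C = p + W$. Any $w \in W$ is approximated by some $w_n \in W_n$, so $p_n + w_n \in V_n$ converges to $p + w$, putting $p + w$ in $C$. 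Conversely, any $y \in C$ admits approximations $y_n \in V_n$ with $y_n - p_n \in W_n$ tending to $y - p$, and Grassmannian convergence forces $y - p \in W$. Because the argument applies along every subsequence along which $W_n$ converges, the limit direction is unambiguous, and $C$ is an affine $i$-subspace.

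For the second assertion, convexity of $C$ follows by interpolation: given $x, y \in C$ with approximations $x_n, y_n \in V_n$, each convex combination $t x_n + (1-t) y_n$ lies in $V_n$ and converges to $t x + (1-t) y$, which therefore belongs to $C$. For the dimension bound, suppose for contradiction that $\dim C \geq i+1$, and pick $i+2$ affinely independent points $x_0, \dots, x_{i+1} \in C$ with approximating sequences $x_j^{(n)} \in V_n$. Since each $V_n$ lies in an affine subspace of dimension at most $i$, the vectors $x_1^{(n)} - x_0^{(n)}, \dots, x_{i+1}^{(n)} - x_0^{(n)}$ are linearly dependent, so there exist coefficients $c_1^{(n)}, \dots, c_{i+1}^{(n)}$, not all zero, with $\sum_j c_j^{(n)} (x_j^{(n)} - x_0^{(n)}) = 0$. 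Normalize by $\sum_j |c_j^{(n)}| = 1$ and, using compactness of the unit sphere, extract a convergent subsequence $c_j^{(n)} \to c_j$ with $\sum_j |c_j| = 1$. Passing to the limit produces a nontrivial affine dependence among $x_0, \dots, x_{i+1}$, a contradiction.

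The main obstacle is the first step's control of the linear directions: one must invoke Grassmannian compactness to guarantee that an affine-subspace limit is again an affine subspace of the same dimension, rather than some lower-dimensional or non-affine closed set. Once this is secured, the convexity and dimension estimates follow from routine sequential arguments using affine dependence.
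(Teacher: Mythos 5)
Your proof is correct and follows essentially the same route as the paper, which simply cites the sequential characterization of Chabauty convergence (Lemma E.1.2 of Benedetti--Petronio) and leaves the details to the reader. You have filled in exactly those details — Grassmannian compactness for the direction subspaces and a normalized affine-dependence argument for the dimension bound — and both steps are sound.
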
 
\begin{proof} 
The proof follows from the above item, i.e., Lemma E.1.2 of \cite{BP92}.
\end{proof} 

Obviously, 
a sequence of a convex segments always converges to a convex segment or the empty set. 

\begin{lemma} \label{lem:conv} 
Let $K_i$ be a sequence of closed convex domains in a subspace $P_i$ in $\Lspace$ 
with nonempty interiors. 
Suppose that $K_i$ is bounded by finitely many geodesics whose numbers are bounded uniformly. 
Let $K_i \ra K_\infty$ for a closed subset of $\Lspace$ under the Chabauty topology. 
Suppose that $P_i \ra P_\infty$ for a subspace $P_\infty$, and each boundary segments are convergent also. 
Then $K_\infty$ is a convex domain bounded by the union of the limit of the sequence of 
boundary geodesics of $K_i$
if $K_\infty$ has a nonempty interior in $P_\infty$. 
Otherwise, $K_\infty$ is a union of limits of the boundary geodesic segments.  
\end{lemma}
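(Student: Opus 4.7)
The plan is to first pass to a subsequence so that the number $m$ of boundary segments is constant, list them as $\sigma_{i,1}, \dots, \sigma_{i,m}$ with Chabauty limits $\sigma_{\infty, j}$ (each either a geodesic segment, a point, or empty, by Lemma \ref{lem:convS}), and set $L:=\bigcup_{j=1}^m \sigma_{\infty,j}\subset P_\infty$. I then aim to show three things in turn: (i) $K_\infty \subset P_\infty$ is convex and contains $L$; (ii) every boundary point of $K_\infty$ in $P_\infty$ lies in $L$; (iii) when $K_\infty$ has empty interior in $P_\infty$, $K_\infty = L$.

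For (i), $K_\infty\subset P_\infty$ follows from $K_n \subset P_n$ together with $P_n\to P_\infty$ and the limit characterization of Chabauty convergence from Lemma E.1.2 of \cite{BP92}. Convexity of $K_\infty$ is immediate: if $x,y\in K_\infty$, lift to $x_n,y_n\in K_n$, use convexity of $K_n$ to get $[x_n,y_n]\subset K_n$, and pass to the Chabauty limit. The inclusion $L\subset K_\infty$ follows since any point of a non-empty $\sigma_{\infty,j}$ is a Chabauty limit of points of $\sigma_{n,j}\subset K_n$.

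The key step (ii) is a crossing argument. Given $x \in \partial K_\infty$ (boundary taken in $P_\infty$) and $\epsilon>0$, choose $y \in P_\infty\setminus K_\infty$ with $|y-x|<\epsilon$. Pick $x_n \in K_n$ with $x_n\to x$ (by Chabauty convergence) and $y_n \in P_n$ with $y_n\to y$ (using $P_n\to P_\infty$). Since $K_\infty$ is closed and avoids a small compact neighborhood of $y$, the $\mathcal{U}_1$ part of the Chabauty topology forces $y_n\notin K_n$ for all large $n$. The segment $[x_n,y_n]\subset P_n$ therefore meets $\partial K_n$, yielding a point $z_n\in \sigma_{n,j(n)}$ for some index $j(n)\in\{1,\dots,m\}$. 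Pigeon-holing the finitely many indices and passing to a subsequence, $z_n\in \sigma_{n,j}$ for a fixed $j$, and a further subsequential limit gives $z\in \sigma_{\infty,j}\subset L$ with $|z-x|\le |y-x|\le \epsilon$. Letting $\epsilon\to 0$ and using closedness of $L$ places $x$ in $L$.

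To finish, I split on the interior. In the nonempty-interior case, (ii) gives $\partial K_\infty \subset L$, so $K_\infty$ is a convex domain whose boundary is carried by the $\sigma_{\infty,j}$; to see that each non-degenerate $\sigma_{\infty,j}$ actually sits on $\partial K_\infty$, I will take a supporting half-plane of $K_n$ in $P_n$ along $\sigma_{n,j}$ and pass to its Chabauty limit, obtaining a supporting half-plane of $K_\infty$ in $P_\infty$ with $\sigma_{\infty,j}$ on its boundary. In the empty-interior case, every $x\in K_\infty$ has points of $P_\infty\setminus K_\infty$ arbitrarily close, so the same crossing argument applies verbatim and gives $K_\infty\subset L$, hence $K_\infty = L$. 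The main technical obstacle I anticipate is the step of approximating an arbitrary point of $P_\infty$ by points of $P_n$ along the $P_n$, needed both to produce $y_n$ in (ii) and to ensure that the subspaces only degenerate by losing dimension; this is where Lemma \ref{lem:convS} and the precise form of Chabauty convergence play the decisive role.
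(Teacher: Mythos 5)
Your proposal is correct, and it reaches the conclusion by a genuinely more explicit route than the paper. The paper's proof is a two-line coordinate argument: it writes $K_i$ as the solution set of a linear equation (for $P_i$) together with finitely many linear inequalities normalized to be perpendicular to $P_i$, and then "takes limits" of the defining data via Lemma E.1.2 of \cite{BP92}. That argument implicitly needs the fact that the Chabauty limit of an intersection of half-spaces agrees with the intersection of the limits, which is exactly the point your crossing argument makes rigorous: you verify convexity of $K_\infty$ directly from the sequential characterization of Chabauty convergence, and then show $\partial K_\infty\subset L$ by producing, for each boundary point $x$, a segment $[x_n,y_n]\subset P_n$ from $K_n$ to its complement, pigeonholing the finitely many (uniformly bounded in number) boundary faces it must cross, and passing to a subsequential limit. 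Your treatment of the empty-interior case (every point is a relative boundary point, so the same crossing argument gives $K_\infty\subset L$, and $L\subset K_\infty$ is automatic) and your supporting-half-plane step to place each nonempty $\sigma_{\infty,j}$ on $\partial K_\infty$ are both sound and fill in details the paper leaves to the reader; the only cost is length, and the only caveat is that your initial passage to a subsequence with a constant number $m$ of faces is harmless here because the Chabauty limits $K_\infty$ and $\sigma_{\infty,j}$ are already fixed by hypothesis.
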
 
\begin{proof} 
We can consider these as given by a linear equation for $P_i$  and finitely many inequatlities for 
sides of $K_i$ in coordinates. 
The zero sets of the inequalities could be assumed to be perpendicular to that of $P_i$. 
Then the argument follows by taking limits and Lemma E.1.2 of \cite{BP92}.  
\end{proof}

\subsection{Actions of parabolic groups} \label{sub:action}

\begin{definition} 
	Let $N$ be a nilpotent skew adjoint endomorphism. 
	We will call the frame $\va, \vb, \vc$ satisfying the following properties: 
	\begin{itemize} 
		\item $\vb = N(\va), \vc = N(\vb)$.
		\item $\va, \vc$ are null, and $\vb$ is of unit space-like.  
		\item $\Bs(\va, \vb)=0=\Bs(\vb, \vc), \Bs(\va, \vc) = -1$. 
	\end{itemize}
	an {\em adopted frame} of $N$.  
	We will say that $N$ is {\em accordant} if the adopted frame has the standard orientation. 
\end{definition}

\begin{corollary}[Corollary 3.8 in \cite{CDG22}] \label{cor:parac}
	Let $N$ be a nilpotent skew adjoint endomorphism. Then
the Lorentzian vectors $\va, \vb, \vc$ 
 satisfying the property that 
\begin{itemize} 
\item  $\Bs(\va, \vb)=0=\Bs(\vb, \vc), \Bs(\va, \vc) = -1$, 
\item $ \vc = N(\vb), \vb = N(\va)$, and 
\item $\vb$ is a unit space-like vector, $\vc \in \Ker N$ is {causally null},  and $\va$ is null
\end{itemize}
are determined up to changes 
$\vb \ra \vb + c_0 \vc, \va \ra \va + c_0 \vb + \frac{c_0^2}{2} \vc$ with respect to the  
a  skew-symmetric nilpotent endomorphism $N$ \red{and} $\Bs: V \times V \ra \bR$.  
Furthermore, the adopted frame for $N$ is determined only up to these changes and translations. 
\end{corollary}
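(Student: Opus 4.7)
The plan is to extract the stated ambiguity purely from the interplay between the Jordan structure of $N$, skew-adjointness with respect to $\Bs$, and the normalization conditions on $\va,\vb,\vc$.

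First I would observe that the orthogonality relations are \emph{consequences} of the nilpotent/skew-adjoint setup and need not be treated as extra constraints. Indeed, from $\vb=N\va$ and skew-adjointness, $\Bs(\va,\vb)=\Bs(\va,N\va)=-\Bs(N\va,\va)=-\Bs(\vb,\va)$ forces $\Bs(\va,\vb)=0$; an identical computation gives $\Bs(\vb,\vc)=0$. Similarly $\Bs(\vc,\vc)=\Bs(N^2\va,N^2\va)=\Bs(\va,N^4\va)$, which vanishes since on a three-dimensional Lorentzian space a nonzero nilpotent skew-adjoint $N$ has a single size-$3$ Jordan block (so $N^3=0$). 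In particular $\Ker N$ is one-dimensional and automatically isotropic, so $\vc$ is determined by $N$ up to a nonzero scalar; the causal condition cuts this down to a positive scalar.

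Next, given two adopted frames $(\va,\vb,\vc)$ and $(\va',\vb',\vc')$, I would write $\vc'=s\vc$, observe that $N(\vb'-s\vb)=0$ forces $\vb'=s\vb+r\vc$ for some $r\in\bR$, and compute
\[
\Bs(\vb',\vb')=s^2\Bs(\vb,\vb)+2sr\,\Bs(\vb,\vc)+r^2\Bs(\vc,\vc)=s^2,
\]
so the unit space-like condition on $\vb'$ yields $s=\pm 1$; the orientation (accordant) convention fixes $s=1$. Next, $N(\va'-\va-r\vb)\in\Ker N$, so $\va'=\va+r\vb+q\vc$ for some $q\in\bR$. Substituting into $\Bs(\va',\va')=0$ and using $\Bs(\va,\vc)=-1$ together with the previously established orthogonalities gives $r^2-2q=0$, i.e.\ $q=r^2/2$. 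The compatibility check $\Bs(\va',\vb')=0$ and $\Bs(\va',\vc')=-1$ is then automatic from the same relations. Setting $c_0:=r$ recovers exactly the stated family of changes, and conversely any such change manifestly preserves all the defining conditions, establishing the ``only up to'' statement as an equality of ambiguities.

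The main obstacle is purely the bookkeeping: keeping track of which constraints fix scalars (ruling out nontrivial rescalings) and which only cut out an affine condition (leaving the one-parameter family in $c_0$). The coefficient $c_0^2/2$ has to fall out cleanly from a single quadratic identity, so the proof really only turns on that one computation. Finally, for the closing clause, the word ``translations'' refers to the affine setting: when $N$ arises as the logarithm of the linear part of a parabolic isometry, the adopted frame is defined relative to a basepoint, and shifts of the basepoint along $\Ker N$ (i.e.\ in the $\vc$-direction) commute with $N$ and preserve all Lorentzian and orientation data, yielding the additional translational freedom; combined with the linear ambiguity above, this produces the full stated indeterminacy.
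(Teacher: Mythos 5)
The paper does not prove this statement at all --- it is imported verbatim as Corollary 3.8 of \cite{CDG22} --- so there is no internal proof to compare against; judged on its own, your verification is correct and is the natural (essentially forced) argument: the orthogonality relations follow from skew-adjointness, $\Ker N=\mathrm{im}(N^2)$ is an isotropic line pinning down $\vc$ up to a positive scalar, the unit condition on $\vb$ kills the scalar, and the single quadratic identity $r^2-2q=0$ produces the $c_0^2/2$ coefficient. One small correction: what excludes $\vc'=-\vc$ is the requirement that $\vc$ be \emph{causally} null (which you invoke correctly the first time), not the accordance/orientation convention, so the later attribution to orientation should be dropped. On the closing ``translations'' clause, note that none of the listed conditions constrain a basepoint, so the origin of an affine adopted frame is free up to \emph{arbitrary} translation, not only translation along $\Ker N$ as your gloss suggests.
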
 

Under an adopted frame, 
the bilinear form $\Bs$ takes the matrix form
\begin{equation}\label{eqn:Bform}  
	\left(
	\begin{array}{ccc}
		0 & 0 & -1 \\
		0 & 1 & 0 \\
		-1 & 0 & 0
	\end{array}
	\right).
\end{equation}

Let $\gamma$ be a parabolic transformation $\Lspace \ra \Lspace$.
{Then it must be of the form 
	\begin{equation}\label{eqn:Phit0}
		\Phi(t) := \exp t
		\left(
		\begin{array}{cc}
			N & \vec{v}_\gamma \\
			0 & 0 
		\end{array}
		\right) \hbox{ for an accordant nilopotent skew adjoint element } N. 
\end{equation} }
In  the adopted frame with a choice of origin, we have the matrices
\begin{equation}\label{eqn:Phit}
	\Phi(t) := \exp t
	\left(
	\begin{array}{cccc}
		0  & 1  & 0  &0\\
		0  & 0  & 1  & 0\\
		0  & 0  &  0 & \mu \\
		0 &  0 & 0 & 0 
	\end{array}
	\right)
	= 
	\left(
	\begin{array}{cccc}
		1 & t  & t^{2}/2 & \mu t^{3}/6  \\
		0   & 1  & t  & \mu t^{2}/2  \\
		0  &  0  & 1 & \mu t \\ 
		0 & 0 & 0 & 1   
	\end{array}
	\right)
\end{equation} 
for $\mu \in \bR$. 
This was proved in Section 3.1 of \cite{CDG22}. 

\subsection{Invariant subspaces} 

A {\em strongly null half-plane} is a half-plane in a null plane bounded by a null line. 
A {\em hyperbolic transformation} is an element who linear part is diagonalizable
with some eigenvalues not equal to $\pm 1$.
Recall that a  hyperbolic element which acts without fixed points acts on a complete affine line as a translation. This line is called an {\em axis}.

\begin{lemma}\label{lem:noplane} 
	Let $\Gamma$ be the Margulis group with or without parabolics. 
	Then the following holds\/{\em :}
\begin{itemize} 
\item for hyperbolic $g\in \Gamma -\{\Idd\}$, acting without a fixed point, 
	$g$ acts on two null-planes each of which contains its axis and parallel to a vector 
	corresponding to the attractor or a repellor, and 
	there is no strongly null half-plane where $g$ acts on. 
	There is no other null-plane where $g$ acts on. 
\item	For parabolic $g \in \Gamma -\{\Idd\}$, acting without a fixed point, there is no null plane or strongly null half-plane that it
	acts on.
\end{itemize}  
\end{lemma}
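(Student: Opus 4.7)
The plan is to analyze $g$-invariant affine null objects by first classifying the $\mathcal{L}(g)$-invariant $2$-planes in $\bR^{2,1}$ and then checking which of them support affine invariance, given the translation part of $g$. Two preliminary reductions simplify matters. First, the direction of any $g$-invariant affine null plane is a $\mathcal{L}(g)$-invariant $2$-plane that is itself null in $\bR^{2,1}$. Second, a $g$-invariant strongly null half-plane $H$ determines its containing null plane by $P = \mathrm{aff}(H)$, which is therefore $g$-invariant, and its bounding null line $\partial H$ is $g$-invariant as well; so it suffices to rule out $g$-invariant affine null lines inside the $g$-invariant affine null planes that we find.

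For hyperbolic $g$, the linear part $\mathcal{L}(g)$ diagonalizes over $\bR$ with distinct eigenvalues $\lambda, 1, 1/\lambda$ (since $\lambda \neq \pm 1$) and eigenvectors $\vu_+$ (null, attractor), $\vu_0$ (space-like, along the axis), $\vu_-$ (null, repellor); $\mathcal{L}(g)$-invariance of the Lorentzian form $\Bs$ forces $\vu_\pm \cdot \vu_0 = 0$. The three $\mathcal{L}(g)$-invariant $2$-planes are the pairwise spans, and restricting $\Bs$ shows that $\langle \vu_+, \vu_0\rangle$ and $\langle \vu_-, \vu_0\rangle$ are null with radicals $\langle \vu_+\rangle$ and $\langle \vu_-\rangle$ respectively, while $\langle \vu_+, \vu_-\rangle$ is Lorentzian. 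Write $g(x) = Ax + b$ and decompose $b = b_+ \vu_+ + b_0 \vu_0 + b_- \vu_-$. Since $\mathrm{Im}(A - I) = \langle \vu_+, \vu_-\rangle$, the fixed-point equation $(A - I)v = -b$ is solvable iff $b_0 = 0$; hence fixed-point-freeness is precisely $b_0 \neq 0$. For each $\mathcal{L}(g)$-invariant $2$-plane $W$, the existence of an affine invariant plane with direction $W$ is equivalent to $b \in W + \mathrm{Im}(A - I)$: this holds (uniquely) for $W = \langle \vu_\pm, \vu_0\rangle$, and a direct computation shows the resulting plane contains the axis; it fails for $W = \langle \vu_+, \vu_-\rangle$ because $b_0 \neq 0$. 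Finally, any $g$-invariant null line inside $\langle \vu_\pm, \vu_0\rangle$ would have to be parallel to the radical $\vu_\pm$, and an affine line in direction $\vu_\pm$ is $g$-invariant only if $b_0 = 0$; this rules out strongly null half-planes.

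For parabolic $g$, in the adopted frame $(\va, \vb, \vc)$ of Corollary \ref{cor:parac} the linear part $\mathcal{L}(g)$ is a single $3 \times 3$ upper-unipotent Jordan block, so the unique $\mathcal{L}(g)$-invariant $2$-plane is $\langle \va, \vb\rangle$, which is null with radical $\langle \va\rangle$. From the explicit matrix \eqref{eqn:Phit} one reads off that $\mathrm{Im}(A - I) = \langle \va, \vb\rangle$ and that the $\vc$-component of the translation part equals $\mu$. Hence fixed-point-freeness is $\mu \neq 0$, and the same condition obstructs affine invariance of any plane with direction $\langle \va, \vb\rangle$; by the second preliminary reduction, no $g$-invariant strongly null half-plane exists either.

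The main point needing care is the second preliminary reduction and the clean passage from linear to affine invariance via the image of $A - I$; once the eigenbasis or Jordan frame is fixed, the remainder is a short computation with the components of $b$, and the axis-containment statement in the hyperbolic case follows by comparing the solution of $(A - I)v + b \in W$ with the solution of $(A - I)v_0 + b \in \bR \vu_0$ defining the axis.
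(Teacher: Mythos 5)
Your overall strategy is essentially the paper's: classify the $\mathcal{L}(g)$-invariant $2$-planes, test which of them carry an invariant \emph{affine} plane against the translation part, and reduce the half-plane claim to the nonexistence of an invariant bounding null line. In the hyperbolic case you organize this more systematically around the solvability criterion $b \in W + \mathrm{Im}(A-I)$, where the paper instead argues via the four fixed points of $g$ in the closure of the plane in $\SI^3$ and the contracting direction; both are correct, and your version delivers the uniqueness of the two invariant affine null planes and the axis-containment by one line of linear algebra.

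There is, however, a concrete labeling error in your parabolic case. In the adopted frame of Corollary \ref{cor:parac} one has $\vb = N(\va)$, $\vc = N(\vb)$ and $\vc \in \Ker N$, so the unique $N$-invariant $2$-plane (which also equals $\mathrm{Im}(A-I)$) is $\langle \vb, \vc\rangle$, with null radical $\langle \vc\rangle$ --- not $\langle \va, \vb\rangle$, which is not even $N$-invariant since $N(\vb)=\vc \notin \langle \va,\vb\rangle$. Correspondingly, the component of the translation part of \eqref{eqn:Phit} that obstructs both a fixed point and an invariant affine plane is the $\va$-component $\mu t$ (the paper's ``third coordinate''), not the $\vc$-component, whose value is $\mu t^3/6$ and which lies harmlessly inside $\mathrm{Im}(A-I)$. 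With $\va$ and $\vc$ interchanged throughout that paragraph your computation becomes exactly the paper's and the argument is intact; as written, the stated invariant plane and the stated obstruction are both wrong, so please fix the frame before relying on it.
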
 
\begin{proof}
	Let $g$ be hyperbolic.  Then there are two independent null eigenvectors and a spacelike eigenvector. 
	The linear part $\mathcal{L}(g)$ acts on two null subspaces spanned by 
	a common  space-like eigenvector and two other null eigenvectors respectively. 
	There is no other null vector space where $\mathcal{L}(g)$ acts on
	since there is a nondegenerate bilinear form and the duality. 

 In the direction of 
space-like eigenvector, 
we can show that $g$ acts on a unique affine line $L$
by linear algebra.  And $g$ acts as a translation on $L$. 
There are two null-planes each of which contains $L$ and any point from a point of $L$ moving 
by one of the two null-eigenvectors. 
	
	Let $P$ be a null affine plane where $g$ acts on.  The closure of $P$ in $\SI^3$ 
contains 	four fixed points corresponding to the eigenvectors. 
	Since the linear part of $g$ 
	has a contracting direction, there must be an invariant line where $g$ acts as
a translation, which is $L$. 
	Hence, there are only two invariant affine null planes. 
	
	Strongly null half-space must be in one of the two. The action of $g$ in the affine planes shows that 
	there cannot be such null half-spaces. 
	
	Let $g$ be parabolic.  Then the linear part $\mathcal{L}(g)$ acts on the null space 
	tangent to the null vector which is the unique eigenvector. 
	Since a null vector plane contains a unique point of $\partial \Ss_+$, and 
	$\mathcal{L}(g)$ fixes a unique point of $\partial \Ss_+$, it follows that 
	there is unique null vector plane where $\mathcal{L}(g)$ acts on. 
We use the adopted frame of $\mathcal{L}(g)$. 
	The vector subspace is spanned by null $(1, 0, 0)$ and space-like $(0, 1, 0)$. 

It is clear that $\mathcal{L}(g)$ cannot act on any other vector space. 
	
	Suppose that there is an affine plane where $g$ acts on. 
	Then its direction is spanned by $(1, 0, 0)$ and $(0, 1, 0)$ by the above paragraph. 
	From the expression \eqref{eqn:Phit} of $g$, 
	we see that such an affine plane must move by $\mu t$ in the third coordinate. 
	This is a contradiction. 

Since a strongly null affine halfplane is in a null affine plane, we cannot have an invariant one as well.
\end{proof}

\section{Deforming} \label{sec:deforming} 

We will prove Theorem \ref{thm:main} in this section. 
Let a Margulis group $\Gamma \subset \Isom(\Lspace)$ be generated by $g_1, \dots, g_n$. 
Since $\Gamma$ is free, we may suppose that they are free generators. 

We will identify $\pi_1(M)$ with $\Gamma$ and the free group $F_n$ of rank $n$ here. 
Note that $\Hom(F_n, \Isom(\Lspace))$ can be identified with 
$\Isom(\Lspace)^n$ by sending 
\[\rho \mapsto (\rho(g_1), \rho(g_2), \dots, \rho(g_n)).\] 
Since $\Hom(F_n, \Isom(\Lspace)) = \Isom(\Lspace)^n$ is a real algebraic set. 
We use the product topology which is locally path-connected Riemannian geodesic-metric space.  
For $\mu$ in $\Isom(\Lspace)^n$,  
we define $\Gamma_\mu$ to be an image of holonomy 
homomorphism $h_\mu: \pi_1(M) \ra \Isom(\Lspace)$
where $(g_1, \dots, g_n) \mapsto \mu$. 

Let $\Pi: \SO(2, 1) \ra \PSO(2, 1)=\SO(2, 1)/\{\pm \Idd\}$ denote the quotient homomorphism.
A representation $h: F_n \ra \Isom(\Lspace)$ is {\em Fuchian} if 
$\Pi \circ \mathcal{L}\circ h:\pi_1(S)$ is a Fuchsian representation.


We note that the set of discrete faithful representations 
\[\mathcal{F}_n\subset \Hom(F_n, \PSO(2, 1))/\PSO(2, 1)\] 
is a union of open cell with some boundary points by Fricke.  
We call $\mathcal{F}_n$ the {\em Fricke subspace}. 
This is a semi-algebraic cell with boundary where the boundary consists of 
representations with parabolics. 
Here only the boundary parallel homotopy classes can be parabolic by 
the theory of Fricke. 

We recall the standard cone neighborhood theory following from 
the triangulations of the semialgebraic sets as in Theorem 9.2.1 of  \cite{Bochnak}: 
We define a {\em cone} to be a semi-algebraic space semi-algebraically 
homeomorphic  to a product $X\times [0, 1]$ collapsing $X\times \{0\}$ to a point. 
A point $x$ of a semi-algebraic set has a neighborhood with a semi-algebraic homeomorphism to a cone over a semi-algebraic set S in the boundary of the neighborhood with a cone-point at the origin corresponding to $x$. 

\begin{lemma} \label{lem:deformnop} 
Let $\rho:F_n \ra \Isom(\Lspace)$ be an affine deformation of a Fuchian representation
with $\mathcal{L}\circ \rho$ is a free group with a parabolic element. 
There is a cone-neighborhood $N$ of $\rho$ where 
the subset $N'$ of $N$ of representations without parabolics is 
an open dense cone which is a complement of finitely many hyperspaces in $N$ passing $\rho$, which are also cones. 
\end{lemma}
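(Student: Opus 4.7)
The plan is to reduce the lemma to finitely many algebraic trace conditions and then invoke the semi-algebraic cone-neighborhood theorem. First, since $\mathcal{L}\circ\rho$ projects to a discrete faithful representation of $F_n$ into $\PSO(2,1)$, the quotient $\Sigma = \Ss_+/(\mathcal{L}\circ\rho)(F_n)$ is a finite-type complete hyperbolic surface. By Fricke's theorem, in any discrete faithful Fuchsian representation of $F_n$ the parabolic conjugacy classes are exactly the peripheral ones, so there are finitely many candidate classes $[\gamma_1], \ldots, [\gamma_m]$ (one per boundary component of the underlying surface) that can ever yield a parabolic element. For each such $i$, the subset
\[
H_i := \{\rho' \in \Isom(\Lspace)^n : \mathcal{L}(\rho'(\gamma_i)) \text{ is parabolic}\}
\]
is cut out by a polynomial trace condition on $\mathcal{L}(\rho'(\gamma_i))$ together with the open condition $\mathcal{L}(\rho'(\gamma_i)) \neq \Idd$, so $H_i$ is a real algebraic hypersurface in $\Isom(\Lspace)^n$; it contains $\rho$ precisely when $\rho(\gamma_i)$ is itself parabolic, and the hypothesis guarantees that at least one such $H_i$ does contain $\rho$.

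The next step, and the main obstacle, is to show that in a small enough neighborhood of $\rho$ the parabolic locus in $\Isom(\Lspace)^n$ coincides with $H_1 \cup \cdots \cup H_m$. For each single non-peripheral conjugacy class $[g]$, continuity of the trace together with the strict inequality at $\rho$ gives a neighborhood on which $\mathcal{L}(\rho'(g))$ remains hyperbolic; the difficulty is to extract a uniform neighborhood that works for the infinitely many $[g]$ simultaneously. The plan is to use stability of discrete faithful Fuchsian representations near $\mathcal{L}\circ\rho$: a sufficiently small perturbation of $\mathcal{L}\circ\rho$ in $\SO(2,1)^n$ still projects into the closure of the Fricke subspace, in which (by Fricke again) only peripheral elements may be parabolic. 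This passage from pointwise to uniform stability, relying on standard Fuchsian deformation theory, is the crux of the argument.

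Having identified the parabolic locus near $\rho$ with the finite union $\bigcup_i H_i$ of algebraic hypersurfaces, the conclusion follows from Theorem 9.2.1 of \cite{Bochnak}. Applied to the semi-algebraic pair $(\Isom(\Lspace)^n, \bigcup_i H_i)$ at $\rho$, semi-algebraic triangulation produces a cone-neighborhood $N$ of $\rho$ in which each $H_i \cap N$ is a subcone through the cone-point and is therefore a ``hyperspace'' in the sense of the lemma. The complement $N' := N \setminus \bigcup_i H_i$ is then open, dense, and inherits a cone structure, and consists precisely of representations whose generators map to non-parabolic elements, completing the proof.
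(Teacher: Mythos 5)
Your overall strategy --- reduce to the finitely many peripheral conjugacy classes via Fricke theory, realize the parabolic locus as a finite union of algebraic hypersurfaces cut out by trace conditions, pull back along the linear-part map, and invoke the semi-algebraic cone-neighborhood theorem of \cite{Bochnak} --- is exactly the paper's. The paper's proof is terser: it sets $D_n$ to be the preimage in $\Hom(F_n,\PSO(2,1))$ of the Fricke subspace $\mathcal{F}_n$, observes that the parabolic locus $\hat D_n$ inside $D_n$ is a finite union of codimension-$\geq 1$ semi-algebraic subsets through $\rho$ (only boundary-parallel classes can degenerate), notes that $(\Pi\circ\mathcal{L})^n$ is a fiber bundle with fiber $(\bZ_2\times\bR^{2,1})^n$, and applies the cone-neighborhood theory.

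However, the step you correctly identify as the crux is where your argument breaks. The claim that ``a sufficiently small perturbation of $\mathcal{L}\circ\rho$ in $\SO(2,1)^n$ still projects into the closure of the Fricke subspace'' is false for a free group when $\rho$ has a parabolic: such a representation lies on the genuine boundary of the Fricke locus, and an arbitrarily small perturbation can make a peripheral element elliptic of infinite order, which destroys discreteness; by the Chuckrow--J{\o}rgensen stability of discrete faithful representations of non-elementary groups, the closure of the Fricke locus contains only discrete faithful representations, so these perturbations leave it. Consequently you cannot conclude that the parabolic locus in a full neighborhood of $\rho$ in $\Isom(\Lspace)^n$ equals $H_1\cup\cdots\cup H_m$; on the non-discrete side, the trace-$\pm 2$ hypersurfaces attached to infinitely many non-peripheral words are no longer controlled by Fricke's dichotomy and could a priori accumulate at $\rho$. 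The paper avoids this entirely by taking the cone-neighborhood $N$ inside the preimage of $D_n$ (this is also how the lemma is used later: $h_\mu$ is taken in a cone neighborhood of $\rho$ ``in $\mathcal{F}_n$''), where by definition every representation is discrete and faithful and Fricke's dichotomy applies. Your proof is repaired by making the same restriction, i.e., by reading ``cone-neighborhood'' as a neighborhood in the semi-algebraic set $(\mathcal{L}')^{-1}(D_n)$ rather than in all of $\Isom(\Lspace)^n$.
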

\begin{proof} 
Let $D_n \subset \Hom(F_n, \PSO(2,1))$ denote the inverse image of $\mathcal{F}_n$. 
The subset $\hat D_n$ of $D_n$ where some parabolic elements exist is a union of semi-algebraic subsets of codimension $\geq 1$ passing $\rho$
since only elements parallel to boundary can become parabolic. 
The subset in question is the inverse image of $\hat D_n$. 

There is a map 
$\mathcal{L}': \Hom(F_n, \Isom(\Lspace)) \ra \Hom(F_n, \PSO(2, 1))$ by sending 
$h(g)$ to $\Pi\circ \mathcal{L}(h(g))$ for $g \in F_n$. 
This can be considered as a fiber bundle
$(\Pi\circ\mathcal{L})^n:\Isom(\Lspace)^n \ra \PSO(2, 1)^n$
with the fibers isomorphic to $(\bZ_2\times \bR^{2, 1})^n$. 
The proposition follows from the standard 
cone-neighborhood theory of semi-algebraic set. 
%
\end{proof} 



\subsection{Before deformation} \label{sub:before} 
To begin, we suppose that $\mathcal{L}(\Gamma) \subset \Isom^+(\Lspace)$. 

Let $\rho$ be a proper affine deformation of a Fuchian representation with parabolics. 
Let $\Gamma$ denote its image. 
We recall some facts from \cite{CDG22}. 
Let $\Sigma_+ := \Ss_+/\Gamma$ and $\Sigma_- := \Ss_-/\Gamma$ are real projective surfaces. 
$\Gamma$ acts as Fuchsian group acting on the hyperbolic plane $\Ss_+$, and 
acts properly discontinuously and cocompactly 
on $\Ss_+ \cup a$ for a union $a$ of arcs in $\partial \Ss_+$. 
Then $\Sigma_+$ is dense in a complete surface $\hat \Sigma_+:= \Ss_+\cup a/\Gamma$ with cusps corresponding 
to the parabolic elements in $\Gamma$. 
The union $a$ covers a finitely many boundary components $a_1, \dots, a_m$ of $\hat \Sigma_+$. 
We choose a single lift $\tilde a_i$ for each $i$. 
$a$ is a union of images of $\tilde a_i, i=1, \dots, m$ under $\Gamma$. 
There are also mutually disjoint cusp neighborhoods $E_i$, $i=1, \dots, k$. 
For each $i$, we choose a horoball $\tilde E_i$ covering $E_i$, $i=1, \dots, k$. 
Also, $E_i$, $i=1,\dots, k$, form cusp neighborhoods in $\Sigma_+$, and there are corresponding 
cusp neighborhoods $E_{i, -}$ in $\Sigma_-$ for $i=1, \dots, k$. 
We denote by $E$ the union of $E_i, i=1, \dots, k$ and $E_-$ the union of $E_{i,-}$ for $i=1, \dots, k$. 

Let $\partial \Ss_+$ denote the boundary of $\Ss_+$ with the boundary orientation.
Then there is a domain $D_i= Z(\tilde a_i)$ in $\Ss_0$ for $i=1, \dots, m$ that is a union of 
maximal geodesic in $\Ss_0$ tangent to $\partial \Ss_+$ in the boundary direction. 
We also have 
$\partial D_i = \tilde a_i \cup \tilde a_{i, -}$. 
Then $\Gamma$ acts properly discontinuously on the union of images of $D_i$ under 
$\Gamma$. On each $\tilde a_i$, there is a unique primitive element $\eta_i$
acting on it in the orientation direction. It also acts on $D_i$. 

We define $\tilde \Sigma := \Ss_+ \cup \Ss_- \cup \bigcup_{i=1}^m \Gamma(D_i)$
where $\Gamma$ acts properly discontinuousy, 
and define $\Sigma := \tilde \Sigma/\Gamma$.  
(See Section 5.1 of \cite{CDG22} and Theorem 5.3 of \cite{CG17}.)
Then $\Sigma$ contains $\Sigma_+$ and $\Sigma_-$ and the complement is 
a union $A$ of annuli $A_i$ which is an image of $D_i$, $i=1, \dots, m$. 

Here, $\Sigma$ is a union of 
\[\Sigma_+, \Sigma_-, \hbox{ and } A_i, i=1, \dots, m.\] 
By \cite{CDG22},  $\Gamma$ acts properly discontinuously and freely on
$\Lspace \cup \tilde \Sigma$ whose quotient is a union of $M$ and a surface $\Sigma$. 
Let $p$ denote the covering map. (See Section 5.1 of \cite{CDG22}.) 



By Section 5.1.1 of \cite{CDG22}, we have a mutually disjoint collection of solid tori 
$T_1, \dots, T_m$ closed in $\hat M_0:= (\Lspace/\Gamma) \cup \Sigma$, 
and a compact $3$-manifold $M_1:= \hat M_0 - \bigcup_{i=1}^m T_i^o$ is a deformation retract of $M_0$ where 
$M_1 \cap T_i$ is an annulus ${\mathfrak{A}}_i, i=1,\dots, k,$ with boundary components 
$\alpha_i^+$ and $\alpha_i^-$ in $\Sigma_+$ and $\Sigma_-$ respectively. 
These have parabolic holonomies. 
Here, $\Sigma_e = \Ss_e/\Gamma$ for $e=+, -$ is a complete hyperbolic surface 
where each cusp has the cusp neighborhood is bounded by $\alpha_i^e$. 
Also, ${\mathfrak{A}}_i$  is a surface ruled by time-like geodesics. (See Section 3.3 and 
Appendix A of \cite{CDG22}.)

Let $\gamma_i$ denote the homotopy class corresponding to $\alpha_i$ acting on $\tilde \alpha_i$ for each $i$. 
In $\Lspace$, ${\mathfrak{A}}_i$ is covered by disks $\tilde {\mathfrak{A}}_i$ with boundary components 
equal to a horocycle in $\Ss_+$ and its antipodal image in $\Ss_-$,
where $\tilde {\mathfrak{A}}_i$ is $\Phi_{i, s}$-invariant for a one-parameter group $\Phi_{i,s}$, $s\in \bR$, of parabolic isometries containing $h(\gamma_i)$. 

Also, $\tilde {\mathfrak{A}}_i$ bounds a $3$-cell $\tilde T_i$ covering $T_i$. 

The inverse image $p^{-1}({\mathfrak{A}}_i)$ is the union of 
a disjoint copy of $\tilde {\mathfrak{A}}_i$ under $\Gamma$, 
and $\partial M_1$ is covered by $(\tilde \Sigma  \setminus E) \cup  
\bigcup_{i=1}^m p^{-1}({\mathfrak{A}}_i)$.

\subsection{Deforming solid tori}\label{sub:deformtori} 
Now, we will try to make these noncompact annulus times a real line 
into compact ones by deforming their boundary
components. 
Now assume $h_\mu$ be in a cone neighborhood $N$ of $h:=\rho$ in $\mathcal{F}_n$
for $\mu \in \Isom(\Lspace)^n$ 
described in Lemma \ref{lem:deformnop}
without parabolic elements. We let $N'$ denote the complement $N$ of 
set of representations with parabolics. 
Here, $\mu$  denote the parameter of $N$ and we assume $\mu \in \bR^{6n}$ and $\mu_0$ is the origin and $N$ corresponds to a ball. 
(Note $\dim \Isom(\Lspace) = 6$.) 
Let $\mu_0$ correspond to $h=\rho$. 
We will choose a sufficiently small neighborhood in $N$ so that the completeness can be 
established. 

We now describe how these solid tori deforms and we compactify these as well
while not concerned about $M_1$ for a while: 

As we deform $\Gamma$, we have changed
$h(\gamma_i)$ to hyperbolic element $h_\mu(\gamma_i)$ for each $i=1, \dots, k$, 
$x\in N$.  We denote by $\Gamma_\mu$ the image of $h_\mu$ in $N$. 


We first deform $T_i$ bounded by ${\mathfrak{A}}_i$ where
$T_i$ is diffeomrphic to ${\mathfrak{A}}_i\times \bR$. 

Define 
$\hat T_i :=(\Lspace \cup \Ss_+\cup \Ss_-)/\langle \rho(\gamma_i)\rangle$ where $T_i$ embeds
as a submanifold  bounded by $\hat {\mathfrak{A}}_i$ that is the image of ${\mathfrak{A}}_i$. 

Here, we need to choose $N$ to be smaller if necessary for the argument to work. 

We define $\hat T_i(\mu):= (\Lspace \cup \Ss_+\cup \Ss_-)/\langle h_\mu(\gamma_i)\rangle$. 
We can choose a compact neighborhoods $N_i$ of ${\mathfrak{A}}_i$ in $\hat T_i$ 
and a compact neighborhood $N_i(\mu)$ of ${\mathfrak{A}}_i(\mu)$ in $\hat T_i(\mu)$. 
There is a diffeomorphism $f_i(\mu): N_i \ra N_i(\mu)$ 
lifting to a diffeomorphism $\tilde f_i(\mu): \tilde N_i \ra \tilde N_i(\mu)$. 
We may assume that $\tilde f_i(\mu) \ra \Idd$ in the $C^r$-topology for $r \geq 2$: 
We construct $\tilde f_i(\mu)$ as a parameter of immersions 
as in  the proof of Thurston and Morgan 
written-up by Lok \cite{Lok84} by considering covering 
$(\Lspace \cup \Ss_+\cup \Ss_-)$
by precompact balls and defining smooth isotopies 
starting from the compact sets in the intersections of maximal number of balls. 
We can choose $\tilde f_i(\mu)$ so that there is $C^{r}, r \geq 2$, convergence as 
$\mu \ra \mu_0$. 
Since $N_i$ is compact, it is covered by finitely many precompact balls $B_{i, k}, k=1, \dots, l_i$ for some $l_i$ where  $f_i(\mu)$ is an embedding:
Since we are working with a regular neighborhood of a compact annulus, 
we may use only two precompact balls intersecting at balls. 
We can use Theorem 1.5.3 and Remark 1.5.5 in \cite{CEG06} by identying $N(\mu)$
to show that 
the induced map $f_i(\mu): N_i \ra N_i(\mu)$ is an embedding 
for $\mu$ sufficiently close to  $\mu_0$ where we may need to choose smaller $N_i$. 
Hence, we will choose $N_i(\mu)$ as the image of $f_i(\mu)$ for sufficiently small $t$. 

Since $\tilde f_i(\mu)$ lifts $f_i(\mu)$, we have 
$h_\mu(\gamma_i) \circ \tilde f_i(\mu) = \tilde f_i(\mu) \circ h(\gamma_i)$. 
We define $\tilde {\mathfrak{A}}_i(\mu)$ as the image $\tilde f_i(\mu)(\tilde {\mathfrak{A}}_i)$. 
Since $\tilde {\mathfrak{A}}_i(\mu)$ covers a compact annulus ${\mathfrak{A}}_i(\mu)$ in $\hat T_i(\mu)$, 
$\tilde {\mathfrak{A}}_i(\mu)$ is properly embedded in $\Lspace \cup \Ss_+ \cup \Ss_-$
as we can see from the action of $h_\mu(\gamma_i)$. 
Now, $\langle h_\mu(\gamma_i) \rangle$ acts freely and properly discontinuously 
on $\tilde {\mathfrak{A}}_i(\mu)$. 

The arc $\Ss_e \cap \tilde {\mathfrak{A}}_i(\mu)$, $e=+, -$ is an arc $\alpha_i^e(\mu)$ that bound a convex disk 
$\tilde E_i^e(\mu) \subset \Ss_e$ for $i=1, \dots, k$. 
The boundary $\tilde E_i^e(\mu)$ is a disjoint union of $\alpha_i^e(\mu)$ and 
an arc $\beta_i^e(\mu)$ in $\partial \Ss_e$.

We construct the region $\tilde F_j(\mu):= Z(\beta_j^{e, o}(\mu))$ in $\Ss_0$.
We define 
\[\tilde \Sigma_\mu:= \Ss_+ \cup \Ss_- \cup \bigcup_i \Gamma_\mu Z(a_i(\mu)) \cup 
\bigcup_i \Gamma_\mu\tilde F_i(\mu) \subset \SI^3.\]
$\Gamma_\mu$ acts properly discontinuously on it by Theorem 5.3 of \cite{CG17}. 
We define $\Sigma_\mu:= \tilde \Sigma_\mu/\Gamma_\mu$. 
Here, we need $|\mu| < \eps_0$ for some constant $\eps_0 >0$ for this argument to work. 



We do not yet know that $\Gamma_\mu$ acts properly and freely on $\Lspace$, 
and we need to construct some compact $3$-manifold. 


By Theorem  5.3 of \cite{CG17}, 
the  closed surface $\Sigma_\mu$ is a union of 
\begin{itemize} 
\item two complete hyperbolic 
surfaces $\Sigma_{+, \mu}:= \Ss_+/\Gamma_\mu$ and
\item  $\Sigma_{-, \mu}:= \Ss_-/\Gamma_\mu$ and 
\item Annuli $A_{i, \mu}$, 
images of $Z(a_i(\mu))$, $i=1, \dots, m$. 
\item The annular images  of strips 
$\tilde F_i(\mu)$  which we denote by $F_{i, \mu}$, $i=1, \dots, k$.  (``opened-up ones''.) 
\end{itemize} 
Here $\partial \tilde D_i(\mu) = a_i(\mu) \cup a_{i}(\mu)_-$ which covers the union of 
a boundary component of 
the closure of $\Sigma_{+, \mu}$ and that of $\Sigma_{-, \mu}$. 
$\partial \tilde F_i(\mu)$ also covers the boundary components of these, 
which were opened up from cusps.  


Then the closure of $\tilde {\mathfrak{A}}_i(\mu)$ in $\Lspace \cup \Sigma_\mu$ bound
a $3$-cell $E_i(\mu)$ so that $T_i(\mu):= E_i(\mu)/\langle h_\mu(\gamma_i)\rangle$ is a compact solid torus. 
The boundary of $T_i(\mu)$  is a union of four annuli ${\mathfrak{A}}_i(\mu):= \tilde {\mathfrak{A}}_i(\mu)/\langle h_\mu(\gamma_i)\rangle $, 
$E_i^e:= \tilde E_i^e/\langle h_\mu(\gamma_i)\rangle$, 
and $D_i(\mu):= \tilde D_i(\mu)/\langle h_\mu(\gamma_i)\rangle$. 


\subsection{Deforming the compact part} \label{sub:deform}

Now, we deform $M_1$. We do this by deforming ${\mathfrak{A}}_i$ by 
${\mathfrak{A}}_i(\mu)$. As we deform $\Gamma_\mu$, we obtain the deformation of 
the real projective structure on $M_1$ by first deforming the boundary ${\mathfrak{A}}_i$
to an annulus ${\mathfrak{A}}'_i(\mu)$ for each $i$. 

Again, we need to choose smaller $N$ if necessary for the following argument to work. 

We deform by $\tilde f_i(\mu)$ to $\tilde {\mathfrak{A}}_i$ but we will apply to the one-sided neighborhood 
$\tilde N_i$ of $\tilde {\mathfrak{A}}_i$ inside $\tilde M_1$ as follows: 


We can consider $\Isom(\Lspace) $ as a subgroup of $\PGL(4, \bR)$ acting on $\SI^3$ projectively. 
We construct a flat $\SI^3$-bundle over $M_1$ by the product
$\tilde M_1\times \SI^3$ and taking the quotient by the diagonal action of $\Gamma$.
It has an obvious flat connection by mapping leaves $\tilde M \times x$ for $x \in \SI^3$ for all $x$. 
The geometric structure based on $(\SI^3, \PGL(4, \bR))$ is given 
by a section transversal to the flat connection. 

At $\mu=0$, we have a transversal section. For sufficiently small $\mu$, we 
can still obtain the deformed $M_1$ with holonomy $h_\mu$ and 
the boundary deforming $\Sigma$ and ${\mathfrak{A}}_i$.  
We deform first for ${\mathfrak{A}}_1$ to become ${\mathfrak{A}}_1(\mu)$ while deforming  
$\Sigma - E^o$ inside $\Sigma_\mu$ only and we use 
the partition of unity to extend the transversal section everywhere on $M_1$. 
(See Goldman \cite{G88} for detail).  
Here, we need to take smaller $N$ if necessary. 


We call the result $M_1(\mu)$. Now, $M_1(\mu)$ may not necessarily be a quotient of a domain 
in $\clo(\Lspace)$ unlike $M_1$.  However, this will follow later.


Now, we glue $T_i(\mu)$ to $M_1(\mu)$ by identifying ${\mathfrak{A}}_i(\mu)$ and $A'_i(\mu)$ in an obvious manner
for each $i=1,\dots, m$. 
The result is a manifold $M_2(\mu)$.  It is compact since so is $M_1(\mu)$ and $T_i(\mu)$.
Also, we can verify that $\partial M_2(\mu) = \Sigma_\mu$.


\subsection{Completeness using the argument of Carri\`ere.} \label{sub:Carriere} 

We will show $M_2(\mu)^o$ is covered by $\Lspace$. 
To do that we will prove that $M_2(\mu)^o$ is convex. 

Since $M_2(\mu)$ is compact, $\tilde M_2(\mu)$ has a compact fundamental domain 
$F(\mu)$. 

For now, $\dev_\mu: \tilde M_2(\mu) \ra \clo(\Lspace)$ is a smooth immersion. 
Now, $\dev_\mu|\tilde M_2(\mu)^o \ra \Lspace$ is one also. 

Again, using the theory of \cite{psconv}, we obtain a Kuiper completion 
$(\tilde M_2(\mu)^o)\che{}$ where we complete the path-metric induced from the pulled-back Riemannian metric of the Euclidean metric of $\Lspace$. 
We denote by $\delta_\infty \tilde M_2(\mu)^o$ the set of ideal points. 
Also, the developing map extends to $(\tilde M_2(\mu)^o)\che{}$ which we denote by 
$\dev_\mu$. Also, $\dev_\mu(\delta_\infty \tilde M_2(\mu)^o)$ is in $\Lspace$. 

From here on, we fix $\mu$. 
Suppose  that $\tilde M_2(\mu)^o$ is not convex. 
Then there exists a triangle $T$ in $(\tilde M_2(\mu)^o)\che{}$ 
with vertices and two edges in $\tilde M_2(\mu)^o$
and the interior of
 one edge $E_v$ meeting $\delta_\infty \tilde M_2(\mu)^o$
with the opposite vertex $v$. 
The two other edges are disjoint from $\delta_\infty \tilde M_2(\mu)^o$. 
(See for example \cite{CG93} where we have a slightly different metric but the arguments 
are the same.)


Consider the open domain $D_v$ in $\tilde M_2(\mu)^o$,
 which can be reached by a segment from $v$. 
Then $\dev_\mu| D_v$ is an embedding to $\Lspace$. 
$T$ is in the closure $\clo(D_v)$. 
By a perturbation of $T$ inside $\clo(D_v)$ as in  Proposition 4.3 of \cite{psconv}, 
we can assume that there exists a triangle $T'$ which meets 
$\delta_\infty \tilde M_2(\mu)^o$. 
We may assume that this is a single point only at an interior point of an edge say $E'_v$
by perturbing the edge $E_v$ in $\clo(D_v)$. 
Now, we let $T$ be this $T'$ and $E_v$ be this $E'_v$ by renaming.

Let $l$ be a maximal segment in $E_v$ ending at this ideal point. 
Then $p_t: l\ra M_2(\mu)$ goes to an infinite length arc. 
Hence, there is a sequence $p_i\in l$ and $g_i \in \pi_1(M)$ so that 
$g_i(p_i) \in F(x)$ where $g_i$ form an unbounded sequence.

Let $F_2(\mu)$ denote the finite compact union of images of $F(\mu)$ adjacent to $F(\mu)$. 
There is a lower bound $\eps >0$ for any path from $F(\mu) \cap \tilde M_2(\mu)^o$ 
to $\tilde M_2^o(\mu) \setminus F_2(\mu)^o$ 
with respect to the induced Euclidean metric $d_E$ on $\tilde M_2(\mu)$: 
This follows since there is a lower bound on the induced spherical metric on 
$\tilde M_2(\mu)$ form $F(\mu)$ to $\tilde M_2 \setminus F_2(\mu)^o$ since one is compact 
and the other is a disjoint closed set on $\tilde M_2(\mu)$ with induced spherical metric.  
On $\Lspace$, the Euclidean metric strictly dominates the spherical metric. 

Now $g_i(p_i)$ is contained in a Euclidean ball  $E_i$ in $F_2(\mu) \cap \tilde M_2(\mu)^o$
as a center $g_i(p_i)$ and radius $\eps$. 

As Carri\`ere \cite{Carr89} showed $g_i^{-1}(E_i)$ is a sequence of ellipsoids with center $p_i$ 
so that the euclidean diameter $\diam_{d_E}(g_i^{-1}(E_i)) > \delta$ for $\delta> 0$. 
This follows since the discompactedness of the linear 
part in the Lorentz group $\SO(2,1)$ in $\Isom(\Lspace)$ is $1$. 
Hence, $g_i^{-1}(E_i)\cap T$ 
contains a segment $s_i$ with an end point $p_i$ of euclidean length at least 
$\delta> 0$. 
Hence, $g_i^{-1}(E_i) \cap T$ contains a point $x_i$ of $d_E(x_i, p_i) =\delta/2$ 
converging to $x \in T \setminus \delta_\infty M_2(\mu)^o$ up to a choice of subsequences 
 by the geometry of the situation.  Since $g_i(x_i)$ is in the compact Euclidean $\delta_2$-neighborhood of $F_i$, 
this is a contradiction to the proper discontinuity of the action of the fundamental group
on $\tilde M_2$. 

We remark that we still have compact fundamental domain and the argument still works
although the fundamental domain is not in $\Lspace$.


Therefore, $M_2(\mu)^o$ is convex and $\dev_\mu|\tilde M_2(\mu)^o$ is an embedding to 
a convex domain $\Omega_\mu$ in $\Lspace$. 
Also, $\Omega_\mu$ contains the domains $\tilde E_i(\mu)$. 
Hence $\clo(\Omega_\mu)$ contains the strips $\tilde {\mathfrak{A}}_i(\mu)$ and its $\Gamma_\mu$-images. 
The set of $\Gamma_\mu$-images of these points is contained in 
$\clo(\Omega_\mu)$. 
Since $\tilde {\mathfrak{A}}_i^o$ and $\tilde A_i^o$ are in $\tilde M_2(\mu)^o$, 
we see that the fixed points of 
$h_\mu(g_i)$ and $h_\mu(\gamma_i)$ in $\partial \Ss_+$ and its antipodes in 
$\partial \Ss_-$ are in the closure of $\Omega_\mu= \dev_\mu(\tilde M_2^o)$. 
Also, the fixed points of its conjugates are in it also. 
The closure of the set of these points is the limit set $\Lambda_\mu$.
From the properties of the Fuchian groups, the convex hull of $\Lambda_\mu$ 
is a convex domain $E_\mu$, and 
so is the convex hull of the limit points in $\partial \Ss_-$ which must equal 
$\mathcal{A}(E_\mu)$. The convex hull in $\clo(\Lspace)$ of
$E_\mu \cup \mathcal{A}(E_\mu)$ is $\clo(\Lspace)$ as we can take the interior point $x$ and its
antipodes and their neighborhoods in $E_\mu$ and $\mathcal{A}(E_\mu)$. 
Since $\clo(\Omega_t)$ is a convex domain in $\clo(\Lspace)$
containing $E_\mu$ and $\mathcal{A}(E_\mu)$, 
the only possibility is that $\clo(\Omega_t) = \clo(\Lspace)$. 
Hence, $\Omega_\mu =\Lspace$. 



\subsection{General case of $\Isom(\Lspace)$.} \label{sub:generalcase} 

We will now consider the Margulis space-times with parabolics where 
$\mathcal{L}(\Gamma)$ is not in $\SO(2, 1)^+$. 
Again, we have a representation $h: F_n = \pi_1(M) \ra \Isom(\Lspace)$ 
whose image is $\Gamma$. 
We can consider $F_n$ as $\langle G, g\rangle$ where 
$h(G) \subset \Isom^+(\Lspace)$ and $g$ is not in $G$ for a subgroup $G$ of index $2$ in $F_n$
acting on $\Ss_+$. 
Note there are deformations $h_\mu$ giving us 
$h_\mu|G$ and $h_\mu(g)$ for $t\in [0, 1]$. 
Now, $\Lspace/G$ covers $\Lspace/\Gamma$ by $2$-to-$1$ map with 
a deck transformation group generated by a map induced by $g$ 
isomorphic to $\bZ_2$. 
We denote by $G_\mu$ the group deformed from $G$.
Since $G$ is a free group of rank $\geq 2$, we can apply the results of the above sections
to $M_1:=\Lspace/G$ to obtain a deformation 
$M_1(\mu):=\Lspace/G_\mu$ without parabolics.  

Now, $h(g)$ induces a $\bZ_2$-group action on $M$ without fixed points. 
Let $\hat g$ denote the generator of this action.
We can choose the parabolic torus $T_1, \dots, T_k$
so that each one is sent to another one since $\hat g$ 
does not preserve a parabolic end of $\Sigma_+$
by Section 5.1 of \cite{CDG22}.  

Let  $T:=T_1\cup \dots \cup T_n$. 
We choose $T_i(\mu)$ as in Section \ref{sub:deformtori}. 
We let $T(\mu):= T_1(\mu) \cup \dots \cup T_n(\mu)$. 

Hence, $\hat g$ acts on a compact set $M_1$ freely as well. 
$h_\mu(g)$ still induces a $\bZ_2$-group action on 
a compact manifold $M_1(\mu) \setminus T(\mu)^o$ 
for some choices of $T_i(\mu)$ near $T_i$ deformed 
from ones in the above section. For example, we can choose half of these and let others be the images under the induced map of $h_\mu(g)$ since the action on $T_1,\dots, T_k$ 
does not preserve any components. 


If there is a fixed point of the $\bZ_2$-action on a compact manifold $M_1(\mu) \setminus T(\mu)^o$,
then there must be a fixed point of $h_\mu(g')$ on the universal cover of 
$M_1(\mu) \setminus T(\mu)^o$ contained in $\Ss_+ \cup \Ss_- \cup \Lspace$
for $g'' =g g'$ for $g'\in \pi_1(M)$.
This is a contradiction since $h_\mu(\pi_1(M))$ acts freely there. 

Since this action also sends each $T_i(\mu)$ to a different one, and 
the $\bZ_2$-action on $M_1(\mu)\setminus T(\mu)^o$ is free, it follows that 
the $\bZ_2$-action on $M_1(\mu)$ is free also. 
Hence, we obtained a deformation from $\Lspace/\Gamma$ to 
$\Lspace/\Gamma_\mu$ for sufficiently small $t$. 

This completes the proof of Theorem \ref{thm:main}. 

%

\section{Crooked plane decomposition} \label{sec:crookedplanes} 

Here, we directly work with representations with linear parts in
 $\SO(2, 1)$. 

\subsection{Disjointness of crooked planes} \label{sub:disj} 

A pair of space-like vectors $\vecu_1, \vecu_2 \in \bR^{2, 1}$
is said to be {\em crossing} (resp. {\em ultraparallel}, {\em asymptotic}) 
if $\vecu_1^\perp  \cap  \vecu_2^\perp$ is time-like (resp. space-like, null). 
Alternatively, 
\[
(\vecu_1 \cdot \vecu_2)^2 - (\vecu_1 \cdot \vecu_1)(\vecu_2 \cdot \vecu_2) < 0, 
\hbox{( resp. } >0, =0). 
\]
A {\em noncrossing pair} is one that is either ultraparallel or asymptotic. 
We say that $\vecu_1, \vecu_2 \in \bR^{2, 1}$ are {\em consistently oriented } 
if $\vecu_1 \cdot \vecu_2 < 0, \vecu_i \cdot \vecu_j^{\pm} \leq 0$ for $i, j = 1, 2$. 
Up to multiplication by $-\Idd$ to one or two of space-like non-crossing $\vecu_i, i=1, 2$, 
we can make $\vecu_1$ and $\vecu_2$ be consistently oriented.
(See \cite{CK14} for details.)

Also, note that $\mathcal{C}(p, \vecu) = \mathcal{C}(p, -\vecu)$. 

We define 
\[D(\vecu) := \{ a \vecu^- - b \vecu^+| a, b \geq 0\} -\{0\}.\]

Given a set $A \in \bR^3$, we denote by 
$-A$ the set $\{-a| a \in A\}$. 
In the following, $A - B$ for subsets $A, B \in \bR^3$ means the 
set $\{ x-y | x \in A, y\in B\}$, which is just a convex hull of 
the set $A$ and $-B$ if $A$ and $B$ are convex cones. 
\begin{theorem}[Drumm-Goldman \cite{DG99}]\label{thm:disj}  
Suppose that $\vecu_1, \vecu_2$ are nonparallel, 
consistently oriented, noncrossing space-like vectors.
Then the crooked planes $\mathcal{C}(p_1, \vecu_1)$ and 
$\mathcal{C}(p_2, \vecu_2)$ are disjoint if and only if 
$p_2 - p_1 \in \mathrm{int}(D(\vecu_2) - D(\vecu_1))$. 
\end{theorem}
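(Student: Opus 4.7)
I would begin by translating coordinates so that $p_1 = 0$ and setting $p := p_2 - p_1$. Disjointness of the two crooked planes is then equivalent to $p \notin \mathcal{C}(0, \vecu_1) - \mathcal{C}(0, \vecu_2)$, and the theorem reduces to the set identity
$$\bR^{2,1} \setminus \bigl(\mathcal{C}(0, \vecu_1) - \mathcal{C}(0, \vecu_2)\bigr) \;=\; \mathrm{int}\bigl(D(\vecu_2) - D(\vecu_1)\bigr).$$
Each crooked plane $\mathcal{C}(0, \vecu)$ decomposes into five elementary pieces: the two quadrants $\pm D(\vecu)$ of the stem inside $\vecu^\perp$, the two wings $W(\vecu^\pm)$ lying in the null planes $(\vecu^\pm)^\perp$, and the origin. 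The Minkowski difference on the left-hand side is therefore a finite union of Minkowski sums of pairs of such elementary pieces.

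For the forward direction, I would analyse each elementary sum in turn, exhibiting in each case a closed affine half-space that contains the sum and misses $\mathrm{int}(D(\vecu_2) - D(\vecu_1))$. The noncrossing hypothesis controls the signature of $\vecu_1^\perp \cap \vecu_2^\perp$ and thus restricts how the two stems can meet, while consistent orientation fixes the signs of the relevant Lorentzian inner products so that the only ``backward-pointing'' stem--stem quadrant is $D(\vecu_1) - D(\vecu_2)$, the antipode of the conjectured interior.

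For the converse direction, I would argue by connectedness. The disjointness locus in the parameter $p$ is open by continuous dependence of the elementary pieces on their data, and a point deep inside $\mathrm{int}(D(\vecu_2) - D(\vecu_1))$ --- for example a large positive combination of $\vecu_2^-$, $-\vecu_2^+$, $\vecu_1^+$, and $-\vecu_1^-$ --- can be verified by hand to produce visibly disjoint crooked planes. Since the forward direction identifies $\mathrm{int}(D(\vecu_2) - D(\vecu_1))$ as a connected component of the complement of $\mathcal{C}(0, \vecu_1) - \mathcal{C}(0, \vecu_2)$, any other $p$ giving disjoint planes must lie in the same component.

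The main obstacle is the forward case analysis: stem--wing and wing--wing sums are half-spaces bounded by null planes, and in the asymptotic case, where $\vecu_1$ and $\vecu_2$ share a null direction, these half-spaces degenerate in ways that require delicate tracking --- this is precisely why the interior, rather than the closure, appears in the statement.
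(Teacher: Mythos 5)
The paper does not prove this statement at all: it is quoted verbatim from Drumm--Goldman \cite{DG99} and used as a black box, so there is no internal proof to compare against. Judged on its own terms, your reduction to the set identity $\bR^{2,1}\setminus\bigl(\mathcal{C}(0,\vecu_1)-\mathcal{C}(0,\vecu_2)\bigr)=\mathrm{int}\bigl(D(\vecu_2)-D(\vecu_1)\bigr)$ is correct and is the right starting point, but the sketch has two genuine problems. First, you misidentify the elementary pieces: $\pm D(\vecu)$ are \emph{not} the two cones of the stem. Writing $\vecx=a\vecu^-+b\vecu^+$ in $\vecu^\perp$, one has $\vecx\cdot\vecx=2ab\,(\vecu^-\cdot\vecu^+)$ with $\vecu^-\cdot\vecu^+<0$, so the stem $\{\vecx\cdot\vecx\le 0\}$ is the pair of quadrants $\{a,b\ge 0\}\cup\{a,b\le 0\}$, whereas $D(\vecu)=\{a\vecu^--b\vecu^+ : a,b\ge 0\}$ is a quadrant of \emph{spacelike} vectors (the ``stem quadrant'' in the sense of \cite{CK14}, i.e.\ the cone of translations pushing the crooked plane off itself, not a subset of it). Your five-piece decomposition and the ensuing half-space analysis are therefore set up on the wrong sets, and the case analysis would have to be redone with the causal cones as the stem pieces and $D(\vecu)$ appearing only in the target cone.

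Second, the converse direction does not follow from connectedness. Showing that each elementary Minkowski sum misses $\mathrm{int}\bigl(D(\vecu_2)-D(\vecu_1)\bigr)$ gives only the inclusion $\mathrm{int}\bigl(D(\vecu_2)-D(\vecu_1)\bigr)\subset\bR^{2,1}\setminus\bigl(\mathcal{C}(0,\vecu_1)-\mathcal{C}(0,\vecu_2)\bigr)$, i.e.\ the ``if'' direction. Even granting that this open cone is a full connected component of the complement, nothing in your argument excludes \emph{other} components of the complement, and a point in such a component would also yield disjoint crooked planes, contradicting the ``only if'' direction. What is actually needed --- and what constitutes most of the work in \cite{DG99} --- is the reverse inclusion: every $p\notin\mathrm{int}\bigl(D(\vecu_2)-D(\vecu_1)\bigr)$ is realized as a difference of a point of $\mathcal{C}(0,\vecu_1)$ and a point of $\mathcal{C}(0,\vecu_2)$, which requires showing that the stem--stem, stem--wing and wing--wing sums jointly cover the complement of that cone (with the asymptotic case, where a null direction is shared, handled separately). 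A pointwise covering argument, not a connectedness argument, is required there.
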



Any pair of immersed crooked planes in $\Lspace/\Gamma$ that always lift to a quasi-disjoint pair of crooked planes are also said to be {\em quasi-disjoint}.
In particular, this is true if the two are respective Chabauty limits of sequences of crooked planes that are pairwise disjoint.


Note that a crooked plane separates $\Lspace$ into two components since it is a properly 
imbedded disk. 
A subset $C$ in $\Lspace$ is in the closure of a component $A$ of 
$\Lspace \setminus \mathcal{C}(p_1, \vecu_1)$ in 
the {\em direction of} $\vecu_1$ (resp. $-\vecu_1$) if 
the component $A$ is one given in the direction of $\vecu_1$ (resp. $-\vecu_1$) from 
an interior point of the upper cone of $\mathcal{C}(p_1, \vecu_1)$.  

For the following, we allow $\vecu_1$ and $\vecu_2$ to be parallel. 
\begin{proposition}\label{prop:gendisj}  
Suppose that $\vecu_1, \vecu_2$ are consistently oriented, noncrossing space-like vectors. 
Then the following are equivalent\/{\em :} 
\begin{itemize}
\item The crooked planes $\mathcal{C}(p_1, \vecu_1)$ and 
$\mathcal{C}(p_2, \vecu_2)$ are quasi-disjoint.
\item $p_2 - p_1 \in \clo((D(\vecu_2) - D(\vecu_1))$.
\item The closure of a component of $\Lspace \setminus \mathcal{C}(p_i, \vecu_i)$
contains $\mathcal{C}(p_{j}, \vecu_{j})$ for $i\ne j, i, j = 1, 2$.
\end{itemize} 
\end{proposition}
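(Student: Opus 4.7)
\medskip

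\noindent\textbf{Proof plan.} The strategy is to leverage the Drumm-Goldman disjointness criterion (Theorem \ref{thm:disj}) as a reservoir of pairwise disjoint crooked planes, and then pass to Chabauty limits to obtain the boundary case. I would prove the chain $(1)\Rightarrow(2)\Rightarrow(3)\Rightarrow(1)$, handling the nonparallel case first and then using a rotation of one space-like vector as a limiting argument to cover the parallel case.

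\smallskip

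For $(1)\Rightarrow(2)$, first assume $\vecu_1, \vecu_2$ are nonparallel. By definition, there exist sequences $\mathcal{C}(p_1^{(n)},\vecw_1^{(n)})$ and $\mathcal{C}(p_2^{(n)},\vecw_2^{(n)})$ that are pairwise disjoint with $p_i^{(n)}\to p_i$ and $\vecw_i^{(n)}\to \vecu_i$; after truncating we may assume each approximating pair is nonparallel and consistently oriented, so Theorem \ref{thm:disj} yields $p_2^{(n)}-p_1^{(n)}\in\inte\Paren{D(\vecw_2^{(n)})-D(\vecw_1^{(n)})}$. Since $D(\vecu)$ depends continuously on $\vecu$ in the Chabauty topology (one checks that $\vecu^\pm$ varies continuously), Lemma \ref{lem:convS} gives $p_2-p_1\in\clo\Paren{D(\vecu_2)-D(\vecu_1)}$. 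When $\vecu_1$ and $\vecu_2$ are parallel, the same Chabauty-limit argument applies, using that the disjointness inclusion is preserved under taking limits of the ambient cones.

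\smallskip

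For $(2)\Rightarrow(3)$, the geometric picture is that $p+D(\vecu)$ is exactly the closure of the translation directions along which $\mathcal{C}(p,\vecu)$ can be shifted while remaining within the closure of the component of $\Lspace\setminus\mathcal{C}(p,\vecu)$ chosen in the direction of $\vecu$ (this is essentially the ``stem quadrant'' description underlying Drumm-Goldman). Thus $p_2-p_1\in\clo(D(\vecu_2)-D(\vecu_1))$ writes $p_2-p_1 = d_2 - d_1$ with $d_i\in \clo D(\vecu_i)$, so $p_1+d_1 = p_2-d_2$ lies in the intersection of the appropriate closed half-spaces on either side; an elementary verification (using the stem/wing decomposition of $\mathcal{C}(p_i,\vecu_i)$ and the fact that $D(\vecu_i)$ is a closed 2-dimensional cone in $\vecu_i^\perp$) shows that the two crooked planes lie on opposite sides, establishing (3). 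The converse $(3)\Rightarrow(1)$ is then immediate: if $\mathcal{C}(p_2,\vecu_2)$ lies in a closed half-space bounded by $\mathcal{C}(p_1,\vecu_1)$, I perturb $p_1$ by $-\eps\vecn$ where $\vecn$ is a vector pointing strictly into the relevant open half-space; by openness of the Drumm-Goldman condition, $\mathcal{C}(p_1-\eps\vecn,\vecu_1)$ and $\mathcal{C}(p_2,\vecu_2)$ are disjoint for all sufficiently small $\eps>0$, and letting $\eps\to 0$ gives the Chabauty convergence.

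\smallskip

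The main obstacle is the parallel case $\vecu_1\parallel \vecu_2$, since then $D(\vecu_2)-D(\vecu_1)$ already has empty interior in $\bR^{2,1}$ and one cannot simply translate $p_1$ to land in an interior guaranteed by Theorem \ref{thm:disj}. To handle this, I would perturb $\vecu_1$ within the space-like cone to a nearby nonparallel $\vecu_1^{(n)}\to \vecu_1$ while simultaneously perturbing $p_1$ so that $p_2-p_1^{(n)}\in\inte\Paren{D(\vecu_2)-D(\vecu_1^{(n)})}$; the existence of such a perturbation follows from the fact that as $\vecu_1^{(n)}$ tilts away from $\vecu_2$ the cone $D(\vecu_2)-D(\vecu_1^{(n)})$ acquires nonempty interior transverse to $\vecu_1^\perp$. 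The remaining routine step is to verify semicontinuity of the cones $D(\vecu)$ and $\clo(D(\vecu_2)-D(\vecu_1))$ under the Chabauty topology, which follows from Lemma \ref{lem:convS} since these are finitely generated closed convex cones.
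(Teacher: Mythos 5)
Your implication $(1)\Rightarrow(2)$ matches the paper's argument (pass to the Chabauty limit in Theorem \ref{thm:disj}), but your step $(3)\Rightarrow(1)$ has a genuine gap. You translate the first crooked plane by $-\eps\vn$ for ``a vector pointing strictly into the relevant open half-space'' and invoke ``openness of the Drumm--Goldman condition.'' A crooked plane does not bound half-spaces, and the set of translations $\vv$ for which $\mathcal{C}(p_1-\vv,\vecu_1)$ becomes disjoint from $\mathcal{C}(p_2,\vecu_2)$ is not a half-space of directions: by Theorem \ref{thm:disj} itself it is exactly the set of $\vv$ with $p_2-p_1+\vv\in\mathrm{int}(D(\vecu_2)-D(\vecu_1))$. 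For a small such $\vv$ to exist you must already know $p_2-p_1\in\clo(D(\vecu_2)-D(\vecu_1))$ and choose $\vv$ inside that cone --- that is, you need item (2), which is not available when proving $(3)\Rightarrow(1)$ inside your cycle $(1)\Rightarrow(2)\Rightarrow(3)\Rightarrow(1)$. A generic transverse translation of a crooked plane drags its wings (null half-planes going off to infinity) across the other plane and will in general create intersections even when $\mathcal{C}(p_2,\vecu_2)$ lies in the closure of one complementary component. The paper avoids this by perturbing the \emph{second} plane instead: it produces $\mathcal{C}(p'_2,\vecu'_2)$ with $\vecu'_2$ ultraparallel to $\vecu_2$ and $p'_2-p_2\in\mathrm{int}(D(\vecu'_2)-D(\vecu_2))$, so the new plane is disjoint from $\mathcal{C}(p_2,\vecu_2)$ and pushed to its far side; hypothesis (3) then places it in the \emph{open} component of $\Lspace\setminus\mathcal{C}(p_1,\vecu_1)$, hence disjoint from $\mathcal{C}(p_1,\vecu_1)$, and letting the perturbation tend to zero gives quasi-disjointness.

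A secondary issue: your $(2)\Rightarrow(3)$ rests on the unproved assertion that $p+\clo(D(\vecu))$ is exactly the set of translates of the origin keeping the crooked plane in the closure of one complementary component (the ``stem quadrant'' picture); this is true but is nowhere established in the paper and is of the same depth as Theorem \ref{thm:disj} itself. The paper sidesteps it by proving $(2)\Rightarrow(1)$ directly --- approximate a boundary point of the closed cone by interior points and apply Theorem \ref{thm:disj} --- and obtaining (3) as a consequence of (1) by taking limits of the separation property of honestly disjoint crooked planes. Restructuring your proof as $(1)\Rightarrow(2)$, $(2)\Rightarrow(1)$, $(1)\Rightarrow(3)$, $(3)\Rightarrow(1)$, with the paper's perturbation of $\vecu_2$ for the last step (and the same ultraparallel perturbation to handle the parallel case, where $D(\vecu_2)-D(\vecu_1)$ has empty interior), closes the gap.
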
 
\begin{proof} 
First, we prove when $\vecu_1$ and $\vecu_2$ are not parallel. 
The first item implies the second and third items by Theorem \ref{thm:disj}  
since we can take the limits. 

The second item implies the first one since from any point of a convex polytope 
$\clo((D(\vecu_2) - D(\vecu_1))$, 
we can find an arbitrarily close point in 
$\mathrm{int}(D(\vecu_2) - D(\vecu_1))$. 
Hence, we can find a sequence of disjoint crooked planes converging to 
the desired ones.

In the situation of the third item, assume $i=1, j= 2$, without the loss of generality.
Suppose that $\mathcal{C}(p_2, \vecu_2)$ is in the closure of a component of 
$\Lspace \setminus \mathcal{C}(p_1, \vecu_1)$ in the direction of $\vecu_1$. 
Let $\vecu'_2$ be a consistently oriented spacelike vector ultraparallel to $\vecu_2$ 
so that $(\vecu'_2)^\perp \cap \Ss_+$ is 
in a component of $\Ss_+ \setminus (\vecu_2)^\perp$ different from that 
of $\Ss_+ \cap (\vecu_1)^\perp$. 
Then we can make a new crooked plane $\mathcal{C}(p'_2, \vecu'_2)$ by 
choosing $p'_2$, $p'_2\ne p_2$, so that $p'_2 - p_2 \in \mathrm{int}(D(\vecu'_2) - D(\vecu_2))$. 
Then $\mathcal{C}(p'_2, \vecu_2')$ is in a component of 
$\Lspace \setminus \mathcal{C}(p_2, \vecu_2)$ in 
the direction of $-\vecu_2$. 
Hence, it is also in the component of  
$\Lspace \setminus \mathcal{C}(p_1, \vecu_1)$ in the direction of $\vecu_1$. 
We can take a sequence of such pairs $(\vecu'_{2, i}, p'_{2, i})$ converging 
to $(\vecu_2, p_2)$ so that 
$p'_{2, i} - p_2 \in \mathrm{int}(D(\vecu'_{2, i}) - D(\vecu_2))$. 
Hence, we obtain the first item. 
%

Suppose now that $\vecu_1 = \pm \vecu_2$.  Then we do the same argument for 
perurbed $\vecu'_2$ so that it is no longer parallel, and we take the limiting facts as 
$\vecu'_2 \ra \vecu_2$. 
\end{proof} 

Given three crooked planes $P_1, P_2$ and $P_3$, we say that 
$P_2$ {\em quasi-separates} $P_1$ and $P_3$ 
if the closure of a component of $\Lspace \setminus P_2$ containes $P_1$ and 
the closure of the other component of $\Lspace \setminus P_2$ contains $P_3$. 

\begin{lemma}\label{lem:quasi-sep} 
	Let $P_i, i=1, 2, 3$ be crooked planes so that $\vecu_i, i=1,2,3,$ are not parallel. 
Suppose that $P_2$ separates $P_1$ and $P_3$.  	
Then we can deform $P_i, i=1, 2, 3$ by continuous parameters 
so that the resulting disks $P'_i, i=1, 2, 3$, are mutually
disjoint and $P'_2$ separates $P'_1$ and $P'_2$.
	\end{lemma}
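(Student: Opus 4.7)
The plan is to keep $P_2$ fixed and translate the origins $p_1, p_3$ by small vectors into the interior of the appropriate Minkowski-difference cones, so that the resulting disks $P'_1, P'_3$ lie strictly inside the two opposite open components of $\Lspace \setminus P_2$. Let $A_1, A_3$ denote the two open components of $\Lspace \setminus P_2$, with $P_i \subset \clo(A_i)$ for $i \in \{1, 3\}$. Applying Proposition \ref{prop:gendisj} to each of the pairs $(P_1, P_2)$ and $(P_3, P_2)$, we obtain
\[ p_2 - p_1 \in \clo\bigl(D(\vecu_2) - D(\vecu_1)\bigr), \qquad p_2 - p_3 \in \clo\bigl(D(\vecu_2) - D(\vecu_3)\bigr), \]
where each convex cone has nonempty interior because $\vecu_1, \vecu_2, \vecu_3$ are pairwise non-parallel. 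The third bullet of Proposition \ref{prop:gendisj}, together with the geometric content of Theorem \ref{thm:disj}, moreover identifies the interior of $D(\vecu_2) - D(\vecu_i)$ as witnessing precisely the side $A_i$ of $P_2$ on which $P_i$ already sits.

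Next I would choose, for each $i \in \{1, 3\}$, a vector $\vecw_i \in \Lspace$ of small norm with $p_2 - (p_i + \vecw_i) \in \mathrm{int}(D(\vecu_2) - D(\vecu_i))$. This is possible since in a convex cone with nonempty interior every closure point is a limit of interior points. Define the continuous deformation $P'_i(t) := \mathcal{C}(p_i + t \vecw_i, \vecu_i)$ for $t \in [0, 1]$ and $i \in \{1, 3\}$, and set $P'_2(t) := P_2$. By Theorem \ref{thm:disj}, $P'_i(t) \cap P_2 = \emptyset$ for every $t \in (0, 1]$. Since each $P'_i(t)$ is a connected topologically embedded disk disjoint from $P_2$, it lies entirely in a single open component of $\Lspace \setminus P_2$; by continuity of the family and the starting inclusion $P_i \subset \clo(A_i)$, that component must be $A_i$ itself.

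Setting $P'_i := P'_i(1)$, the three disks $P'_1, P'_2, P'_3$ are then mutually disjoint (since $P'_1 \subset A_1$ and $P'_3 \subset A_3$ sit in disjoint open subsets of $\Lspace \setminus P'_2$), and $P'_2 = P_2$ clearly still separates $P'_1$ from $P'_3$. The main obstacle is the side-identification step: one must verify that the interior of the cone $D(\vecu_2) - D(\vecu_i)$ coincides with the set of translations carrying the closed half-space containing $P_i$ strictly into $A_i$ rather than into $A_3$. This is a consequence of the explicit structure of the Drumm-Goldman disjointness region and the consistent-orientation convention established before Theorem \ref{thm:disj}, but it must be traced carefully; once it is in hand, the rest is a direct continuity-and-connectedness argument with no further subtleties.
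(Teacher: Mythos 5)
Your proposal is correct and follows essentially the same route as the paper's proof: keep $P_2$ fixed, apply Proposition \ref{prop:gendisj} to the pairs $(P_1,P_2)$ and $(P_3,P_2)$, and push $p_1,p_3$ into the interiors of the respective cones $\clo(D(\vecu_2)-D(\vecu_i))$ so that Theorem \ref{thm:disj} gives strict disjointness. In fact you supply more detail than the paper does, since the paper's proof stops after the perturbation and does not spell out the continuity-and-connectedness check that each $P'_i$ stays on its original side of $P_2$.
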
 
\begin{proof} 
We choose the vectors so that  $\vecu_1, \vecu_2$ to be consistently oriented
and so are $\vecu_2, \vecu_3$. 
We have  by Proposition \ref{prop:gendisj} that   
$p_1 - p_2 \in \clo((D(\vecu_1) - D(\vecu_2))$
and $p_3 - p_2 \in \clo((D(\vecu_3) - D(\vecu_2))$.
Then we can choose $p_i(x)$, $i=1, 3$, so that  $p_i(t), i=1, 3$ is in the interior of the correspondiing polytopes for $t> 0$.  We choose $P_i(t)$ to be given by 
$\mathcal{C}(\vecu_i(x), p_i(t))$ for $t >0$. 
	\end{proof}

\subsection{Crooked planes and the $1$-complex.} \label{sub:genconj} 
We will now prove Proposition \ref{prop:genconj} using the limiting arguments
in this subsection: 
\begin{proposition} \label{prop:genconj}
Let $M:=\Lspace/\Gamma$ be a Margulis space-time with parabolics
for a discrete free  group $\Gamma$ of rank $\geq 2$ in $\Isom(\Lspace)$. 
Let $K$ be a bouquet of circles in $M$ homotopy equivalent to $M$. 
Then there exists a collection of immersed crooked planes in $M$ 
with the following properties\/{\em :} 
\begin{itemize} 
\item Any two crooked planes in the collection are quasi-disjoint from one another. 
\item For each edge of $K$, there exists at least one immersed crooked plane meeting it 
odd number of times. 
\end{itemize} 
\end{proposition}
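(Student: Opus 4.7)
The plan is to obtain the collection as a Chabauty limit of the disjoint short crooked plane decompositions of Danciger--Gu\'eritaud--Kassel \cite{DGK162} along a path of parabolic-free Margulis representations degenerating to the given one.

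By Lemma \ref{lem:deformnop} and Theorem \ref{thm:main}, one chooses an analytic one-parameter family $\rho_s : F_n \to \Isom(\Lspace)$, $s \in [0, 1]$, with $\rho_0 = h$ such that $\Gamma_s := \rho_s(F_n)$ acts properly discontinuously and freely on $\Lspace$ and contains no parabolic elements for $s > 0$. Using the flat $\SI^3$-bundle construction of Section \ref{sub:deform}, the bouquet $K \subset M$ deforms to bouquets $K_s \subset M_s := \Lspace/\Gamma_s$ that represent the same generating set $g_1,\dots,g_n$ of $F_n$. For each $s > 0$, the main theorem of \cite{DGK162} supplies a family $\mathcal{P}_s = \{P_1^{(s)},\dots,P_m^{(s)}\}$ of pairwise disjoint embedded short crooked planes in $M_s$ whose complement is a disjoint union of open cells, with $m$ depending only on the topological type of $M$. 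Because the complement is contractible, any loop in $M_s$ having zero mod-$2$ intersection with every $P_j^{(s)}$ is null-homotopic; since each edge $g_i$ of $K_s$ is nontrivial in $\pi_1(M_s)$, it therefore crosses at least one $P_j^{(s)}$ an odd number of times.

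Next, choose $\Gamma_s$-equivariant lifts $\tilde P_j^{(s)} \subset \Lspace$ with origins in the compact fundamental domain produced in Section \ref{sub:Carriere}. The shortness of $P_j^{(s)}$ (finite geodesic length in $\Sigma_{+, s}$ outside the cusp and boundary-parallel neighborhoods) keeps $\tilde P_j^{(s)}$ in a uniformly compact subset of $\mathcal{CR}(\Lspace)$ meeting a fixed compact subset of $\Lspace$. Chabauty compactness (Appendix \ref{app:metricspace}) together with Lemma \ref{lem:conv} then extracts, along a subsequence $s_k \to 0$, limits $\tilde P_j^{(0)}$ that are crooked planes in $\Lspace$; by Proposition \ref{prop:gendisj} they are pairwise quasi-disjoint, and a diagonal argument over every $\Gamma$-translate promotes this to a $\Gamma$-equivariant quasi-disjoint family. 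Its quotient in $M$ is the proposed collection of immersed crooked planes.

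The main obstacle is preserving the odd-intersection property through the limit: an intersection point of $g_i$ with $P_j^{(s)}$ could escape into a forming cusp of $\Sigma_{+, s_k}$ as $s_k \to 0$, or two intersections could coalesce into a tangency, either of which could flip the parity. The remedy is to isotope $K$ so that it avoids fixed neighborhoods of the parabolic ends at every scale and meets the limiting surfaces $P_j^{(0)}$ transversally, and then to invoke the fact that the mod-$2$ intersection number of a closed loop with an immersed surface is locally constant under isotopies respecting transversality and under small Chabauty perturbations of the surface. Matching the isotopy classes of $K_s$ and $K$ through the deformation then transports the odd-crossing statement from $K_{s_k}$ back to $K$.
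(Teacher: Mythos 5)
Your overall strategy---degenerate the Danciger--Gu\'eritaud--Kassel disjoint crooked plane decomposition along a path of parabolic-free proper deformations and take Chabauty limits---is exactly the paper's strategy, and your treatment of the odd-intersection property at $s>0$ (via the handlebody intersection pairing / contractibility of the complement) matches Lemma \ref{lem:essential}. However, there is a genuine gap at the central step: you assert that ``Chabauty compactness together with Lemma \ref{lem:conv} extracts limits $\tilde P_j^{(0)}$ that are crooked planes,'' and that shortness keeps the lifts ``in a uniformly compact subset of $\mathcal{CR}(\Lspace)$.'' Neither claim is justified, and the second begs the question. The set of crooked planes meeting a fixed compact set is \emph{not} closed in the Chabauty topology: Lemma \ref{lem:divcrook} shows a sequence in $\mathcal{CR}(\Lspace)_K$ can degenerate to a complete time-like plane, a complete null plane, a strongly null half-plane (possibly homologically doubled), or a union of a time-like half-plane and a strongly null half-plane. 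Concretely, as $s\to 0$ a cusp forms and the geodesic $s(P_j^{(s)})\subset\Ss_+$ can have both endpoints collapse to the nascent parabolic fixed point, or the stem cones can flatten; convergence of the origins does not prevent this. Ruling these degenerations out is the actual content of the paper's proof of Proposition \ref{prop:genconj}, and it is not automatic from shortness.

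The paper excludes each degenerate limit by exploiting equivariance and the very properties you are trying to establish: a limiting complete plane (or half-plane union) would meet one of its own $\Gamma$-translates transversally at flat points, contradicting Lemma \ref{lem:geolimdisk} (Chabauty limits of pairwise disjoint crooked planes admit no transversal intersections at flat points); the half-plane-union case needs an additional ``lune'' argument on $\partial\mathcal{H}$; and the homologically doubled null half-plane is excluded precisely because it would meet $K_t$ an \emph{even} number of times, contradicting Lemma \ref{lem:essential} (Lemma \ref{lem:homdouble}). Note that in the paper the odd-intersection property is thus an \emph{input} to proving non-degeneration, not merely a conclusion to be ``preserved through the limit'' as in your last paragraph. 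Your proposal needs this entire elimination argument to be a proof; as written, the key assertion is unsupported.
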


Let $\Gamma$ be a Margulis group in $\Isom(\Lspace)$ of rank $n$. 
We choose a parameter $\Gamma_t, t \in [0, \eps)$ generated by $g_1(t), \dots, g_n(t)$
and without parabolics for $t> 0$ by Theorem \ref{thm:main}. 
For $t=0$, assume that we have a proper action of $\Gamma_t$. 
By Theorem \ref{thm:main}, we know that $\Gamma_t$ acts properly on 
$\Lspace$ for $0 \leq t < \eps$ for some $\eps> 0$. 

By \cite{DGK162}, we know that $\Lspace/\Gamma_t, t>0,$ admis a crooked plane cell decomposition. 
Let $C_1(t), \dots, C_n(t), t>0,$ be the essential collection 
embedded crooked planes in $\Lspace/\Gamma_t$ so that 
the complement of the union is a disjoint union of cells. 

Again, we recall facts from \cite{CDG22}. 
We denote by $\Sigma_{t, +}$ the closure of $\Ss_+/\Gamma_t$ in 
$\Sigma_t$.  Then $\chi(\Sigma_+) = 1 - n = \chi(\Sigma_{+, t})$. 
Note that $\hat M_t:= (\Lspace \cup \Sigma_t)/\Gamma_t, t>0$ is a compact handle body 
of genus $n$ by \cite{CG17}. 
Crooked planes $C_j(t)$ meet the boundary of the handlebody in disjoint collection of 
simple closed curves.

Note that $s(C_i(t)) := C_j(t) \cap \Sigma_{+, t}, j=1, \dots, n, t > 0$ form a disjoint collection of geodesic arcs connecting the boundary components of $\Sigma_{t, +}$. 
Note that $C_j(t)$, $j=1, \dots, n$,
are chosen without regards to continuity with respect to $t$. 

The manifold $\hat M_t:= (\Lspace \cup \Sigma_t)/\Gamma_t$ is a compact handle body. 
The closure of $C_j(t)$ is a disk $\clo(C_j(t))$ with boundary in $\Sigma_t$ for $t> 0$. 

Denote by $\Sigma_{+, t} := \Ss_+ /\Gamma_t$. 
For each $i$, $s(C_i(t)) := \clo(C_i(t)) \cap \Sigma_{+, t}$  
is an embedded arc ending 
at two boundary components of $\partial \Sigma_{+, t}$ in $\Sigma_t$. 
The homotopy class is recorded by 
$[(I, \partial I); (\Sigma_{+, t}, \partial \Sigma_{+, t})], t> 0$.

The inverse image $p_t^{-1}(\bigcup_{j=1}^n C_j(t))$ 
also decomposes $\Lspace$ into $3$-cells. Also note that for any $g\in \Gamma_t$, 
$g\ne \Idd$, $g(C) \ne C$ for any component $C$ here. 

We label these components  $\tilde C_i^j(t)$, $j=1, 2, \dots, \infty$ by distance from the origin
of $\Lspace$. We will denote the closure by $\hat C_i^j(t) \subset \Lspace \cup \tilde \Sigma_t$. 
For $t> 0$, $\hat C_i^j(t) \cap \tilde \Sigma_t$ is homeomorphic to a circle 
that is a union of $\hat C_i^j(t) \cap \Ss_+$ and $\hat C_i^j(x) \cap \Ss_-$ and 
$\zeta(x_1)\cup \zeta(x_2)$ for endpoints $x_1, x_2$ of $\hat C_i^j(x) \cap \Ss_+$. 
(This is called ``crooked circles'' in \cite{CG17}.)

We find a $1$-complex $K_0$ that is the bouquet of geodesic loops which contains the image of $O$ under the projection.  For each $g_i(0)$, we draw a corresponding geodesic $E_i(0)$ connecting $O$ and $g_i(0)(O)$ and projecting. 
Let $\tilde K_0$ be the inverse image containing $O$.
We may suppose that the images of $E_i(0)$ under $p_0$ are embedded loops and are disjoint from one another by choosing the origin carefully. 

We construct $1$-complex $K_t$ by taking $E_i(t)$ to be the segment connecting 
$O$ and $g_i(t)(O)$ in $\Lspace$. 
By taking $\eps$ sufficiently small, for $t< [0, \eps)$, $E_i(t)$ maps to a geodesic loop 
and mutually disjoint for $i=1, \dots, n$ under $p_t$ 
that is embedded in $\Lspace/\Gamma_t$.  

Since $K_t \ra \hat M_t$ induces an isomorphism of the fundamental groups, 
$K_t \ra \hat M_t$ is a homotopy equivalence. 




We say that a set of embedded crooked planes in 
$(\Lspace \cup \tilde \Sigma_t)/\Gamma_t$ is {\em essential} if 
the boundary of each crooked circle is an essential non-separating simple closed curve in 
$\Sigma_t$, and 
the complement of the union is a connected cell. An essential collection does not contain
any separating crooked plane. 

\begin{lemma}\label{lem:essential}  
For the essential collection of crooked planes in 
$M_t:=(\Lspace\cup \tilde \Sigma_t)/\Gamma_t$, each crooked plane meets $K_t$ essentially; that 
is, it cannot be homotopied away. 
Also, each nonseparating crooked plane meets with at least one circle of $K_t$ 
an odd number of times.  
\end{lemma}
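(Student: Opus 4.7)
The plan is to exploit Poincar\'e--Lefschetz duality in the compact handlebody $M_t$. By \cite{CG17}, $M_t$ is a genus-$n$ handlebody, so
\[
H_1(M_t;\mathbb{Z}/2) \;\cong\; H_2(M_t,\partial M_t;\mathbb{Z}/2) \;\cong\; (\mathbb{Z}/2)^n,
\]
and the mod-$2$ intersection pairing between these two groups is nondegenerate. Since the inclusion $K_t \hookrightarrow M_t$ is a homotopy equivalence, the loops $E_1(t),\dots,E_n(t)$ supply a basis of $H_1(M_t;\mathbb{Z}/2)$.

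Next I would verify that the essential crooked planes $C_1(t),\dots,C_n(t)$ form a basis of $H_2(M_t,\partial M_t;\mathbb{Z}/2)$. Each class $[C_i(t)]$ is nonzero because $C_i(t)$ is non-separating in the essential collection (as recorded just before the lemma). For linear independence, observe that since $\bigcup_i C_i(t)$ cuts $M_t$ into a single $3$-cell, the collection is a complete meridian disk system for the handlebody; a standard cellular argument (if $\sum_{i\in S}[C_i(t)]=0$ in $H_2(M_t,\partial M_t;\mathbb{Z}/2)$, then the corresponding subunion would bound a mod-$2$ $3$-chain in $M_t$, but the unique top-dimensional cell in the cut decomposition forbids this for any proper nonempty $S$) yields linear independence, and a dimension count then forces a basis.

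Given these dual bases, nondegeneracy of the intersection pairing implies that for every $i$ there exists some $j$ with
\[
[C_i(t)] \cdot [E_j(t)] \;=\; 1 \,\in\, \mathbb{Z}/2,
\]
i.e.\ $C_i(t)$ meets the $j$-th loop $E_j(t)$ of $K_t$ an odd number of times; this is the second assertion. The first assertion then follows since the mod-$2$ intersection number is a homotopy invariant: nontriviality of the geometric intersection precludes homotopying $C_i(t)$ off $K_t$ inside $M_t$. The main obstacle I anticipate is the middle step, namely extracting linear independence of the classes $[C_i(t)]$ rigorously from the geometric hypothesis that the complement is a single $3$-cell; the remaining steps are routine consequences of duality once the two bases are in hand.
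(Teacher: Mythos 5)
Your proof is correct and follows essentially the same route as the paper: Poincar\'e--Lefschetz duality for the genus-$n$ handlebody $M_t$, the nondegenerate mod-$2$ pairing $H_1(M_t;\bZ_2)\times H_2(M_t,\partial M_t;\bZ_2)\to\bZ_2$, the circles of $K_t$ spanning $H_1$, and nonvanishing of each $[C_i(t)]$ from non-separation. Your extra step establishing that the whole collection is a \emph{basis} of $H_2(M_t,\partial M_t;\bZ_2)$ is not actually needed for the conclusion (nonvanishing of each individual class suffices), but it is harmless.
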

\begin{proof} 
Each circle in the bouquet represents a generator of the fist homology group. 
Now, we use the nondegenerate intersection 
$H_1(M_t;\bZ_2) \times H_2(M_t, \partial M_t;\bZ_2) \ra \bZ_2$ for a handle body $M_t$.  
The $\bZ_2$-intersection number with a non-separating crooked plane gives us a generator of the free abelian group 
$H^1(M_t;\bZ_2) \cong H_2(M_t, \partial M_t;\bZ_2)$. Since each circle in $K_t$ gives us a generator of $H_1(M_t;\bZ_2)$, the final statement holds. 
(See Section VI.11 of \cite{Bredon}.) 
\end{proof}

We may have to rename the lifted crooked planes here: 
Let $\hat K_t = \bigcup_{i=1}^n E_i(t)$.  It contains $O$. 
Since $g_i(t)$ are bounded, $\hat K_t$ is always in a fixed compact subset in $\Lspace$. 
Then $h_t(g(j))(\tilde C_l^j(t))$ 
for some $g(j)$ must meet $\hat K_t$. 
By taking a new choice, we rename one of these closest to $O$ to 
be $\tilde C_i(t)$ for each $t$. 
Then $\tilde C_i(t) \cap \Ss_+$ is a geodesic arc ending at the ideal points of
$\Ss_+$. We denote it by $\tilde s(\tilde C_i(t))$. 
It goes to $s(C_i(t))$ in $\Sigma_{+, t}$. 

A component $C_t$ of 
$\hat M_t - \Gamma_t(\bigcup_{j=1}^n(\tilde C_i(t)))$  
contains $O$. 

Two crooked plane meets {\em transversally} at a point $x$ of an relative interior of 
a stem (resp. or a wing) if the at least one 
tangent plane at $x$  to the stem (resp. a wing) containing $x$ meet transversally with such tangent plane at $x$ of the other one. 

Given a union $D$ of finitely many convex subsets of hyperplanes or affine lines in $\Lspace$, a point $x \in D$ is a {\em flat pointt} of $D$ if there is an open set in  a subspace containing $x$ and contained in $D$.

\begin{lemma} \label{lem:geolimdisk} 
Suppose that we have two sequences $\{D_{1, i}\}$ and $\{D_{2, i}\}$ 
of crooked planes in $\Lspace$ 
so that $D_{1, i} \cap D_{2, i} = \emp$. 
Suppose that $D_{j. i} \ra D_j$ for $j=1, 2$ where $D_j$ is a union of 
finitely many convex subsets of affine subspaces of codimension $\geq 1$. 
Then there can be no transversal intersections of $D_1$ and $D_2$ at any point
that is a flat point of both $D_1$ and $D_2$. 
\end{lemma}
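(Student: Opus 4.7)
The plan is to argue by contradiction via a local perturbation near the alleged transversal intersection. Suppose $x$ is a flat point of both $D_1$ and $D_2$ at which they meet transversally. Since transversality in the sense defined involves two $2$-dimensional tangent planes, the flat point condition provides open $2$-dimensional convex neighborhoods $U_j \subset D_j$ of $x$ lying in affine planes $P_j$, with $P_1 \ne P_2$, so that $P_1 \cap P_2$ is an affine line $\ell$ through $x$.

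First I would track which pieces of the approximating crooked planes give rise to $U_1$ and $U_2$. Each crooked plane $D_{j,i}$ is a union of a bounded number of convex $2$-dimensional pieces (two stem cones and two wings) lying in at most three $2$-planes, so after passing to a subsequence each piece converges individually in the Chabauty topology by Lemmas \ref{lem:convS} and \ref{lem:conv}. Since $U_j \subset D_j$ has nonempty interior in $P_j$, at least one piece $U_{j,i} \subset D_{j,i}$ must converge to a convex set whose relative interior contains $x$; Lemma \ref{lem:conv} then forces the ambient $2$-plane $P_{j,i}$ of $U_{j,i}$ to converge as an affine subspace to $P_j$.

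Next I would exploit the openness of transversal intersection. For $i$ large, the planes $P_{1,i}$ and $P_{2,i}$ still meet transversally in an affine line $\ell_i$ with $\ell_i \to \ell$, and on a small Euclidean ball $B$ around $x$ the sets $U_{j,i} \cap B$ contain an open disk in $P_{j,i}$ approximating $U_j \cap B$ by the Chabauty convergence criterion (Lemma E.1.2 of \cite{BP92}). Hence $\ell_i$ passes through both open disks near $x$, producing a common point in $U_{1,i} \cap U_{2,i} \subset D_{1,i} \cap D_{2,i}$ and contradicting disjointness.

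The main obstacle will be the bookkeeping in the second step: one must verify that the selected pieces $U_{j,i}$ do not degenerate in dimension, so that their ambient planes $P_{j,i}$ are well-defined and actually converge to $P_j$. Such degeneration is ruled out by the flat point hypothesis, since the limit $U_j$ contains an open disk and hence three nearby non-collinear points which must be limits of points of $U_{j,i}$, pinning the plane $P_{j,i}$ down and forcing $P_{j,i} \to P_j$. Once this is in hand, the openness of transversal intersection closes the argument.
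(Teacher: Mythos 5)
Your argument is correct and follows essentially the same route as the paper's proof: pass to a subsequence so that the finitely many convex pieces (wings and stem cones) of $D_{j,i}$ converge individually, use the flat-point hypothesis to pin down converging ambient $2$-planes at the intersection point, and invoke the openness of transversality to force $D_{1,i}\cap D_{2,i}\neq\emptyset$ for large $i$. Your version is in fact somewhat more explicit than the paper's, since you exhibit the intersection point on the line $\ell_i = P_{1,i}\cap P_{2,i}$ rather than just asserting that transverse tangent planes force the crooked planes to meet.
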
 
\begin{proof}
Since the geometric limit does so, any nearby subspaces containing stems or wings must still possess some transversal intersection. 
By taking a subsequence if necessary, we may assume that the sequences of 
cones in stems and the wings of $D_{j, i}$, $j=1, 2$, converges to a convex set in $D_j$. 
 Let $x$ be the intersection point of $D_1$ and $D_2$ in the flat part of $D_1$ and $D_2$.
Let us denote these convex sets by $C_{j, l}$ for $l$ in a finite set $L_j$.
We may assume that the cardinality of $L_j$ is $\leq 4$. 
Then there is a $U_j \subset D_j$ for $j=1, 2$ that is a hyperplane and is a union of finitely many convex sets in $D_j$.  


There exists a subsequence of points $x_i$ in $D_{j, i}$ converging to $x$. (See Section \ref{sec:preliminary}.) We can assume without loss of generality that the tangent planes 
at $x_i$ and for the cones and wings containing it in $D_{j, i}$ or adjacent to  those all 
converges to a single vector space up to a choice of subsequences
since $x$ is a flat point of $D_j, j=1, 2$. 
By the transversality of $D_1, D_2$, the corresponding  tangent planes for $D_{1, i}$ and $D_{2, i}$  are transverse for sufficiently large $i$. 
This means again that the crooked planes $D_{1, i}, D_{2, i}$ meet for some large $I$, a contradiction. 
 

\end{proof} 


%
%

Now we take the sequence $t_i \ra 0$ so that 
$\hat C_{t_i, j}$ converges for each $j= 1,\dots, n$
in the Chabauty topology. 
We denote the limits by $\che C_j$.

 
%



We will show that $\che C_j$ are all crooked planes to 
prove Proposition \ref{prop:genconj}.
We now go over the possibilities of Lemma \ref{lem:divcrook} of $\che C_j$, $j=1, \dots, n$
to eliminate the other possibilities. 

Suppose that at least one $\che C_j$ contains a disk $D$ 
a complete null plane or a complete time-like plane.
Since any two planes which are not parallel must intersect transversally, $2$-dimensional
flat subsets of $g(\che C_j)$ intersects some $2$-dimensional flat subsets of $\che C_j$ transversally for some nontrivial $g\in \Gamma$. 
This cannot happen by Lemma \ref{lem:geolimdisk}.  

Suppose that $\che C_j$ is a union of null half-plane with a time-like half-plane meeting at a null-line. The closure of $\che C_j$ meets $\partial \Lspace$  is 
a union of a great segment of form 
$\zeta(x)$ for $x\in \partial \Ss_+$ and a great segment transversal to $\partial \Ss_+$ ending at $x$ and $x_-$ respectively. 
We call the former one $n_j$ a {\em null great segment}, and 
 the later one $t_j$ a {\em time-like great segment}.
Let $l$ denote the intersection of this with $\Ss_+$. 
The union of  two segments bounds a lune $L_i$ in $\Ss$. 

By geometry, 
one of the following holds for $\gamma \in \Gamma - \{\Idd\}$ not fixing a vertex of $L_i$: 
\begin{itemize}
\item $\gamma(n_i) \cap n_i \ne \emp$.
\item $\gamma(t_i) \cap t_i \ne \emp$. 
\end{itemize} 
Thre are many such $\gamma$s. 
  
The boundary of the corresponding lune must intersect transversally in interiors of 
the segment since a lune cannot be a subset of another lune unless the vertices coincide. 

Since these great segments are in the boundary components 
of some flat disks in $\Lspace$, 
this implies that $\che C_j$ intersects its image transversally in flat points. 
This is a contradiction by Lemma \ref{lem:geolimdisk} again.

%
%

The next possibility for $\che C_j$ is also ruled out below: 

\begin{lemma} \label{lem:homdouble} 
A component of $\che C_j$ cannot be a homologically doubled null half-plane.
\end{lemma}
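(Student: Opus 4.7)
The plan is to imitate the template from the two preceding cases in this subsection: derive a contradiction from Lemma~\ref{lem:geolimdisk} by finding transverse intersections at flat points between $H$ and some $\Gamma$-translate of $H$. Assume for contradiction that a component $H$ of $\che C_j$ is a homologically doubled null half-plane. Then $H$ sits in a unique null plane $\Pi$ whose single null direction corresponds to a point $[\vecv]\in\partial\Ss_+$, and $H$ is bounded by a null line $\ell\subset\Pi$. The closure $\clo(H)$ in $\SI^3$ adds a great segment through $[\vecv]$ together with a segment $\zeta(x)\subset\Ss_0$ for some $x\in\partial\Ss_+$, so that the boundary behavior of $H$ is entirely analogous to (though simpler than) the lune configuration handled in the previous case.

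Next I would exploit the doubling to guarantee that $H$ genuinely has $2$-dimensional flat interior in $\Pi$. Chabauty convergence of sets forgets multiplicity, but the homological doubling hypothesis means precisely that two distinct families of $2$-dimensional sheets of the embedded crooked planes $C_j(t_i)$ accumulate on each interior point of $H$; hence $H$ is a full null half-plane, not a lower-dimensional remnant, and has open flat regions. Because $\mathcal{L}(\Gamma)$ is a nonelementary discrete subgroup of $\SO(2,1)$, there exists $\gamma\in\Gamma\setminus\{\Idd\}$ whose linear part does not fix $[\vecv]\in\partial\Ss_+$. Then $\gamma(\Pi)\neq\Pi$, so the two null planes meet transversally along an affine line; a direct analysis of the two null half-planes $\gamma(H),H$ and their great-circle boundaries in $\SI^3$, parallel to the lune analysis carried out in the preceding case, produces a transverse intersection of $\gamma(H)$ with $H$ at interior flat points of both.

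Finally I would apply Lemma~\ref{lem:geolimdisk} to the sequences $\{C_j(t_i)\}$ and $\{\gamma(C_j(t_i))\}$, which are disjoint in $\Lspace$ because $C_j(t_i)$ descends to an embedded crooked plane in $M_{t_i}$ and no nontrivial deck element preserves a lift. The flat-point transverse intersection of the Chabauty limits $H$ and $\gamma(H)$ then contradicts that lemma, completing the argument. The main obstacle is the second step: making precise the passage from the homological doubling hypothesis (a $\bZ_2$-intersection statement about the sequence) to the existence of a genuinely open $2$-dimensional flat region in $H$, since the Chabauty limit itself is blind to multiplicity. Once the doubled configuration is shown to force open flat overlap between $H$ and some suitable $\gamma(H)$, the transversality argument proceeds exactly as in the previously treated cases.
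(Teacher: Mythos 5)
Your proposal takes a genuinely different route from the paper, and it has a gap at its crucial step. The paper's proof is homological and very short: by the final statement of Lemma~\ref{lem:divcrook}, when the limit is a homologically doubled null half-plane the $2$-cycles carried by the crooked planes $C_j(t_i)$ converge to a cycle that double-covers the half-plane and hence represents the zero class in $H_2(\clo(N)\cup\partial\mathcal{H},\partial\mathcal{H};\bZ)$. Consequently the mod-$2$ intersection number of $C_j(t)$ with every circle of $K_t$ is even for some $t>0$, which contradicts Lemma~\ref{lem:essential} (each nonseparating crooked plane in an essential collection must meet some circle of $K_t$ an odd number of times). No transversality or flat-point analysis is involved.

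The gap in your argument is the assertion that "a direct analysis \dots produces a transverse intersection of $\gamma(H)$ with $H$ at interior flat points of both." Unlike the complete-plane case (two non-parallel planes in $\Lspace$ must cross) and the lune case (two distinct lunes in $\Ss$ cannot nest, so their boundary great segments must cross in their interiors), two null \emph{half}-planes in distinct null planes need not meet at all: the affine line $\Pi\cap\gamma(\Pi)$ can easily miss the interior of one or both half-planes, and the traces $\clo(H)\cap\Ss$ are single arcs of the form $\zeta(x)$, which for distinct $x$ can be pairwise disjoint (this is exactly why $Z(\alpha)$ is a disk foliated by the $\zeta(x)$). Indeed, this case is singled out in the paper precisely because the degenerate limit can be geometrically quasi-disjoint from all of its $\Gamma$-translates, so no contradiction with Lemma~\ref{lem:geolimdisk} is available; only the homological doubling kills it. Incidentally, the obstacle you do flag --- recovering an open flat region from the Chabauty limit despite loss of multiplicity --- is not the issue: by the definition preceding Lemma~\ref{lem:finite} and the statement of Lemma~\ref{lem:divcrook}, the doubled null half-plane is by construction a genuine strongly null half-plane together with a distinguished $2$-cycle, so it has flat interior automatically. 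If you want to keep a geometric argument you would have to prove that some translate genuinely crosses $H$, and I do not see how to do that; the intended argument is the intersection-pairing one.
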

\begin{proof}
In this case, the component $D_t$ meets $K_t$ even number of times 
for some $t> 0$ homologically. 
This contradictions by Lemma \ref{lem:essential}. 
\end{proof}

Hence, we can have as the Chabauty limit the crooked planes by
Lemma \ref{lem:divcrook}. 
This completes the proof of Proposition \ref{prop:genconj}.

Given a sequence of crooked planes, we can always obtain sequences of corresponding stems, wings, and origins. 
A sequence of crooked planes in $\Lspace$ {\em  degenerates} if 
no subsequence converges to a crooked plane. 
(See Lemma \ref{lem:divcrook}.)

\begin{corollary} \label{cor:nodegeneracy} 
There is no degerating sequence of quasi-injective immersed 
crooked planes in $\Lspace$ meeting a compact set in $\Lspace$ mapping to 
a quasi-injective immersed crooked planes in $M$. 
This also implies that the set of origins are in a bounded subset of $\Lspace$. 
\end{corollary}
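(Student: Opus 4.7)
The plan is to argue by contradiction and reuse essentially the entire machinery already assembled for Proposition \ref{prop:genconj}. Suppose $\{P_n\}$ is a degenerating sequence of quasi-injective immersed crooked planes in $\Lspace$, with each $P_n$ meeting a fixed compact set $K \subset \Lspace$. By Chabauty compactness (Subsection \ref{sub:Chabauty}), after passing to a subsequence we obtain a Chabauty limit $P_\infty$, which is nonempty because $P_n \cap K \ne \emp$ for every $n$ and $K$ is compact. By Lemma \ref{lem:divcrook}, $P_\infty$ is either a crooked plane or one of the degenerate possibilities enumerated there (a set containing a complete null plane or time-like plane, a union of a null half-plane with a time-like half-plane meeting along a null line, or a homologically doubled null half-plane). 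Since $\{P_n\}$ is assumed to degenerate, $P_\infty$ is not a crooked plane.

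Next I would rule out each degenerate case by transplanting the arguments used in the proof of Proposition \ref{prop:genconj}. The hypothesis of quasi-injectivity of the immersed crooked planes in $M$ means that for every $\gamma \in \Gamma - \{\Idd\}$, the lifts $P_n$ and $\gamma P_n$ are quasi-disjoint in $\Lspace$; in particular they are Chabauty limits of disjoint pairs of crooked planes, so Lemma \ref{lem:geolimdisk} applies and prohibits transverse intersections between $P_\infty$ and $\gamma P_\infty$ at flat points of both. However, in each degenerate case the set $P_\infty$ carries a large flat piece of dimension two: a whole null or time-like plane meets some $\gamma$-translate transversally for generic $\gamma \in \Gamma$, and in the null/time-like half-plane case the same ``lune'' argument (two great segments on $\partial\Ss$ realizing the boundary of the disk, which cannot be contained in another lune unless vertices coincide) forces transverse intersection at flat points of the boundary disks. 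The homologically doubled case is eliminated exactly as in Lemma \ref{lem:homdouble}: a doubled null half-plane would meet each loop of a bouquet representing $\pi_1(M)$ an even number of times, contradicting the odd-parity consequence of quasi-injective essentiality. This contradiction shows that $P_\infty$ must actually be a crooked plane, contradicting the assumption that $\{P_n\}$ degenerates.

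For the boundedness of origins, once degeneracy has been excluded, it suffices to observe that if the sequence of origins $p_n$ of $P_n$ were unbounded in $\Lspace$, then after passing to a subsequence we could assume $p_n \to \infty$ in the Euclidean sense. Because $P_n$ still meets the compact $K$, the stems and wings of $P_n$ are forced to extend over distances comparable to $\lVert p_n\rVert$, and by Lemma \ref{lem:conv} the Chabauty limit would flatten to a union of full affine subspaces — a degenerate object excluded by the first part. Hence the origins lie in a bounded subset of $\Lspace$.

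The step I expect to be the main obstacle is justifying the application of Lemma \ref{lem:geolimdisk} in the quasi-disjoint (rather than strictly disjoint) setting: the lemma is stated for genuinely disjoint pairs, whereas here the lifts $P_n$ and $\gamma P_n$ are only quasi-disjoint. Since quasi-disjointness is by definition a Chabauty limit of disjoint pairs, the right maneuver should be a diagonal argument — approximate each $P_n$ and $\gamma P_n$ by truly disjoint pairs $P_n^{(k)}, Q_n^{(k)}$, extract a diagonal subsequence converging to $P_\infty$ and $\gamma P_\infty$, and then invoke Lemma \ref{lem:geolimdisk} on the diagonal sequence to rule out transverse intersection at flat points of the limit.
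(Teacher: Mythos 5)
Your proposal is correct and follows essentially the same route as the paper, which likewise extracts a Chabauty-convergent subsequence, invokes Lemma \ref{lem:divcrook} to classify the possible limits, and rules out each degenerate case by the arguments at the end of Section \ref{sub:genconj} (Lemmas \ref{lem:geolimdisk} and \ref{lem:homdouble}). Your closing remark about needing a diagonal argument to apply Lemma \ref{lem:geolimdisk} to merely quasi-disjoint (rather than genuinely disjoint) lifts is a valid refinement of a point the paper's one-line proof passes over silently.
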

\begin{proof} 
Suppose not. Then we can find a sequence of crooked planes. We extract a convergent subsequence and the limit. 
Then we use Lemma \ref{lem:divcrook} and the arguments in the last part of Section \ref{sub:genconj} in proving the Proposition \ref{prop:genconj} to obtain contradiction. 
\end{proof}

%




\section{The quasi-disjointness of the crooked disks} \label{sec:crookeddisj}

\subsection{Properties of the crooked planes} \label{sub:procrooked}


%

For each immersed crooked plane $P$ in $M:=\Lspace/\Gamma$, we can lift it to a crooked plane 
$\tilde P$ in $\Lspace$.  Define $\tilde s(\tilde P)$ to be 
$\clo(\tilde P) \cap \Ss_+$ and $s(P)$ its image in $\Sigma_+$. 


\begin{lemma} \label{lem:circlecrook}
 Suppose that a crooked plane $C$ immerses quasi-injectively in $\Lspace/\Gamma$. 
\begin{itemize} 
\item 	There is no crooked plane $C$ in $\Lspace$ such that for $g\in \Gamma -\{\Idd\}$,  
	$g$ acts on $\tilde s(C)$. 
\item 	$\tilde s(C):= \clo(C)\cap \Ss_+$ 
	goes to a nonclosed injective geodesic in $\Sigma_+$. 
\end{itemize} 
\end{lemma}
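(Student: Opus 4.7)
The plan is to deduce both items from a direct application of Proposition \ref{prop:gendisj} in the parallel case $\vecu_2 = -\vecu_1$.

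For Item~1, I suppose for contradiction that some $g \in \Gamma \setminus \{\Idd\}$ preserves $\tilde s(C)$ setwise. Since $\mathcal{L}(\Gamma)$ is a discrete faithful image of a free group in $\PSO(2,1)$, it is torsion-free and contains no elliptic elements; moreover, a parabolic fixes only a single ideal point and cannot preserve a complete geodesic. Hence $g$ is hyperbolic with $\tilde s(C)$ as its axis, so $\mathcal{L}(g)$ fixes the spacelike direction $\vecu$ of $C = \mathcal{C}(p, \vecu)$ and $g(C) = \mathcal{C}(g(p), \vecu)$. By quasi-injectivity, $C$ and $g(C)$ are quasi-disjoint. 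Rewriting $g(C) = \mathcal{C}(g(p), -\vecu)$, the pair $(\vecu, -\vecu)$ is consistently oriented and asymptotic, hence noncrossing, so Proposition \ref{prop:gendisj} yields $g(p) - p \in \clo(D(-\vecu) - D(\vecu))$. The standard-orientation convention in the null-frame definition forces $(-\vecu)^\pm = \vecu^\mp$, and a direct computation gives
\[
D(-\vecu) - D(\vecu) = \{r\vecu^+ + s\vecu^- : r \geq 0,\ s \leq 0\} \subset \vecu^\perp,
\]
which is already closed and has zero $\vecu$-component. On the other hand, the $\vecu$-component of $g(p) - p$ equals the Margulis invariant $\alpha(g)$ of the hyperbolic element $g$, which is nonzero by the opposite-sign lemma for proper affine actions. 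This contradicts quasi-disjointness and proves Item~1.

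For Item~2, nonclosedness of $s(C)$ in $\Sigma_+$ is immediate from Item~1. For injectivity of the projection $\tilde s(C) \to \Sigma_+$, it suffices to prove $g(\tilde s(C)) \cap \tilde s(C) = \emp$ in $\Ss_+$ for every $g \neq \Idd$. If the intersection contained two or more points, the two complete geodesics would coincide, contradicting Item~1. A single transverse intersection in $\Ss_+$ would render $\vecu$ and $\mathcal{L}(g)\vecu$ crossing in the sense of Section \ref{sub:disj}, so the stem planes of $C$ and $g(C)$ would meet in a timelike affine line; since a timelike ray through any base point eventually enters any light cone, the stems themselves intersect in a subray. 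This intersection is stable under small perturbations of the direction vectors and origins, so $C$ and $g(C)$ cannot be Chabauty limits of disjoint crooked planes, again contradicting quasi-disjointness.

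The principal obstacle is the parallel-case bookkeeping in Item~1: correctly identifying $(-\vecu)^\pm = \vecu^\mp$, computing $D(-\vecu) - D(\vecu)$, and isolating the $\vecu$-component as the Margulis invariant. Once this is in hand, the remainder is routine hyperbolic geometry plus the basic fact that transverse crossing of $\tilde s$'s forces stem intersection.
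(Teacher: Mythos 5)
Your proposal is correct and follows essentially the same route as the paper: both reduce Item~1 to Proposition \ref{prop:gendisj} applied to the parallel pair $(\vecu,-\vecu)$, which forces $g(p)-p$ into $\vecu^\perp$, and then derive a contradiction from the nonzero $\vecu$-component of $g(p)-p$ (your Margulis-invariant phrasing and the paper's ``$g$ translates along a space-like axis parallel to $\vecu_C$'' are the same fact, and nonvanishing already follows from freeness of the action, so the opposite-sign lemma is more than you need). Your transversal-crossing argument for injectivity in Item~2 is a welcome addition --- the paper's proof only remarks that nonclosedness follows from Item~1 --- and it is in effect a special case of Lemma \ref{lem:geolimdisk}, which you could cite instead of reproving.
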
 
\begin{proof} 
	Suppose not. Let $C$ be one where $s_C$ has an element 
	$g\in \Gamma -\{ \Idd \}$ acting on $l$. 
	
	Suppose that $g$ acts on $C$ itself. Then $g$ acts on the origin of $C$, which is 
	a contradiction. 
	
	Suppose that $g(C)$ is not $C$. The associated unit space-like vector $\vecu_C$ 
	is an eigenvector of $\mathcal{L}(g)$. 
	Hence $\vecu_C^+$ and $\vecu_C^-$ are also $\mathcal{L}(g)$-invariant. 
	The quasi-disjointness 
	implies that $p_{g(C)} - p_C = g(p_C) - p_C$ is in the span of 
	$\vecu_C^+$ and $\vecu_C^-$ by Proposition \ref{prop:gendisj}. 
	But $g$ acts on a space-like affine line parallel to $\vecu_C$ perpendicular 
to $\vecu_C^\perp$. 
	Therefore, $g(p_C) - p_C$ must have a nonzero $\vecu_C$-component. 
	This is a contradiction. 
The nonclosedness of the second part follows from the first part. 
\end{proof} 

Recall the theory of geometric laminations on hyperbolic surfaces. 
A geometric lamination is a union of complete geodesics mutually disjoint from 
one another. A leaf is a complete geodesic in it. 
A {\em limit leaf} of a set $A$ of leaves is a complete 
geodesic that is in the closure of the union of leaves in  
$A$ but not in $A$ itself.  

Let $s(A) = \{s(P), P\in A\}$ for a subset $A \subset \mathcal{CR}(\Lspace)$. 
\begin{lemma}\label{lem:no_separation} 
Let $A$ be the set of crooked planes in $\Lspace$ mapping to a set of mutually quasi-disjoint quasi-injectively immersed crooked planes in $M$. 
A limit leaf $l_\infty$ of a leaf $l$ of $s(A)$ does not separate 
a leaf $l'$ and $l$ in $s(A)$
unless $l_\infty$ is a simple closed curve parallel to boundary. 
\end{lemma}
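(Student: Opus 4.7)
The plan is to argue by contradiction. Suppose $l_\infty$ separates some leaf $l'\in s(A)$ from $l$ in $\Sigma_+$ yet $l_\infty$ is not a simple closed curve parallel to a boundary component. Lift to $\Ss_+$: choose a specific lift $\tilde l_\infty$ of $l_\infty$ separating chosen lifts $\tilde l,\tilde l'$ of $l,l'$. Since $l_\infty$ is a limit leaf of $s(A)$, pick a sequence of leaves $l_n\in s(A)$ whose lifts $\tilde l_n$ (after applying suitable deck transformations keeping $\tilde l_n$ in a fixed bounded neighborhood of $\tilde l_\infty$) converge to $\tilde l_\infty$ in the Chabauty topology of $\Ss_+$. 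Each $\tilde l_n=\tilde s(P_n)$ for some $P_n\in A$.

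Next I would extract a genuine limit crooked plane. With the deck transformations chosen as above, the $P_n$ meet a common compact set in $\Lspace$, so by Corollary~\ref{cor:nodegeneracy} (applied to the $\Gamma$-invariant quasi-injective family $A$) no subsequence degenerates. Passing to a subsequence, $P_n\to P_\infty$ in Chabauty, with $\tilde s(P_\infty)=\tilde l_\infty$. Quasi-disjointness is preserved in the Chabauty limit, so $P_\infty$ is quasi-disjoint from every element of $A$ and from every element of its own $\Gamma$-orbit; in particular $P_\infty$ descends to a quasi-injectively immersed crooked plane in $M$. Now split into cases. If $l_\infty$ is a simple closed geodesic not parallel to the boundary, then a non-boundary-parallel hyperbolic element $g\in\Gamma-\{\Idd\}$ satisfies $g(\tilde l_\infty)=\tilde l_\infty=\tilde s(P_\infty)$, directly contradicting Lemma~\ref{lem:circlecrook}.

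The remaining case, where $l_\infty$ is a non-closed complete geodesic, is the main obstacle. Here $l_\infty$ must lie in a minimal sublamination $\Lambda$ of the Chabauty closure of $s(A)$ in $\Sigma_+$ which is exceptional (not a single closed curve), so leaves of $\Lambda$ accumulate on $l_\infty$ from both sides. Running the extraction from the opposite side of $\tilde l_\infty$ produces a second limit crooked plane $P'_\infty$ with $\tilde s(P'_\infty)=\tilde l_\infty$ but with origin on the opposite side of the affine plane containing the stem of $P_\infty$. The parallel case of Proposition~\ref{prop:gendisj} then forces $p_{P'_\infty}-p_{P_\infty}\in\vecu^\perp$, which, combined with the quasi-disjointness of $P_\infty$ and $P'_\infty$ from the crooked planes $P$ and $P'$ above $l$ and $l'$ and the separating assumption on $\tilde l,\tilde l'$, yields incompatible constraints on the origins.

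The hard part will be carrying out this last case precisely: one must show that the exceptional minimal structure of $\Lambda$ really forces two-sided approach of $l_\infty$ by $s(A)$-leaves (and not merely by $\Lambda$-leaves outside $s(A)$), using the $\Gamma$-invariance of $A$ and the separating hypothesis on $l,l'$, and then convert the two-sided approach into the geometric contradiction above. A secondary concern is the non-degeneration step when $l_\infty$ is non-closed, where the endpoints of $\tilde l_\infty$ on $\partial\Ss_+$ need not be fixed by any element of $\Gamma$; the required deck translates keeping $\tilde l_n$ in a bounded region must then be produced directly from the geometry of the accumulation of $l_n$ on $l_\infty$ rather than from any symmetry of $\tilde l_\infty$.
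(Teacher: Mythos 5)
Your strategy diverges from the paper's and leaves the decisive case open. The paper's proof exploits the separation hypothesis at the very first step: since $l_\infty$ is a limit leaf of the single leaf $l$, the approximating geodesics can be taken to be translates $l_i=g_i(l)$ with $g_i\in\Gamma$ unbounded, and for large $i$ these still separate $l$ from $l'$; hence the crooked planes $g_i(C)$ quasi-separate $C$ and $C'$, so by Lemma \ref{lem:quasi-sep} they all meet a fixed compact neighborhood $N(K'')$ of an arc joining $C$ to $C'$. Corollary \ref{cor:nodegeneracy} then bounds the origins $p_{g_i(C)}=g_i(p_C)$, which contradicts properness of the action (Corollary 4.9 of \cite{CDG22}) because $g_i$ is unbounded. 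No limit crooked plane over $l_\infty$ is ever constructed, and the closed/non-closed dichotomy plays no role.

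There are two concrete gaps in your version. First, you assert that after normalizing by deck transformations the $P_n$ ``meet a common compact set in $\Lspace$,'' but your normalization only controls $\tilde s(P_n)=\tilde l_n$ inside $\Ss_+$, i.e.\ at the ideal boundary; it gives no control on where the affine crooked planes $P_n$ sit in $\Lspace$, so Corollary \ref{cor:nodegeneracy} cannot yet be invoked. This is exactly the point where the separating hypothesis has to enter (via quasi-separation of $C$ and $C'$), and your argument never uses it there. Second, your main case --- $l_\infty$ a non-closed geodesic --- is explicitly unfinished: the two-sided accumulation you want is not automatic (boundary leaves of complementary regions of a minimal exceptional lamination are approached from one side only), and the intended contradiction from two limit crooked planes with equal stems and distinct origins is not carried out; the parallel case of Proposition \ref{prop:gendisj} does not by itself produce incompatible constraints on the origins. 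As written, the argument settles the easy closed case and only sketches, without completing, the case that actually matters.
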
 
\begin{proof} 
For each $P\in A$, 
$s(P)$ injective and not closed by Lemma \ref{lem:circlecrook}. 
Hence, $s(A)$ forms the set of geodesics which are mutually disjoint and are not closed. 
Suppose that $l_\infty$ is not a simple closed curve not parallel to boundary, 
and separates $l$ and $l'$. 
By taking a subsequence, 
we may assume that there exists a sequence of 
geodesics $l_i = g_i(l), g_i\in \Gamma$ 
with the property that $l_i \ra l_\infty$ and 
$l_i$ separates $l$ and $l'$.
We may assume that $g_i$ form a nonbounded sequence in $\Gamma$
since $g_i$ does not act on any of $l_i$ or $l$. 

Let $C$ be the crooked plane corresponding to $l$, and 
let $C'$ be one for $l'$. 
Since $l_\infty$ separates $l$ and $l'$, so do $l_i$ for sufficiently large $i$. 
 $C_i$ quasi-separates $C$ and $C'$ for $i > I_0$ for some $I_0$. 
We take a compact arc $K''$ from $C$ to $C'$. 
By Lemma \ref{lem:quasi-sep}, 
$C_i$, $i > I_0$, always meets a compact neighborhood $N(K'')$  of the arc $K''$. 
Hence, $C_i \in \mathcal{CR}(\Lspace)_{N(K'')}$ for $i> I_0$. 




By quasi-injectivity, we can deduce 
$p_{g_i(C_i)} - p_{C_i} \in \clo(D(\mathcal{L}(g_i)(\vecu_i) + D(\vecu_i))$
by Proposition \ref{prop:gendisj}  
since we need to reverse one of the vectors for consistency. 
By Corollary \ref{cor:nodegeneracy}, $\{p_{C}| C\in \mathcal{CR}(\Lspace)_{N(K'')}\}$ is in 
a bounded subset of $\Lspace$.  

By Corollary 4.9 of \cite{CDG22}, $g_i(p_{C_i}) = p(g_i(C_i))$ 
cannot be in a bounded subset of  a compact set in $\Lspace$
since $g_i$ is an unbounded sequence. 
This is a contradiction. 
\end{proof}

An embedded geodesic in $\Sigma_+$ lies in $\Sigma_+^\ast$ can meet a horospherical cusp neighborhood or 
hypercyclic neighborhood.

We say that an imbedded geodesic in $\Sigma_+$ {\em returns} on left (resp. right)
during the interval $(t_0, t_1)$, $t_1 > t_0$ if $l(t_1)$ can be reached from $l(t_0)$  by a
transversal  embedded geodesic 
segment disjoint from $l(t_0, t_1)$ and starting in the local left (resp. right) side of $l$ at $t_0$. 

A geodesic $l$ in $\Sigma_+$ {\em spirals} to a simple clossed geodesic $c$ if 
we can find an increasing sequence $t_i$ so that $l(t_{i+1})$ returns to $l(t_i)$ from left or right and the accumulation points of $l(t_i)$ are all in $C$. 
The is equivalent to the condition that a ift of $l$ ends at an end point of a lift of $c$. 

A geodesic $l$ {\em spiral} to a cusp $c$ if 
for each horocyclic neighborhood $B$ of $c$, there is $T$ so that 
$l(t)$ never leaves $B$ for $t> T$. 
This equivalent to the condition that 
its lift ends in a fixed point of the holonomy of $c$. 

Recall that a boundary neighborhood 
is of an end of $\Sigma_+$ is either a horospherical cusp 
neighborhood or a hypercyclic neighborhood union the geodesic end-neighborhood. 

The following is a standard fact perhaps not written up in this form. 
\begin{lemma}[Thurston]\label{lem:one-sided}  
Any embedded geodesic $l$ in $\Sigma_+$ must meet a boundary neighborhood $N$
of a boundary component or cusp end denoted by $c$   
in one of the following way: 
\begin{itemize} 
\item  It does not meet some boundary neighborhood $N'$ inside $N$. 
\item  It spiralling to $c$ and $c$ is a simple closed curve parallel to boundary 
or $c$ is a cusp. 
\end{itemize} 
Furthermore, $N'$ can be chosen depending on the length of $c$.
	\end{lemma}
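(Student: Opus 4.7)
The plan is to pass to the universal cover $\Ss_+ \cong \bH^2$ and analyze the arc-components of $l \cap N$. Fix a connected lift $\tilde N \subset \Ss_+$ of $N$: this is a horoball (when $c$ is a cusp) or a half-plane bounded by a hypercycle about a geodesic axis $\tilde c$ (when $c$ is a simple closed geodesic parallel to the boundary), and its stabilizer $\Gamma_c$ in $\Gamma$ is the cyclic subgroup generated by the deck transformation representing $c$. Lift $l$ to a disjoint family of complete geodesics in $\Ss_+$.

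The argument proceeds by dichotomy on the number of arc-components of $l \cap N$. If there are only finitely many arc-components, then each is either compact or a single non-compact arc running out to $c$. In the compact case I apply the collar lemma (or the standard horoball bound, for a cusp), which produces a concentric boundary neighborhood $N' \subset N$ whose depth depends only on $\ell(c)$ and which is strictly deeper than every such arc; then $l \cap N' = \emp$ and we are in the first alternative. In the non-compact case, the arc actually runs out to $c$, which means a lift of $l$ ends at the parabolic fixed point of the cusp or at an ideal endpoint of a lift of $c$, i.e.\ $l$ spirals into $c$, placing us in the second alternative.

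If infinitely many arc-components exist, lift them to a family of mutually disjoint geodesic chords $\tilde\alpha_i$ in $\tilde N$, each extending to a full geodesic lift $\tilde l_i$ of $l$; mutual disjointness of the $\tilde l_i$ follows from embeddedness of $l$. In the horoball case, mutually disjoint geodesic chords with endpoints on a horocycle nest and can only accumulate at the ideal parabolic fixed point; hence a limit lift of $l$ ends at that point and $l$ spirals into the cusp. In the hypercyclic case, mutually disjoint geodesic chords in a half-plane bounded by a hypercycle about $\tilde c$ can only accumulate at $\tilde c$ itself, forcing a lift of $l$ to share an ideal endpoint with $\tilde c$, and $l$ therefore spirals into $c$.

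The main subtlety I anticipate is excluding the possibility that the chords $\tilde \alpha_i$ merely drift under translates by $\Gamma_c$ without genuinely accumulating on the central object. The resolution is that $l$ is a single geodesic in $\Sigma_+$: only one of the $\tilde l_i$ is a ``principal'' lift through $\tilde N$, and every other visit is by a nontrivial $\Gamma$-translate. Since the quotient $\tilde N / \Gamma_c$ is a finite-area annular or cusp neighborhood, only finitely many chords project into any compact subregion, and the infinite family must therefore nest ever deeper into $\tilde N$. For the quantitative dependence of $N'$ on the length of $c$, the collar lemma (for a simple closed geodesic) and the Margulis constant (for a cusp) give explicit bounds on the depth at which such nesting can begin, yielding the uniform choice of $N'$ claimed in the statement.
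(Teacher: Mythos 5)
Your finite-component case is essentially the paper's entire proof: the key point is that a compact chord of an embedded geodesic inside a hypercyclic (or horoball) neighborhood must stay a definite distance, depending only on the length of $c$, away from the core, because otherwise its translate under the deck transformation generating the stabilizer of the lift of $c$ would cross it, contradicting embeddedness. The paper applies this bound to every compact arc-component at once, so the number of components never enters; the only dichotomy it needs is whether some arc-component of $l\cap N$ is non-compact, in which case convexity of the lifted neighborhood forces the corresponding lift of $l$ to have an ideal endpoint at an endpoint of the lift of $c$ (resp.\ at the parabolic fixed point), i.e.\ $l$ spirals or runs into the cusp.

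The genuine gap is in your infinitely-many-components case. The claim that mutually disjoint geodesic chords of a horoball ``nest and can only accumulate at the ideal parabolic fixed point'' is false: two disjoint, unnested complete geodesics cut disjoint, unnested chords out of a horoball, and an infinite disjoint family can accumulate along the bounding horocycle rather than at the fixed point. Likewise the area argument (``only finitely many chords project into any compact subregion'') fails because geodesic arcs have measure zero: infinitely many disjoint arcs of a single simple geodesic can pass through a compact region, accumulating on a lamination. Concretely, let $l$ be a leaf of a minimal filling geodesic lamination of a subsurface adjacent to $c$, with the lamination entering $N$ but staying at distance $\geq d_1>0$ from $c$; then $l\cap N$ has infinitely many compact components, $l$ misses the smaller neighborhood $N'=\{d(\cdot,c)<d_1/2\}$, and $l$ does \emph{not} spiral --- yet your argument would force the second alternative. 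The repair is to discard the finite/infinite dichotomy and split instead on whether every arc-component is compact: the uniform depth bound you already invoke in the finite case disposes of all compact components simultaneously, however many there are, and any non-compact component yields spiraling.
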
 
\begin{proof} 
%
Consider first when $c$ is not a cusp. 
Let $l_c$ denote the geodesic in $\Ss_+$ covering $c$. 
We can see from $\Ss_+$ that a geodesic meets a hypercyclic region $R$ for $l_c$ in a connected subsegment or it ends at the point where the closure of hypercyclic region meets $\partial \Ss_+$. 

For any such geodesic segment $s$ with both points of $\partial s$ in 
the same component of $\partial R$, 
the minimal distance from $s$ to $l_c$ has a uniform lower bound depending only on
the length of $c$: Otherwise, we can act by the deck transformation corresponding to $l_c$ so that $s$ meets with its image. 
(Actually, we can do the precise lower bound.) 

We now choose $N'$ by this bound. 
When $c$ is a cusp, we can work similarly on the upper half-space model. 
	\end{proof} 
Here, $N'$ depends on the length of $c$ and $N$.

\begin{corollary} \label{cor:bounded}
	We can choose a union $\hat E$ of boundary neighborhoods of $\Sigma_+$
	with the following properties for each quasi-injective immersed crooked plane $P$ in 
$\Lspace/\Gamma$ {\em :} 
	\begin{itemize}
		\item $s(P)$ is disjoint from $\hat E$, 
		\item  or $s(P)$ enters some component of $\hat E$ and ends at the associated ends; that is, 
		it ends at a cusp or spiral along a simple closed geodesic parallel to boundary. 
	\end{itemize}
Furthermore, $E$ can be chosen to be the union of $\eps$-neighborhoods of a 
boundary neighborhood or $\eps$-thin part of the cusps where $\eps$ is depends only on the lengths of the closed geodesics for a fixed $\eps >0$. 
\end{corollary}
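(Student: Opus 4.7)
The plan is to apply Lemma \ref{lem:one-sided} (Thurston) simultaneously to each boundary component or cusp end of $\Sigma_+$ and assemble a uniform union of boundary neighborhoods whose choice depends only on the lengths of the boundary-parallel closed geodesics.

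First, by Lemma \ref{lem:circlecrook}, for every quasi-injective immersed crooked plane $P$ in $\Lspace/\Gamma$ the image $s(P)$ in $\Sigma_+$ is an injective (embedded) and non-closed geodesic. This is precisely the hypothesis needed to apply Lemma \ref{lem:one-sided} to $s(P)$ near each boundary component or cusp end of $\Sigma_+$.

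Next, fix a boundary neighborhood $N_c$ for each end $c$ of $\Sigma_+$ (horospherical if $c$ is a cusp, and the union of a hypercyclic neighborhood with the geodesic end-neighborhood if $c$ is a boundary-parallel simple closed geodesic). By Lemma \ref{lem:one-sided}, there is a smaller boundary neighborhood $N_c' \subset N_c$ with the property that any embedded geodesic in $\Sigma_+$ either avoids $N_c'$ entirely or else spirals into the end $c$. Since $\Sigma_+$ has only finitely many ends, and since Lemma \ref{lem:one-sided} allows $N_c'$ to be taken as an $\eps$-neighborhood of a boundary neighborhood (or as the $\eps$-thin part of the cusp) with $\eps$ a function of $\mathrm{length}(c)$ alone, we may choose a single $\eps>0$ that works simultaneously at every end. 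Define
\[
\hat E := \bigcup_{c} N_c'.
\]

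It remains to verify the dichotomy. Given a quasi-injective immersed crooked plane $P$, either $s(P)\cap \hat E = \emptyset$, in which case the first alternative of the corollary is satisfied; or else $s(P)$ meets some component $N_c'$, and then Lemma \ref{lem:one-sided} forces $s(P)$ to spiral to the end $c$ (a cusp, or a simple closed geodesic parallel to boundary), which is the second alternative. The non-closedness from Lemma \ref{lem:circlecrook} excludes the degenerate case in which $s(P)$ itself is the closed geodesic $c$. I do not anticipate any genuine obstacle: the argument is a direct packaging of Lemmas \ref{lem:circlecrook} and \ref{lem:one-sided}, with the only subtlety being the uniform choice of $\eps$, which is handled because there are finitely many ends and because $N_c'$ is a function of $\mathrm{length}(c)$.
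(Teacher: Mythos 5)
Your proposal is correct and follows essentially the same route as the paper, which simply observes that both alternatives are immediate from Lemma \ref{lem:one-sided} together with a straightforward quantification over the finitely many ends; your additional invocation of Lemma \ref{lem:circlecrook} to ensure $s(P)$ is an embedded, non-closed geodesic (so that Lemma \ref{lem:one-sided} applies) is a sensible explicit step that the paper leaves implicit.
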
 
\begin{proof} 
The two items are immediate from Lemma \ref{lem:one-sided}.  
We need to do some quantification of the proof, which is straightforward. 
%
%
\end{proof}

\subsection{The isolation of crooked planes} \label{sub:isolated} 

We now try to understand the property of the crooked disks obtained by taking the limits.


Let $\bdd$ denote the spherical metric on $\SI^3$. 
We denote by $\bdd^H$ the Hausdorff distance on the set of subsets of $\SI^3$. 
We will also use the metric $d_{\Uu \Ss_+}$ on $\Uu \Ss_+$ induced from 
the hyperbolic metric $d_{\Ss_+}$.  The induced metric on $\Uu \Sigma_+$ is 
denoted by $d_{\Uu \Sigma_+}$. 

\begin{lemma} \label{lem:noinfinite} 
Let $A$ be a collection of 
crooked planes in $\Lspace$ meeting a fixed compact set $K \subset \Lspace$
that are quasi-disjoint and maps to quasi-injectively immersed crooked planes 
in $\Lspace/\Gamma$. Suppose that $s(A)$  cuts the surface $\Sigma_+$ into 
a union cells. 
Then if the set $s(A)$ has a leaf infinitely long in an end, 
then the leaf ends at a cusp corresponding to the end,
or spirals along a simple closed geodesic parallel to boundary of $\Sigma_+$ corresponding 
to the end. 
\end{lemma}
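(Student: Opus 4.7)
The plan is to derive the lemma from Lemma \ref{lem:one-sided} and Corollary \ref{cor:bounded}, applied to the underlying embedded geodesic in $\Sigma_+$. Let $l \in s(A)$ be a leaf that is infinitely long in an end $c$ of $\Sigma_+$, and write $l = s(P)$ for some $P\in A$. By Lemma \ref{lem:circlecrook}, $l$ is an embedded, nonclosed geodesic in $\Sigma_+$, so the hypotheses of the one-sided/boundary-neighborhood results apply to $l$.

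First I would unpack what ``infinitely long in an end $c$'' means. The natural interpretation is that a ray of $l$ accumulates on $c$; equivalently, for every boundary neighborhood $N$ of $c$, the intersection $l\cap N$ is noncompact, so $l$ meets every sufficiently small boundary neighborhood of $c$. In particular, if $\hat E$ is the union of boundary neighborhoods of all ends furnished by Corollary \ref{cor:bounded} for the family $A$, then the component $\hat E_c$ of $\hat E$ associated to $c$ must be met by $l$.

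Next I would invoke Corollary \ref{cor:bounded} directly: since $s(P) = l$ is not disjoint from $\hat E$, it must enter some component of $\hat E$ and end at the associated end, say $c'$. Because $l$ accumulates only at $c$ in that end (by the infinite-length assumption together with embeddedness of $l$, which rules out accumulation in the thick part), the component entered is $\hat E_c$ and $c' = c$. The alternatives provided by Corollary \ref{cor:bounded} are precisely ``ends at a cusp'' or ``spirals along a simple closed geodesic parallel to boundary'', which are the two alternatives in the conclusion, and in both cases associated to the end $c$ as required.

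The main obstacle is the translation from ``leaf infinitely long in an end'' into ``meets every small boundary neighborhood of that end''. This uses the standard hyperbolic fact, packaged into Lemma \ref{lem:one-sided}: an embedded geodesic ray in a boundary neighborhood of $c$ either leaves some smaller boundary neighborhood $N'$ (in which case only a bounded portion stays near $c$, contradicting the infinite-length hypothesis), or it spirals onto $c$. Once this translation is in place, the conclusion is an immediate consequence of Corollary \ref{cor:bounded}; the assumption that $s(A)$ cuts $\Sigma_+$ into cells is not used directly here, but supplies the ambient context ensuring the leaves of $s(A)$ are mutually disjoint embedded geodesics so that Lemma \ref{lem:circlecrook} and Corollary \ref{cor:bounded} apply simultaneously to every $P\in A$.
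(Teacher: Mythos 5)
There is a genuine gap. Your argument reduces everything to Corollary \ref{cor:bounded} by asserting that a leaf which is infinitely long in an end must meet every small boundary neighborhood of some end of $\Sigma_+$, and you justify this with the parenthetical claim that embeddedness of $l$ ``rules out accumulation in the thick part.'' That claim is false: an injective, nonclosed geodesic on a hyperbolic surface can have an infinitely long ray that recurs and accumulates entirely inside the compact part $\Sigma_+\setminus \hat E$ (for instance, a leaf of a minimal geodesic lamination with no closed leaves). This recurrent case is exactly the content of the lemma, and it is the case your proof never addresses: Corollary \ref{cor:bounded} only tells you what happens once the leaf enters $\hat E$, and is silent about a leaf whose infinite length is spent outside $\hat E$.

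The paper's proof handles precisely this case. After using Corollary \ref{cor:bounded} to reduce to a leaf that is infinitely long in $\Sigma_+\setminus\hat E$ (the same first step you take), it forms the closure of $s(A)$ as a geodesic lamination supported in the compact part, extracts a limit leaf $l_\infty$ passing through a compact set, uses Corollary \ref{cor:nodegeneracy} to realize $l_\infty$ as $s(P)$ for a limit crooked plane (this is where the hypothesis that the planes in $A$ meet a fixed compact set $K$ enters), and then applies Lemma \ref{lem:no_separation}: translates of the approximating leaves lie on both sides of $l_\infty$, so $l_\infty$ would separate leaves of $s(A)$, which the quasi-disjointness of the crooked planes forbids unless $l_\infty$ covers a boundary-parallel closed geodesic --- the spiraling case. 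In other words, the mechanism that excludes recurrence is the quasi-disjointness of the crooked planes, encoded in Lemma \ref{lem:no_separation} together with Corollary \ref{cor:nodegeneracy}, not the embeddedness of a single leaf; your proposal uses neither of these ingredients and therefore cannot close the argument.
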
 
\begin{proof} 
%
Suppose that $s(A)$ does not end at a cusp at an end or spiral around 
a closed geodesic parallel boundary.
Then $s(A)$ is infinitely long outside $\Sigma \setminus \hat{E}$.
We let $\Lambda_1$ to denote the closure of $s(A)$ in $\Uu\Sigma_+$
which maps to $\Sigma_+ \setminus \hat{E}$
by Corollary \ref{cor:bounded}. $\Lambda_1$ can be considered a geodesic lamination. 

Let $\tilde \Lambda_1$ denote the inverse image of $\Lambda_1$ in $U\Lspace$. 
Two leaves of $\tilde \Lambda_1$ are equivalent if there is an element $g\in \Gamma$ sending 
on leaf to the other. 

Since crooked planes in $A$ meets a compact set $K$, we may assume that 
each leaf of $\tilde \Lambda_1$ is $s(P)$ for some crooked plane $P$ 
by taking a limit of crooked planes in $A$ by Corollary \ref{cor:nodegeneracy}.

By our choice, there exists a crooked planes $C_i$  
so that $\clo(C_i) \cap \Ss_+ = l_i$ and the closure of $l_i$ in $\Ss_+$ contains a limiting 
geodesic $l_\infty$ passing a compact set $K'$. 
Let $\vecu_i$ denote the space-like unit vector associated with $C_i$. 
We may also assume that $l_i \ra l_\infty$ for a complete geodesic $l_\infty$ passing $K'$.
Unless $l_\infty$ covers a simple closed geodesic parallel to boundary, 
we must have some element of the image of $l_i$ under a deck transformation element in both components of $\Ss_+ \setminus l_\infty$. 
By Lemma \ref{lem:no_separation},  this is a contradiction. 
Hence, $l_\infty$ cover a simple closed curve parallel  to boundary. 
This is the spiraling case.



%
%
\end{proof}





Now, choose the union of boundary neighborhoods as in Corollary \ref{cor:bounded}. 
A quasi-injective immersed crooked plane $C$ {\em passes} a boundary component or 
a cusp if $s(C)$ spirals into one of the boundary neighborhoods of the  boundary component or ends at a cusp. 
This can happen for both ends of $s(C)$ or just one of the ends of $s(C)$. 

To study this, we choose an embedded solid torus associated with 
a component of $\hat E$ by choosing the component of 
$\hat E$ sufficiently small 
whenever the component of $\hat E$ is a cusp neighborhood. 
When the component of $\hat E$ is not a cusp, we choose a solid torus  that is a compact tubular neighborhood of 
one of $A_i$
We take these so that their union $T_{\hat E}$ satisfies
$T_{\hat E} \cap \Sigma_+ = \hat E$. (See  Section \ref{sub:before}.)
Let $M_{\hat E}$ denote $M \cup \Sigma \setminus T_{\hat E}^o$. 
$\partial M_{\hat E}$ is a union of a compact surface $\Sigma_{\hat E}:= \Sigma \setminus T_{\hat E}^o$ and 
a union ${\mathfrak{A}}_{\hat E}:= (M \cap T_{\hat E})$ of annuli.
$\Sigma_{\hat E} \cap {\mathfrak{A}}_{\hat E} = \partial {\mathfrak{A}}_{\hat E} = \partial \Sigma_{\hat E}$ which is a union of mutually disjoint circles. These are paired by 
components of ${\mathfrak{A}}_{\hat E}$. 

To explain more,
we recall the annulus $A_i$ for each boundary component $a_i$ of $\Sigma_+$. 
Then we take a solid torus $T_i$ a component of $T_{\hat E}$ that is the compact tubular neighborhood of $A_i$.
The boundary of $T_i$ equals $A_i \cup A'_i$ where 
$\mathcal{A'}_i$ is a compact annulus in $M^o$ with boundary a simple closed curve in $\Sigma_+$ parallel to $a_i$ 
and another one in $\Sigma_-$.


Let $E$ denote the union of parabolic components of $\hat E$.
We denote by $T_E$ the union of components of $T_{\hat E}$ associated with components of $E$. 
If $s(C)$ for a crooked plane does not end in a component of $E$ or 
spiral to a simple closed curve parallel to boundary, 
we will call $C$ a {\em safe crooked plane}.
These have compact closures in $M$ as disks disjoint from $E$.  
The safe crooked planes can be isotopied into $M_E$ disjoint from 
$T_E$. 

Otherwise, one case is that $\partial C \cap T_E$ is not empty, 
and $\partial (C - T_E^o)$ is a union of arcs in $\Sigma_E$ and ${\mathfrak{A}}_E$ with end points in $\Sigma_+ \cap {\mathfrak{A}}_E$ and 
$\Sigma_- \cap {\mathfrak{A}}_E$. 

The arcs in ${\mathfrak{A}}_E$ are called {\em cusp hanged arcs.}
Recall that the crooked plane here are called a  cusp hanging one. 
The image of the crooked plane in $M$ is also called the same. 

Another case is when $s(C)$ spirals along a simple closed geodesic parallel to boundary. 
In this case. recall that $C$ is called a{\em boundary spiraling one}, 
and so is the image in $M$. 
Note that $C$ may be cusp hanging in one end of $s(C)$ and boundary spiralling 
at the other end. 

\label{hanging}




Let $\Sigma_*$ denote the closure of the component $\Sigma_+ \setminus \hat{E}$ with boundary parallel closed geodesics removed with the negative Euler characteristic. 

Let $C$ be a crooked plane quasi-injectively immersed in $M$.  
We may assume that $C \setminus T_{\hat E}$ is still isotopic to $C$ by isotopying $C$ in $M$ by isotopying 
$C$ away from $\partial M$. 



Given a geodesic $l$ on $\Sigma$ 
which ends in $\hat E$, we can remove end neighborhoods so that it ends in 
$\hat E)$.  Its homotopy class is uniquely determined 
in $(\Sigma_+, \hat E)$. 

A crooked plane can be oriented by orienting each piecewise totally geodesic subspaces 
so that their boundary orientations disagree on the intersections. 

Also, consider an immersed oriented crooked plane $C$ in $M$, we can 
take the closure in $M\cup \Sigma$ 
with an immersed geodesic segment in $\Sigma_+$
We remove the part of $C$ going outside a compact neighborhood of $M_{\hat E}$, and 
obtain a map $(D,\partial D) \ra (M \cup \Sigma, \Sigma_{\hat E} \cup T_{\hat E})$
for a disk $D$ with an orientation agreeing with that of $C$. 
The homootpy class is well-defined independent of the choices. 

\begin{corollary}\label{cor:homotopy} 
Choose $\hat E$ by Corollary \ref{cor:bounded}. 
Let $H_D$ be the set of homotopy classes  of quasi-injectively immersed oriented 
crooked planes in 
$(D, \partial D) \ra (M\cup \Sigma, \Sigma_{\hat E}  \cup T_{\hat E})$, 
and $H_s$ the homotopy classes of embeddings 
$(I, \partial I) \ra (\Sigma, \hat E)$.
Then there is an injective map
$H_D \ra H_s$ induced by $[C] \mapsto [s(C)]$. 
\end{corollary}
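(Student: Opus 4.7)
The plan is to exploit the rigidity of crooked planes. Since a crooked plane $\mathcal{C}(p,\vecu)$ is parametrized by an unoriented geodesic in $\Ss_+$ (namely the component of $\vecu^\perp\cap\Ss_+$), an orientation on the disk, and an origin $p\in\Lspace$, I will trace how the homotopy class $[s(C)]\in H_s$ controls each of these data. To prove injectivity I must show that if $s(C_1)$ and $s(C_2)$ are homotopic in $(\Sigma,\hat E)$ then $C_1$ and $C_2$ are homotopic in the pair $(M\cup\Sigma,\,\Sigma_{\hat E}\cup T_{\hat E})$.

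First I would lift the given homotopy of $s$-arcs to the universal cover. Since $s(C_1)$ and $s(C_2)$ lift to complete geodesics in $\Ss_+$ (using Lemma~\ref{lem:circlecrook} and Corollary~\ref{cor:bounded}, together with the convention that endpoints end at a cusp or at a boundary-parallel closed geodesic when they enter $\hat E$), a homotopy in $(\Sigma,\hat E)$ sends the two endpoints into the same pair of components of $\hat E$. In the cover this forces, after replacing $\tilde C_2$ by $\gamma\tilde C_2$ for some $\gamma\in\Gamma$, that $s(\tilde C_1)$ and $s(\tilde C_2)$ have the same pair of ideal endpoints in $\partial\Ss_+$ (respectively, the same cusp or hyperbolic axis endpoint when $s(C_j)$ is cusp-hanging or boundary-spiraling). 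Two geodesics in $\Ss_+$ with matching ideal endpoints coincide, so $s(\tilde C_1)=s(\tilde C_2)$ as unoriented geodesics.

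Once these geodesic arcs coincide, the associated unit space-like vectors satisfy $\vecu_1=\pm \vecu_2$; the orientation of the crooked planes (which is what fixes the side of $\vecu^\perp\cap\Ss_+$ and the positively oriented wings) then pins down the sign, so $\vecu_1=\vecu_2=:\vecu$. Thus $\tilde C_1=\mathcal{C}(p_1,\vecu)$ and $\gamma\tilde C_2=\mathcal{C}(p_2,\vecu)$ differ only by their origin. I then construct the explicit one-parameter family
\[
\tilde C_t := \mathcal{C}\bigl((1-t)p_1+t\,p_2,\ \vecu\bigr),\qquad t\in[0,1],
\]
whose underlying unoriented geodesic in $\Ss_+$ is constant. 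Descending to $M$ via $p_t$ produces a homotopy between $C_1$ and $C_2$ through quasi-injectively immersed oriented crooked planes.

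It remains to verify that the truncated disks stay inside $(M\cup\Sigma,\,\Sigma_{\hat E}\cup T_{\hat E})$ for all $t$. The main obstacle is the cusp-hanging and boundary-spiraling cases, where part of $\partial C_t$ lies in the annular piece $T_{\hat E}$. Since all members $\tilde C_t$ share the same $s$-arc in $\Ss_+$ and the same wings direction (only the origin translates along a compact segment), each $\tilde C_t$ has identical asymptotic behavior into the cusp/boundary; thus the truncation $\tilde C_t\setminus T_{\hat E}^{o}$ is obtained by intersecting with a continuously varying solid torus, and the family of disks $(D,\partial D)\to(M\cup\Sigma,\Sigma_{\hat E}\cup T_{\hat E})$ can be made a continuous homotopy by a standard reparametrization. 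This yields $[C_1]=[C_2]$ in $H_D$, proving injectivity of $[C]\mapsto[s(C)]$.
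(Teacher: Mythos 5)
Your reduction hinges on the claim that if the truncated arcs $s(C_1)$ and $s(C_2)$ are homotopic in $(\Sigma,\hat E)$, then after replacing $\tilde C_2$ by $\gamma\tilde C_2$ the complete geodesics $\tilde s(\tilde C_1)$ and $\tilde s(\tilde C_2)$ have the same ideal endpoints in $\partial \Ss_+$ and hence coincide. That step is the gap. A homotopy of arcs rel $\hat E$ only records which component of the preimage of $\hat E$ each end enters; it does not pin down the ideal endpoint of the lift. It does so in the cusp-hanging case (a horoball has a unique ideal point, the parabolic fixed point), but not in the other cases the corollary must cover: a geodesic spiraling onto a boundary-parallel closed geodesic may end at either of the two fixed points of the corresponding hyperbolic element $\eta_i$ (spiraling in either direction), and a geodesic exiting through a geodesic end-neighborhood may end at any point of the open arc $\tilde a_i\subset\partial\Ss_+$. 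In both situations the truncated arcs represent the same class in $H_s$ while the lifted geodesics are distinct and not even parallel, so you cannot conclude $\vecu_1=\pm\vecu_2$, and the explicit interpolation $\mathcal{C}\bigl((1-t)p_1+t\,p_2,\vecu\bigr)$ is unavailable. This is not a corner case: the short crooked planes used later have arcs running to funnel boundaries, exactly the situation where the endpoint is undetermined.

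The paper's proof deliberately avoids identifying the geodesics. It argues only that the homotopy class of $s(C)$ determines the boundary circle $\partial\clo(\tilde C)$ up to isotopy in $\tilde\Sigma$ --- two admissible ideal endpoints $x, x'$ in the same arc $\tilde a_i$ give segments $\zeta(x)$, $\zeta(x')$ that are isotopic inside $Z(\tilde a_i)$ --- and then uses that two disks in the contractible closed hemisphere $\mathcal{H}$ with isotopic boundaries are homotopic as maps of pairs, before projecting back to $M\cup\Sigma$. To repair your argument you would either need to adopt this soft topological step in place of the rigid identification, or first homotope each crooked plane so that its ideal endpoint inside each component of the preimage of $\hat E$ is put into a standard position, which amounts to the same thing. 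The final interpolation-of-origins device is fine as far as it goes, but it only ever handles the subcase where the two crooked planes already share their underlying geodesic.
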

\begin{proof} 
	Given two crooked planes $C_1, C_2$ where 
	$s_i = s(C_i), i=1, 2$, suppose that $s_1$ and $s_2$ cut off appropriately near
the ends are homotopic in 
$(\Sigma_+, \hat E)$.
This means that for each end point of $s_1$, there is one of $s_2$ in the same components of 
$\partial S_*$ and vice versa. Also, there is a parameter $s(t)$, $t\in [0, 1]$, 
of segments ending at the same components of $\hat E$ where $s(0) = s_1$ and $s(1)= s_2$. 

If $s_i$ extends to a geodesic going to the cusp points, then we see that $s_i$ ends in 
a component of $\partial S_*$ that is the boundary component of a cusp neighborhood.
In  this case so does $s_j$ for $j\ne i$ and vice versa. 

Now, $s_i$ determines $\partial \clo(C)\cap \Sigma_+$. 
We lift these to $\mathcal{H}$. 
The endpoints of a lift of $s_i$ determine the segments of form $\zeta(x), x\in\partial \Ss_+$ and determines  $\partial \clo(C) \cap {\mathfrak{A}}_i$ up to isotopy. 
Now, $C$ can be lifted to a crooked plane $\tilde C$ in $\Lspace$. 
Hence, we have a complete homotopy class of $(\clo(\tilde C), \partial \clo(\tilde C))$
in $(\mathcal{H}, \partial \mathcal{H})$. 
By removing appropriate subsets of $\clo(\tilde C)$ and projecting back to $M\cup \Sigma$, 
we have determined the homotopy class in 
$[(D, \partial D); (M \cup \Sigma, \Sigma_+ \cup T_{\hat E})]$. 
\end{proof}

\begin{proposition}\label{prop:LocFinite}  
The closure of the set of mutually disjoint embedded geodesics of form $s(P)$ for quasi-injective 
crooked planes must be  finite outside $E$ provided that 
$A$ has at most one crooked plane in each homotopy class.
\end{proposition}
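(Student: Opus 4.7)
The plan is to reduce the proposition to the classical finiteness of disjoint non-isotopic essential simple arcs on a compact surface, using the correspondence $P \mapsto s(P)$ as a dictionary between crooked planes and arcs in the compact core $\Sigma_*$.

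First, I fix the union of boundary neighborhoods $\hat E$ given by Corollary \ref{cor:bounded}. For each $P \in A$, Lemma \ref{lem:circlecrook} ensures that $s(P)$ is a non-closed embedded bi-infinite geodesic in $\Sigma_+$. Combining Corollary \ref{cor:bounded} with Lemma \ref{lem:noinfinite}, each end of $s(P)$ enters some component of $\hat E$ and remains there, terminating at a cusp or spiraling along a boundary-parallel simple closed geodesic. (A bi-infinite embedded geodesic cannot remain in the compact complement of $\hat E$, because closed embedded geodesic segments in a compact hyperbolic surface cannot have unbounded length.) Therefore $\alpha(P) := s(P)\cap \Sigma_*$ is a single compact embedded arc with both endpoints on $\partial \Sigma_*$.

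Next, the family $\{\alpha(P) : P \in A\}$ is pairwise disjoint (since the $s(P)$'s are) and consists of simple embedded arcs. By Corollary \ref{cor:homotopy}, the map $H_D \to H_s$ is injective, so distinct homotopy classes of crooked planes yield distinct homotopy classes of $s(P)$ rel.\ $\hat E$, which in turn are distinct rel-boundary isotopy classes of $\alpha(P)$ in $(\Sigma_*,\partial \Sigma_*)$. The hypothesis that $A$ contains at most one crooked plane per homotopy class therefore forces the arcs $\alpha(P)$, $P\in A$, to lie in pairwise distinct isotopy classes, and to be essential (non-isotopic into $\partial \Sigma_*$) for all but possibly one representative per boundary component. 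A classical fact of surface topology now applies: the number of pairwise disjoint, pairwise non-isotopic essential simple arcs in the compact surface $\Sigma_*$ with boundary is bounded in terms of $\chi(\Sigma_*)$ (since any maximal such arc system cuts $\Sigma_*$ into disks, giving a bound of order $-3\chi(\Sigma_*)$). Hence $|A|$ is finite.

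Finally, for the closure statement: by Lemma \ref{lem:no_separation} any limit leaf of $s(A)$ not already in $s(A)$ must be a simple closed geodesic parallel to $\partial \Sigma_+$, which lies in the hypercyclic part of $\hat E\setminus E$. Since there are only finitely many such boundary components and, by the above, only finitely many $s(P)$, the closure of $\bigcup_{P\in A} s(P)$ restricted to $\Sigma_+\setminus E$ is a finite union. The main obstacle I anticipate is the essentiality of $\alpha(P)$ in the corner case where both ends of $s(P)$ enter the same component of $\hat E$ and $\alpha(P)$ becomes boundary-parallel in $\Sigma_*$; this must be handled either by observing that at most finitely many such redundant arcs can appear (one per boundary component, since they would all be isotopic) or by showing directly, via Lemma \ref{lem:circlecrook} and the injectivity in Corollary \ref{cor:homotopy}, that distinct homotopy classes of crooked planes cannot all produce boundary-parallel $\alpha(P)$.
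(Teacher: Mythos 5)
Your argument is essentially correct and rests on the same supporting results as the paper (Corollary \ref{cor:bounded}, Lemma \ref{lem:noinfinite}, Corollary \ref{cor:homotopy}, and the one-plane-per-class hypothesis), but the finiteness mechanism at the end is genuinely different. The paper argues by contradiction: Lemma \ref{lem:noinfinite} forces a \emph{uniform length bound} on the arcs $s(C)\cap\Sigma_*$, bounded-length geodesic arcs in the compact surface fall into only finitely many classes of $[(I,\partial I);(\Sigma_*,\partial\Sigma_*)]$, and Corollary \ref{cor:homotopy} transfers this to finitely many classes of crooked planes. You instead use Lemma \ref{lem:noinfinite} only qualitatively (to get that each end of $s(P)$ terminates in $\hat E$), and then invoke the classical bound on a system of pairwise disjoint, pairwise non-isotopic essential arcs in a compact surface, which is of order $-3\chi(\Sigma_*)$. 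Your route buys an explicit Euler-characteristic bound and avoids the quantitative length estimate, but it leans essentially on the \emph{mutual disjointness} of the arcs, which the paper's count does not need once the lengths are bounded; conversely the paper's route would apply to any bounded-length family even without disjointness. Two small cautions: your parenthetical claim that an embedded bi-infinite geodesic cannot stay in the compact part because ``embedded geodesic segments cannot have unbounded length'' is false in general (minimal lamination leaves do exactly this) --- the correct justification is Lemma \ref{lem:noinfinite} itself, which you also cite, so nothing is lost; and for the closure statement the cleaner argument is simply that finitely many leaves, each of whose ends either enters $E$ or spirals onto one of the finitely many boundary-parallel closed geodesics, have closure adding only those finitely many closed leaves outside $E$, rather than an appeal to Lemma \ref{lem:no_separation}.
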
 
\begin{proof} 
Suppose not. Then we can take some compact set $K$ in $\Lspace$ and find 
$\mathcal{CR}(\Lspace)_K$ with infinitely many crooked planes. 
(See Section \ref{sub:genconj} for this choice of $K$.)


The lengths of $(\Ss_+ - \tilde E) \cap C$ for a quasi-injective crooked plane $C$ 
is uniformly bounded by Lemma \ref{lem:noinfinite}
since otherwise we will have some sequence of segments converging to an infinite length leaf in 
$\Sigma_\ast$. 
The homotopy classes $[s, \partial s; \Sigma_*,\partial \Sigma_*]$  when the 
lengths of $s$ are bounded. 
This determines the homotopy class of crooked planes in  
$[D, \partial D; M_E, \Sigma_E]$. 
This means that the homotopy 
classes realized by $C \in \mathcal{CR}(\Lspace)_K$ is finite. 

Since we have only one crooked plane in the homotopy classes,
we have the result. 
\end{proof} 



\begin{proposition}\label{prop:finitehomotopy}
Let $h_t: \Gamma \ra \Isom(\Lspace), t \in [0, 1]$ be a parameter of proper affine representations
whose image contains no parabolic elements  for $t> 0$.
Suppose that $M_t:= \Lspace/\Gamma_t$. Let $C_{i, t}, i=1, \dots, m$ denote 
the set of short crooked planes  cutting $M_t$ into a cell. 
Then there is $t_0, 0< t_0 < 1$ such that 
there is a lift crooked planes $\tilde C_{i, t}$, $0< t\leq t_0$ in $\Lspace$ 
with following properties: 
\begin{itemize} 
\item $\tilde C_{i, t} \ra \tilde C_{i, 0}$ for a short crooked plane $\tilde C_{i, 0}$, $i=1, \dots, m$ 
\item There is a homeomorphism $f_t: M_E \ra M_t$, $0 < t< t_0$ so that $f_t^{-1}(C_{i, t})$ has 
the constant homotopy classes in $(M, T_{\hat E})$  and agrees with that of $C_{i, 0}$.  
\end{itemize} 
\end{proposition}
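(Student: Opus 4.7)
The plan is to combine the deformation machinery from Section \ref{sec:deforming} with the Chabauty limiting analysis of Section \ref{sub:genconj} and the finiteness given by Proposition \ref{prop:LocFinite}. First I would invoke the construction from Sections \ref{sub:deformtori}--\ref{sub:Carriere}: for $t_0>0$ sufficiently small, the parametrized smooth isotopies $\tilde f_i(\mu)$ together with the convex-core argument of Carri\`ere produce projective homeomorphisms $f_t : M_E \to M_t \cup \Sigma_t \setminus T_{\hat E,t}^o$, defined for $0\le t\le t_0$, and these may be arranged to converge in $C^r$ to the identity as $t\to 0$. Pulling back the fundamental domain $F$ of $M_E$ then gives a continuously varying fundamental domain $F_t \subset \Lspace$ for $\Gamma_t$, uniformly contained in a compact set $K\subset\Lspace$.

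Next, for each $t\in(0,t_0]$ Danciger-Gu\'eritaud-Kassel \cite{DGK162} provides short crooked planes $C_{1,t},\dots,C_{n,t}$ (with $n=\rank\Gamma$) cutting $M_t$ into a cell. I would choose a lift $\tilde C_{i,t}$ meeting $F_t$ for each $i$. Since shortness forces $s(C_{i,t})$ to realize a shortest geodesic between boundary components of geodesic end-neighborhoods in $\Sigma_{+,t}$, its length is bounded by a uniform constant coming from the continuously varying hyperbolic geometry of $\Sigma_{+,t}$; this bound combined with Corollary \ref{cor:nodegeneracy} confines the origins $p_{i,t}$ and the space-like vectors $\vecu_{i,t}$ to a bounded subset of $\Lspace \times \Lspace$. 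Passing to a subsequence $t_k\to 0$, Chabauty limits $\tilde C_{i,0}$ exist for every $i$. The degenerate limits of Lemma \ref{lem:divcrook} are excluded exactly as in the argument following Proposition \ref{prop:genconj}: flat planar pieces are ruled out by Lemma \ref{lem:geolimdisk}, stems-with-one-wing limits by the lune-intersection argument, and homologically doubled half-planes by Lemma \ref{lem:homdouble} applied to $K_0$. Hence each $\tilde C_{i,0}$ is a genuine crooked plane, and since $s(\tilde C_{i,0})$ is the limit (in the unit tangent bundle) of shortest geodesics for the nearby hyperbolic surfaces, it is itself a shortest geodesic between boundary components, so $\tilde C_{i,0}$ is short.

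Finally, for the homotopy-class statement I would use Proposition \ref{prop:LocFinite} together with Corollary \ref{cor:homotopy}: over a fixed compact set $K$ only finitely many classes in $[(D,\partial D);(M\cup\Sigma,\Sigma\cup T_{\hat E})]$ are realized by quasi-injective crooked planes meeting $K$. Transporting $C_{i,t}$ back through $f_t^{-1}$ yields a family of immersed disks in the fixed manifold $M_E$ depending continuously on $t$; since the homotopy invariant takes values in a discrete set and the family is continuous, the class of $f_t^{-1}(C_{i,t})$ is eventually locally constant as $t\to 0$ and, by the injection of Corollary \ref{cor:homotopy} on the level of $s(\cdot)$, must coincide with the class of $C_{i,0}$. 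Shrinking $t_0$ once more gives the conclusion. The main obstacle is the third step, namely guaranteeing that the Chabauty limit is short and, simultaneously, that the post-hoc indexing $\tilde C_{i,t}$ matches $\tilde C_{i,0}$ for every $i$; this requires the uniform length bound on $s(C_{i,t})$ from the continuous deformation of hyperbolic geometry together with a diagonal argument to choose a single subsequence working for all $n$ indices and to rule out two distinct $C_{i,t},C_{j,t}$ collapsing onto a common limit, which is prevented because the cell-decomposition property and Lemma \ref{lem:no_separation} force distinct limit classes.
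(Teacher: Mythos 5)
Your proposal follows essentially the same route as the paper: a uniform bound on the length of $s(C_{i,t})$ outside the boundary neighborhoods, Chabauty convergence of chosen lifts to genuine (short) crooked planes with the degenerate limits of Lemma \ref{lem:divcrook} excluded as in Section \ref{sub:genconj}, and then the finiteness of realizable homotopy classes via Proposition \ref{prop:LocFinite} and Corollary \ref{cor:homotopy}. Two remarks. First, you obtain the length bound directly from shortness and the continuous variation of the hyperbolic structures, whereas the paper argues by contradiction: an unbounded subsequence of lengths would produce a Chabauty limit with an infinitely long leaf $s(C)$ in $\Sigma_\ast$, contradicting Lemma \ref{lem:noinfinite}; both are workable, though your version needs a word on why the shortest arcs between the (degenerating) end neighborhoods stay bounded \emph{outside} $\hat E_t$ as a boundary geodesic pinches to a cusp. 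Second, your final step leans on the assertion that $t\mapsto f_t^{-1}(C_{i,t})$ is a continuous family, but the paper explicitly notes that the $C_{j,t}$ are chosen \emph{without} regard to continuity in $t$, so ``continuity plus discreteness of the invariant'' is not available as stated; you should instead rely entirely on the pigeonhole consequence of the finiteness you already established (boundedly many homotopy classes of arcs of bounded length ending in $\hat E_t$, each realized by at most one crooked plane in the collection), which is what the paper does. With that substitution the argument matches the paper's.
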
 
\begin{proof} 
For any $t$, $0 < t < 1$, 
we choose  the union of cusp neighborhoods or the boundary neighborhood $E_t$ in $\Sigma_t$ for each $t$ by Corollary \ref{cor:bounded}. 
Denote by $\tilde \partial E_t$ the inverse image of $\partial E_t$ in $\Ss_+$. 
We may assume that $\tilde \partial E_t \ra \tilde \partial E_{t_0}$ whenever $t \ra t_0$. 
Define $\Sigma_{*, t} := \Sigma_t \setminus E_t$. 
$s_{i, t} := \clo(C_{i, t}) \cap \Sigma_{*, t}$ must have uniformly bounded lengths outside $E$
since otherwise we can obtain a subsequence of crooked planes in $\Lspace$ Chabauty converging  to one $C$ at $t = 0$ with infinite $s(C)$.  
The limiting lamination again divides $\Sigma_t$ into cells. 
This contradicts Lemma \ref{lem:noinfinite}. 

Note that the bound on the lengths outside $E_t$ will bound the number of possible homotopy classes of segments ending at $E_t$. 
Since the homotopy classes are then fixed by Proposition \ref{prop:LocFinite}
for $0< t< t_0$ for some $0< t_0$, 
we proved the first and the second items. 

\end{proof}

\begin{proof}[Proof of Corollary \ref{cor:main}] 
Now, $\partial C_{i, t}$, $i=1, \dots, n$, cut $\Sigma$ into a sphere with holes since $C_{i, t}$
cut $M_t$ into a cell. 

The quasi-disjointness and quasi-injectivity of $C_j$, $j=1, \dots, m$ follow by limit arguments with their lifts in $\Lspace$ since $C_{j, t}$ are properly embedded and are disjoint for $j=1, \dots, ,m$.
We may assume that $C_{i, t}$ is short so that $s(C_{i, t})$ are shortest geodesic between 
geodesic end neighborhoods $\Ss_+/\Gamma$ and some choices of 
horocyclic neighborhoods. Hence, 
the limit crooked planes are also short by geometric convergence. 

By Propositions \ref{prop:LocFinite} and  \ref{prop:genconj}, 
we have finitely many crooked planes $C_1, \dots, C_m$ with properties of the conclusions of 
Proposition \ref{prop:finitehomotopy}. These are quasi-disjoint.

We perturb $C_1, \dots, C_m$ so that they meet one another transversally. 
We can use Dehn's lemma and exchanging disks to obtain 
embedded disks isotopic to $C_1, \dots, C_m$ with the identical boundary 
to $\partial C_1, \dots, \partial C_m$ respectively so that 
the complement is a cell:  
By the second item of Proposition \ref{prop:finitehomotopy}, there are only 
finitely many changes of homotopy classes of $C_{i, t}$ and $\partial C_{i, t}$
for $i=1, \dots, m$. Hence, we can perturb the limiting quasi-injective quasi-disjoint 
crooked planes to obtain perturbed disks 
the complement of whose union is a $3$-cell. 

Note that the lengths outside $\hat E$ of $s(C_j)$ are all finite. 
Since $s(C_j)$ has to cut $\Sigma$ into a union of cells, no two of them can be identical. 
The normal vectors to $C_j$ are never parallel since 
$s(C_j)$ are mutually distinct.

Let $C_j$ have orientation induced from the perturbed disks which are given the boundary orientation from the cell. Then $s(C_i)$ also has the induced boundary orientation. 
We denote by $\vecu_i^+, \vecu_i^-$ the endpoints of $s(C_i)$ following the orientation. 

Also, $s(C_j), j=1, \dots, m$ is a boundary of a cell $C_\Sigma$ in $\Ss_+$ since their images 
also cut $\Sigma$ into a cell.

Since they are disjoint in $\Ss_+$ and hence on $\Ss_-$ and on $\Ss_0$, the arcs may coincide. 

It is possible that two wings of some lifting crooked planes $\hat C_j$ and 
$\hat C_{j'}'$ in $\Lspace$ of  $C_j$ and $C_{j'}$ 
may meet on an open subset  of the wings.
The wings are bounded by a affine line in the direction $p$ of $\hat C_j$ and $\hat C_{j'}'$  where $p$ is the common endpoint of $s(C_j)$ and $s(C_{j'})$. 
Then one wing $W_j$  must be a subset of the other $W_{j'}$.  
If it is a proper subset, then the image of one of two stem cones of $\hat C_j$ must intersect transversally with $W_{j'}^{\prime o}$. This contradicts the quasi-disjointness. 
We may still have $W_j = W_{j'}'$. 

We will now show that this happens for only cusp-hanging end point of 
the crooked planes. 

If the null direction of a lift of $W_j$ 
is not a parabolic fixed point, then it must end at an arc $\alpha$ in 
$\partial \Ss_+$.

For $i \ne j$,  $s(C_{i_1, t})$ and $s(C_{i_2, t})$ are in different homotopy classes in
$(\Sigma_+, \hat E)$. Removing all geodesic end-neighborhood from $\Ss_+$, 
these geodesics will end at boundary parallel geodesics or at a cusp. 

These $s(C_{i_1, t})$ and $s(C_{i_2, t})$ will converge to
geodesics that are cusp hanging or not in an end. 
If the limit is not cusp hanging at an end, then they meet the boundary parallel geodesics at 
different point since they are the shortest geodesics between these geodesics and the
horocyclic neighborhoods. 
Hence, their noncusp ending endpoints must be distinct at the boundary parallel geodesics. 

So their wings cannot coincide as above. 
So the wing coinciding happens only for cusp hanging ends. 

Finally the closure of each component of the complement of the crooked disks are contractible since they are deformed from cells as $t\ra0$ and they are retracts of contractible subspaces. These follow since the homotopy classes change only finitely many times.



\end{proof}

\appendix 

\section{The metric space of crooked planes and its completions} \label{app:metricspace}


We introduced a topology on the set of crooked planes on $\Lspace$. 
The closure of each crooked plane meets $\Ss_+$ in a complete geodesic. 
There is an origin of the crooked plane. 
The space is parameterized by the space of unoriented geodesics on $\Ss_+$ 
congruent to $(\partial \Ss_+\times \partial \Ss_+ - \Delta)/\bZ_2$. 

Recall that $\mathcal{CR}(\Lspace)$ is the space of crooked planes with the Chabauty topology.
The space is in one-to-one correspondence with 
$\mathcal{C}(\Ss_+) \times \Lspace$ since the origin of the crooked planes and the geodesic in 
$\Ss_+$ determines the crooked plane. 

For a compact subset $K$ of $\Lspace$, we denote by 
$\mathcal{CR}(\Lspace)_K$ the subset of crooked planes meeting $K$. 

A {\em null strip} is the closed subset of a null plane bounded by two parallel null lines. 
Also, given a crooked plane, we may label its cones $C^e$ for $e=+, -$ according to whether it is positive one or not, assign its boundary null-lines $l^e$ for $l^e, -$
 arbitrarily and  the wings $W^e$ adjacent to $l_e$. 

\begin{lemma} \label{lem:stemswings} 
Given a sequence of crooked planes $D_i$, we can extract a corresponding sequence of 
wings $W^e_i$ for $e=+, -$, the corresponding sequences of cones $C^e_i$ for $e=+, -$,
the sequence $l^e_i$ for $e=+, -$ where $l^e_i$ is the segment bounding $W^e_i$ 
and the sequence of origins $p_i$. 
 We can choose a subsequence for $\{D_i\}$ so that 
the following hold\/{\em :} 
\begin{description} 
\item[Wing limit] $\{W^e_i\}$ for each $e=+, -$  converges to 
a strongly null-half planes or a null-half plane or the empty set.
\item[Cone limit]   $\{C^e_i\}$   for each $e=+, -$  converges to a cone, a null-line, a half-null line, a null strip, 
a half-time like plane bounded by  a null-line, a strongly null-half plane, 
a null plane, a time-like plane, or the empty set. 
\item[Line limit] $\{l^e_i\}$  for each $e=+, -$  converges to a null line or the empty set. 
\item[Origin limit] $\{p_i\}$ converges to a point or the empty set.
\end{description}  
In the cone limit null-strip case, the sequence of one of the cones converges to a null strip and the sequence of other ones converges to the empty set. 
\end{lemma}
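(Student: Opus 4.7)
The plan is to exploit the compactness and metrizability with countable basis of the Chabauty topology on $C(\Lspace)$ recalled in Section~\ref{sub:Chabauty} and diagonalize: since the seven closed subsets $W^e_i, C^e_i, l^e_i$ for $e=+,-$ and the origin $p_i$ all lie in the compact space $C(\Lspace)$, I extract a single subsequence along which each of them Chabauty-converges. The origin and line conclusions are then routine: a sequence of singletons either converges to a point or to the empty set, while a null line is determined by a basepoint together with a null direction (which can be normalized to lie on the compact intersection of the unit sphere with the light cone), so after a further subsequence either both converge (limit a null line) or the basepoint modulo the null direction escapes (limit empty), by Lemma~\ref{lem:convS}.

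For the wing $W^e_i$, bounded by the null line $l^e_i$, I fix the null direction $\vecv^e_i \to \vecv^e_\infty$ as above. Inside the two-dimensional null plane $(\vecv^e_i)^\perp$ translated by a basepoint, I decompose the basepoint into a component along the degenerate direction $\bR\vecv^e_i$, which merely slides $l^e_i$ along itself, and a transverse component. If the transverse component stays bounded, the limit is a strongly null half-plane bounded by the limiting null line; if it diverges into the half-plane, the region sweeps out a larger half, or all, of the limiting null plane (interpreted in the statement as a null half-plane); if it diverges away from the half-plane, the wing eventually leaves every compact set in $\Lspace$ and the limit is empty.

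The bulk of the work, and the chief obstacle, is the cone limit, which I would stratify by the limiting behaviour of the unit space-like vector $\vecu_i$ determining the crooked plane. After extracting a subsequence, either $\vecu_i \to \vecu_\infty$ space-like, or $\vecu_i$ degenerates projectively to a null ray. In the space-like case, $\vecu_i^\perp$ converges to a time-like plane; depending on whether the origin and the two bounding null rays $l^{\pm}_i$ stay in a bounded region, one obtains in turn a cone (everything stays), a half-time-like plane bounded by a null line (one bounding null ray stays finite while the opposite side opens up), a full time-like plane (both sides open up), a null line or half-null line (angular collapse of the cone), or the empty set. In the null-degeneration case, the time-like plane $\vecu_i^\perp$ opens into the null plane $\vecu_\infty^\perp$, and the pair of bounding null rays of $C^e_i$, which have been collapsing towards the common null direction, limit either to a single null line giving a strongly null half-plane or a null plane, or to two distinct parallel null lines bounding a null strip.

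The compatibility claim at the end of the lemma follows because the two parallel null lines producing the strip are the Chabauty limits of the two bounding null rays of a \emph{single} cone $C^e_i$; the corresponding rays of the opposite cone $C^{-e}_i$ are then forced to diverge to infinity within the collapsing family of time-like planes, so $C^{-e}_i$ limits to the empty set. The main difficulty throughout is ensuring exhaustiveness in this case-split and excluding spurious limit shapes; for this I plan to use Lemma~\ref{lem:conv} to track the limits of convex domains bounded by a uniformly bounded number of geodesics, together with the pointwise characterization of Chabauty convergence from Section~\ref{sub:Chabauty}, to verify that any limit of a two-dimensional convex region bounded by two null rays is a convex set of dimension at most two in $\Lspace$ whose boundary is built from the already-classified limits of those rays, and hence falls into one of the listed classes.
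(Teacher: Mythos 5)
Your proposal is correct and follows essentially the same route as the paper: extract a Chabauty-convergent subsequence of every constituent piece, use Lemma \ref{lem:convS} to fix the limits of the containing planes (your stratification by the limit of the space-like vector $\vecu_i$ is equivalent to the paper's stratification by whether the plane containing the stem limits to a time-like or a null plane), and then case-split on the limits of the bounding null lines and the origin via Lemma \ref{lem:conv}. The only differences are organizational — you list the null-line/half-null-line degeneration under the space-like stratum, where it cannot occur since the two null directions stay distinct there, and you make explicit the final null-strip assertion that the paper leaves implicit — and neither affects correctness.
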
 
\begin{proof} 
We first extract the subsequence 
where the corresponding sequence of origins and the corresponding lines converge, and 
We form sequences of planes from the sequence of stems and the sequence of 
corresponding wings with same $e$s. 
We assume that each of these sequences
converges to a time-like or a null-like plane or the empty set
by Lemma \ref{lem:convS}. 

The last two  items are clear. 

For the cone case, 
we consider the sequences of the origins and the bounding lines. 
We use Lemma \ref{lem:conv} now: 
If these sequences converge to the empty set, the limit is either the time-like plane or 
the null-plane or the empty set correspondingly. 
If only one of $\{l^e_i\}$ converges to a null-line, then the limit is either a time-like plane bounded by the null-line or a null-plane bounded by the null-line. 
If both of $\{l^e_i\}$ converge to a common null-line, then the limit is a null-line or a half-null line dpending on whether $\{p_i\}$ converges or not.  
If both of $\{l^e_i\}$ converge to  parallel but distinct null-lines respectively, 
then the limit is a null strip. 
If both of $\{l^e_i\}$ converge to non-parallel but distinct null-lines respectively, 
then the limit is a cone.  

In the wing case, if the sequence of bounding lines converges to the empty set,
then the limit is a null-plane. If the sequence of bounding lines converges to a null-line, then 
the limit is a strongly null-half plane. 
\end{proof}


Any embedded disk $D$ in $\clo(\Lspace)$ has a $2$-cycle representing 
the generator of $H_2(D, \partial D; \bZ_2)$. 
When we have  a $2$-cycle $z$ in $\clo(\Lspace)$ on a set $L$ in $\clo(\Lspace$) and $x \in L$, 
we can consider the map 
$H_2(L, L \cap \partial \Lspace; \bZ_2) \ra H_2(L, L-\{x\};\bZ_2)$  induced by $L \ra (L, L-\{x\})$.
We say that $z$ homologically double-covers a subset $A \subset L$ if for almost all points of 
$x\in A$, the image of $z$ in $H_2(L, L-\{x\};\bZ_2)$ is $2$ times a generator. 
%
In the third case,  the pair $(L, z)$ is called the {\em doubled null-plane}.

Notice that except for the fourth case, each set separates $\Lspace$ into two components. 
For the proof of the following Lemma, we will use the fact: 

\begin{lemma}\label{lem:finite}
 If we have a sequence of 
closed set $L_i$ that is a union of finitely many convex sets $L_{i, j}, j=1, \dots, m$,
and each $L_{i, j} \ra L_{\infty, j}$ for each $j$. Then 
$L_i \ra \bigcup_{j=1}^m L_{\infty, j}$. 
\end{lemma}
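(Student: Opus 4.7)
The plan is to apply the sequential characterization of Chabauty convergence recalled in Section \ref{sub:Chabauty} (from Lemma E.1.2 of Benedetti--Petronio), which reduces the claim to verifying two conditions on the candidate limit $L_\infty := \bigcup_{j=1}^m L_{\infty, j}$: (a) every subsequential Euclidean limit point of sequences $\{x_k\}$ with $x_k \in L_{i_k}$ lies in $L_\infty$, and (b) every point of $L_\infty$ is the limit of some sequence $x_i \in L_i$.

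For condition (a), I would argue as follows. Given $x_k \in L_{i_k}$ with $x_k \to x$, each $x_k$ lies in at least one of the pieces, so choose an index $j_k \in \{1, \dots, m\}$ with $x_k \in L_{i_k, j_k}$. Since the index set is finite, pass to a further subsequence along which $j_k$ takes a constant value $j$. Then $x_k \in L_{i_k, j}$ with $x_k \to x$, and the assumed Chabauty convergence $L_{i, j} \to L_{\infty, j}$ immediately yields $x \in L_{\infty, j} \subset L_\infty$. For condition (b), if $x \in L_\infty$, pick $j$ with $x \in L_{\infty, j}$, and use the Chabauty convergence of $L_{i, j}$ to $L_{\infty, j}$ to produce $x_i \in L_{i, j} \subset L_i$ with $x_i \to x$.

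There is essentially no obstacle here: the finiteness of $m$ is precisely what makes the pigeonhole step in (a) go through, and convexity of the pieces plays no role in the argument (only the Chabauty convergence hypothesis on each sequence $\{L_{i, j}\}_i$ is used). The statement would in fact fail for unions indexed by an unbounded number of pieces, since mass could escape in infinitely many directions; here this is ruled out from the start by the uniform bound $m$ on the number of summands.
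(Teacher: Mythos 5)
Your proof is correct and follows essentially the same route as the paper, which simply cites Lemma E.1.2 of Benedetti--Petronio; you have filled in the two-condition verification (with the pigeonhole step on the finitely many indices $j$) that the paper leaves implicit. Your observation that convexity of the pieces is not needed, only the finiteness of $m$, is accurate.
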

\begin{proof}
Again, this is a simple consequence of 
Lemma  E.1.2 of \cite{BP92}.
\end{proof} 

We extend Lemma \ref{lem:stemswings}: 
\begin{lemma}\label{lem:divcrook}
Given any sequence of elements of  
$\mathcal{CR}(\Lspace)_K$, we can find a subsequence
so that its corresponding sequence of wings and cones converge and the sequence itself 
 converging to one of the following
in the Chabauty topology:
\begin{itemize} 
\item A complete time-like plane meeting $K$. 
\item  A complete null plane meeting $K$
\item A strongly null half-plane meeting $K$.
\item  A time-like half-plane union with a strongly null half-plane meeting meeting on a null line. 
\item A crooked plane.
\end{itemize} 
In the third case, we give a $2$-cycle representing the generator of 
$H_2(\clo(C), \partial C;\bZ_2)$ for each crooked plane. 
The convergences yield that the corresponding subsequence of $2$-cycles representing 
the closure of the crooked planes in $\clo(\Lspace)$ 
converges to a the $2$-cycle that homologically double covers the 
strongly null half-plane and represent a zero-class in 
$H_2(\clo(N) \cup \partial \mathcal{H}, \partial \mathcal{H};\bZ)$.
\end{lemma}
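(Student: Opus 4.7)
The plan is to apply Lemma~\ref{lem:stemswings} to any sequence $\{D_i\}$ in $\mathcal{CR}(\Lspace)_K$, extracting a subsequence for which the origins $p_i$, the unit space-like vectors $\vecu_i$ (inside the unit sphere), the bounding null lines $l_i^{\pm}$, the cones $C_i^{\pm}$ and the wings $W_i^{\pm}$ all converge in the Chabauty topology. Since $D_i = C_i^+ \cup C_i^- \cup W_i^+ \cup W_i^-$ is a finite union of convex pieces of uniformly bounded combinatorial complexity, Lemma~\ref{lem:finite} then gives Chabauty convergence $D_i \to D_\infty$, where $D_\infty$ is the union of the individual limits. The assumption $D_i \cap K \neq \emptyset$ together with compactness of $K$ ensures that at least one piece of $D_\infty$ is nonempty and meets $K$.

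The heart of the argument is a case analysis on the limit $\vecu_\infty$ of $\vecu_i$ together with the behavior of $p_i$. When $\vecu_\infty$ is space-like and $p_i \to p_\infty \in \Lspace$, each of the four pieces converges to the corresponding piece of $\mathcal{C}(p_\infty,\vecu_\infty)$ and $D_\infty$ is a crooked plane. When $\vecu_\infty$ is null, $\vecu_i^+$ and $\vecu_i^-$ both tend to a common null direction, so the two null planes $P(\vecu_i^\pm)$ carrying the wings, and the two null lines $l_i^{\pm}$, coalesce; using the consistent positive-orientation convention built into the definition of a wing, the two wings $W_i^{\pm}$ converge to a common strongly null half-plane, while the stem cones degenerate to subsets of the same null plane. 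According to whether $p_i$ stays in a bounded region or escapes (and in which direction), the limit $D_\infty$ is either a strongly null half-plane, a complete null plane, or the union of a time-like half-plane with a strongly null half-plane meeting on a null line---this last possibility arising precisely when one cone of the stem persists as a time-like half-plane while the opposite cone exits $K$ together with half of the configuration. When $\vecu_\infty$ is space-like but $p_i$ escapes along $\vecu_\infty^{\perp}$, the stem plane $p_\infty + \vecu_\infty^{\perp}$ persists as a complete time-like plane. Running through the full list of combinations from Lemma~\ref{lem:stemswings} and eliminating the null-strip limit as incompatible with the stem-cone incidence structure of a single crooked plane then yields exactly the five possibilities listed.

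For the homological assertion in the third case, I take $\bZ_2$-cycles $z_i$ representing the generator of $H_2(\clo(D_i),\partial D_i;\bZ_2)$, assembled from the four piecewise-totally-geodesic components with their natural triangulations. When $\vecu_i \to \vecu_\infty$ is null, the two wings $W_i^{\pm}$ lie in null planes collapsing to a common null plane, and by the consistent orientation they induce the same strongly null half-plane limit; their two fundamental chains therefore add to twice the fundamental chain of $D_\infty$, while the stem cones shrink to a null arc contained in $\partial D_\infty$. Hence the limiting $2$-cycle homologically double-covers $D_\infty$ in the sense of the definition preceding the lemma, and it represents the zero class in $H_2(\clo(N) \cup \partial\mathcal{H},\partial\mathcal{H};\bZ)$ because the double cover of a half-plane with boundary in $\partial\mathcal{H}$ cobounds after pushing one of the two sheets slightly off.

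The main obstacle is the case analysis in the second paragraph, specifically ruling out the degeneracies permitted by Lemma~\ref{lem:stemswings} but forbidden by the rigid incidence relations among the two cones, two wings, and single origin of a crooked plane---most notably the null-strip cone limit and mismatched wing/cone limits. I would handle this by fixing adopted frames for the limiting null direction and writing the four pieces in explicit coordinates, so that each combinatorial degeneration of $\vecu_i$ produces one of the five listed limits and no other.
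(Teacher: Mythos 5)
Your overall strategy is the paper's: pass to a subsequence via Lemma~\ref{lem:stemswings} so that the two wings, the two stem cones, the bounding null lines and the origins all converge; use Lemma~\ref{lem:finite} to conclude that the crooked planes Chabauty-converge to the union of the piecewise limits; classify the possible unions by an exhaustive case analysis; and settle the homological claim by noting that a strongly null half-plane limit is always covered twice by overlapping limits of pieces (forced by the orientation convention), hence is a doubled, null-homologous cycle. Up to this point you and the paper agree.

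The gap is in the case analysis itself, which is the entire content of the lemma, and your organizing trichotomy --- causal type of $\vecu_\infty$ combined with ``$p_i$ bounded or escaping along $\vecu_\infty^\perp$'' --- misplaces several of the limit types and misses cases. (i) The fourth limit type (a time-like half-plane union a strongly null half-plane along a null line) cannot occur when $\vecu_\infty$ is null: wings lie in null planes and null planes Chabauty-converge only to null planes, so a time-like half-plane in the limit must come from the stem, forcing $p_i+\vecu_i^\perp$ to converge to a time-like plane and hence $\vecu_\infty$ to be space-like. This type arises instead when $\vecu_i\equiv\vecu$ is space-like and $p_i=p_0+t_i\vecu^{+}$ escapes along a null direction of $\vecu^\perp$: translation by $t_i\vecu^{+}$ fixes the wing $p_0+W(\vecu^{+})$ pointwise, sends one stem cone to the time-like half-plane of $\vecu^\perp$ bounded by $p_0+\bR\vecu^{+}$, and pushes the other cone and the other wing out of every compact set. (ii) Consequently your claim that ``$\vecu_\infty$ space-like and $p_i$ escaping along $\vecu_\infty^\perp$'' produces only a complete time-like plane is false; that sub-case produces the complete time-like plane only for time-like escape directions and produces the fourth type for null ones. (iii) You omit the case where $\vecu_\infty$ is space-like but $p_i$ escapes transversally to $\vecu_\infty^\perp$ (e.g.\ along $\pm\vecu_\infty$), in which the stem exits but a wing, whose carrying null plane $P(\vecu_\infty^{\pm})$ contains $\vecu_\infty$ and hence stays put, can swell to a complete null plane --- so null-plane limits are not confined to $\vecu_\infty$ null either. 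The paper sidesteps all of this by indexing the cases not by $\vecu_\infty$ but by which of the carrying planes of the cones, wings, lines and origins converge versus exit every compact set (its conditions $(c)_{(k)},(w)_{(k)},(l)_{(k)},(p)_{(k)}$), which is the invariant that actually determines the limit. Your proposal to redo everything in adopted-frame coordinates could recover the correct list, but as sketched the classification would come out wrong.
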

\begin{proof}
Given any sequence of elements $C_i$  of $\mathcal{CR}(\Lspace)_K$, 
either a wing $W^e_i$, $e=+, -$ or a cone $C^e_i$, $e=+, -$ 
of $C_i$ meets $K$.  Let $l^e_i, e = +, -$ be the two null lines of $W_i$ 
bounding the two cones. 
Let $p_i$ denote the intersection point of $l^+_i$ and $l^-_i$. 

We assume that the sequences of vector spaces parallel to $W_i, C^e_i, l^e_i$ 
are convergent by choosing subsequences by Lemma \ref{lem:stemswings}.  


We define the following cases: 
\begin{itemize} 
\item[$(p)_{(1)}$] $p_i$ is converging to a point of $\Lspace$. 
\item[$(p)_{(3)}$] $p_i$ exits any compact subset of $K$. 
\item[$(l)_{(1)}$] $l^e_i$ converges to a null-line $l^e_\infty$ for one of 
$e= +, -$,  and the sequence for the other $e$ 
exits any compact subset of $\Lspace$ 
\item[$(l)_{(2)}$]  $l^e_i$ converges to a null-line $l^e_\infty$ for two of 
$e= +, -$. 
\item[$(l)_{(3)}$] $l^e_i$ for both $e=+, -$ 
becomes disjoint from any compact subset of $\Lspace$ 
for sufficiently large $i$. 
\item[$(w)_{(1)}$] The sequence of null-planes containing $W^e_i$ converges 
to a null-plane in $\Lspace$ for one of $e=+, -$. 
For other $e$, the sequence of the null-planes exits any compact set. 
\item[$(w)_{(2)}$] The sequence of null-planes containing $W^e_i$ converges 
to a null-plane in $\Lspace$ for both $e=+, -$. 
\item[$(w)_{(3)}$] The sequence of null-planes containing $W^e_i$ exits any
compact subset in $\Lspace$ for both $e=+, -$. 
\item[$(c)_{(1)}$] The sequence of time-like planes containing $C^e_i$ converges 
to a time-like or a null plane in $\Lspace$ for both of $e=+, -$. 
\item[$(c)_{(3)}$] The sequence of time-like planes containing $C^e_i$ exits any
compact subsets in $\Lspace$ for both $e=+, -$. 
\end{itemize}
Note that 
\begin{itemize} 
\item $(l)_{(2)}, (w)_{(2)}$ and  $(c)_{(2)}$ are weaker than  $(p)_{(1)} $, 
\item  $(w)_{(k)}$ and $(c)_{(k)}$ are weaker than $(l)_{(k)}$ for each $k= 1, 2$.
\end{itemize} 
and hence some choices do not occur.

By choose subsequences, we can assume of a choice of one element from
each of  $\{(c)_{(3)}, (c)_{(1)}\}$, $\{(w)_{(3)}, (w)_{(2)}, (w)_{(1)}\}$, $\{(l)_{(3)}, (l)_{(2)}, (l)_{(1)}\}$, 
and $\{(p)_{(3)}, (p)_{(1)}\}$.  These are all possibilities by Lemma \ref{lem:stemswings}. 
We will list the possibilities in the order given here.


We list the possibilities of the Chabauty limits 
where the list in an item ends if the remaining possibilities are determined. 
We extract the subsequence so that 
corresponding sequences of four polygonal regions consisting of the two wings and two cones of the stems converges or leaves every compact subset respectively. 
The Chabauty limit  of each corresponding  sequence of convex polygons is always a convex $1$- or $2$-dimensional domains. 
These follow by Lemmas \ref{lem:finite}, \ref{lem:conv} and \ref{lem:convS}. 
\begin{description} 
\item[$(c)_{(3)}, (w)_{(3)}$] This contradicts the sequence being in 
$\mathcal{C}_K$. 
\item[$(c)_{(3)}, (w)_{(2)}$] Since the time-like plane separates $\Lspace$ into two 
components, we cannot have two sequences of closures of the wings  convergent.
\item[$(c)_{(3)}, (w)_{(1)}$] In this case,  it converges to a null plane where 
a sequence of one of the corresponding wings will converge to the same null plane
and the corresponding sequences of cones and the other corresponding wings will exit any compact sets. 

\item[$(c)_{(1)}, (w)_{(3)}$] The possibilities  are a null-plane
or a time-like plane. This follows since the sequences of the wings exits any compact set and the sequence of one of the cones becomes larger and larger with the 
sequence of their  boundary leaving every compact subset, 
and the sequence of the other cones leaving every compact subset.  
And $(l)_{(3)}, (p)_{(3)}$ follow.
\item[$(c)_{(1)}, (w)_{(2)}$]  $ $ 
\begin{itemize} 
\item One possibility of the limit is a crooked plane where every sequence of 
the corresponding polygonal regions converges. 
\item The other possibility is a strongly 
null half-plane, where $l^+_i$ and $l^-_i$ converge to the null lines 
$l^+$ and $l^-$ in the same direction, and 
the corresponding 
sequences of the wings are converging to one or two strongly null half-planes, and 
\begin{itemize} 
\item the sequence of one of the cones converges 
to the null strip bounded by $l^+$ and $l^-$, and the sequence of other ones leaves every compact subset, 
or 
\item the sequence of the union of cones converges to the null line itself whenever $l^+$ and 
$l^-$ are equal. 
\end{itemize} 
(Note there that the strongly null-half spaces must overlap at the limit by the orientation considerations. That is, these limits do not form a complete null plane.)
\end{itemize} 
\item[$(c)_{(1)}, (w)_{(1)}$]  $ $
\begin{itemize} 
\item One possibility is 
the union of a strongly null half-plane and a time-like half-plane 
meeting at a null line with $(l)_{(1)}$ occurring. 
Here, the sequence of one corresponding  wings converges to a strongly null half-plane,
the sequence of the other corresponding wings leaves any compact subset, 
the sequence of one of the cones converges to a time-like  half-plane, 
and the sequence of the other cones leaves every compact subset. 
\item The other possibility is a strongly null half-plane, where  
the sequence of one corresponding wings is converging to the strongly null half-plane, 
the sequence of other corresponding wings is leaving every compact subset, 
the sequence of one of the cones converges to the same strongly null half-plane, and the sequence of the other cones leaves every compact set.
\end{itemize} 
\end{description}


Strongly null half-planes as limits 
are obtained in the second case of $(c)_{(1)}, (w)_{(2)}$ and the second case of $(c)_{(1)}, (w)_{(1)}$. 
In these cases, the limits are doubled ones. 
We can choose a sequence of finite cycles with bounded number of simplices so that 
it converges to a desired cycle $z$.
Since there are no other cases where the limit is a strongly null half planes,
this proves the final part. 
\end{proof}

\bibliographystyle{plain} 
\bibliography{cdg}

\end{document}